\newtheorem{theorem}{Theorem}[section]
\newtheorem{definition}[theorem]{Definition}
\newtheorem{lemma}[theorem]{Lemma}
\newtheorem{corollary}[theorem]{Corollary}
\newtheorem{algorithm}[theorem]{Algorithm}
\newtheorem{example}[theorem]{Example}
\newcommand{\prooflabel}{Proof}
\newcommand{\qed}{\nolinebreak\hfill\rule[-0.5mm]{1.5mm}{3.0mm}}
\newtheorem{proofthm}{\prooflabel}
\newenvironment{proof}{\begin{proofthm}}{\qed \end{proofthm}}
\newcommand{\chiqo}{\chi'}
\newcommand{\cobdry}{\delta}
\newcommand{\dimfix}{di\-men\-sion\-al}
\newcommand{\dualtri}{\mathcal{D}}
\newcommand{\efffix}{0-effi\-cient}
\newcommand{\extmap}{\varepsilon}
\newcommand{\jrep}[1]{\mathbf{j}(#1)}
\newcommand{\krep}[1]{\mathbf{k}(#1)}
\newcommand{\projmap}{\pi}
\newcommand{\projqo}{\mathcal{P}_\mathit{QO}}
\newcommand{\qrep}[1]{\mathbf{q}(#1)}
\newcommand{\quadfix}{quad\-ri\-lat\-eral}
\newcommand{\Quadfix}{Quad\-ri\-lat\-eral}
\newcommand{\QuadOct}{{\Quadfix}-Octa\-gon}
\newcommand{\Quadoct}{{\Quadfix}-octa\-gon}
\newcommand{\quadoct}{{\quadfix}-octa\-gon}
\newcommand{\R}{\mathbb{R}}
\newcommand{\regina}{\emph{Regina}}
\newcommand{\tri}{\mathcal{T}}
\newcommand{\trisphere}{\mathcal{S}}
\newcommand{\vrep}[1]{\mathbf{v}(#1)}
\newcommand{\Z}{\mathbb{Z}}
\newlength{\displaytextwidth}
\newlength{\displaytextindent}
\newcommand{\displaytext}[2]
    {\par\medskip\par
     \setlength{\displaytextwidth}{\linewidth}
     \addtolength{\displaytextwidth}{-\displaytextindent}
     \addtolength{\displaytextwidth}{-\displaytextindent}
     \noindent\hspace{\displaytextindent}\parbox[b]{\displaytextwidth}{%
        \textbf{#1: } \textit{#2}}%
     \par\medskip\par}
\title{Quadrilateral-Octagon Coordinates for \\ Almost Normal Surfaces}
\author{Benjamin A.~Burton}
\date{August 24, 2009}
\begin{document}

\maketitle

\abstract{Normal and almost normal surfaces are essential tools
for algorithmic 3-manifold topology, but to use them requires
exponentially slow enumeration algorithms in a high-dimensional vector space.
The quadrilateral coordinates of Tollefson alleviate this problem
considerably for normal surfaces, by reducing the dimension of this
vector space from $7n$ to $3n$ (where $n$ is the complexity of the
underlying triangulation).  Here we develop an analogous
theory for octagonal almost normal surfaces, using quadrilateral and octagon
coordinates to reduce this dimension from $10n$ to $6n$.
As an application, we show that {\quadoct} coordinates can be used exclusively
in the streamlined 3-sphere recognition algorithm of Jaco, Rubinstein
and Thompson, reducing experimental running times by factors of thousands.
We also introduce joint coordinates, a system with only $3n$ dimensions for
octagonal almost normal surfaces that has appealing geometric properties.}

\medskip
\noindent \textbf{AMS Classification}\quad 57N10 (57Q35)

\medskip
\noindent \textbf{Keywords} \quad Normal surfaces, almost normal surfaces,
quadrilateral-octagon coordinates, joint coordinates, Q-theory,
3-sphere recognition

\section{Introduction} \label{s-intro}

The theory of normal surfaces, introduced by Kneser \cite{kneser29-normal}
and developed by Haken \cite{haken61-knot,haken62-homeomorphism}, is central
to algorithmic 3-manifold topology.  In essence, normal surface theory
allows us to search for ``interesting'' embedded surfaces within a
3-manifold triangulation $\tri$ by enumerating the vertices of a
polytope in a high-dimensional vector space.  Normal surfaces are
defined by their intersections with the tetrahedra of $\tri$, which must
be collections of disjoint triangles and/or quadrilaterals, collectively
referred to as \emph{normal discs}.

In the early 1990s, Rubinstein introduced the concept of an almost normal
surface, for use with problems such as 3-sphere recognition and finding
Heegaard splittings \cite{rubinstein97-3sphere}.  Almost normal surfaces are
essentially normal surfaces with a single unusual intersection piece,
which may be either an octagon or a tube.  Thompson subsequently refined
the 3-sphere recognition algorithm to remove any need for tubes
\cite{thompson94-thinposition}, and since then almost normal surfaces
have appeared
in algorithms such as determining Heegaard genus \cite{lackenby08-tunnel},
recognising small Seifert fibred spaces \cite{rubinstein04-smallsfs}, and
finding bridge surfaces in knot complements \cite{wilson08-anknot}.

In this paper we focus on \emph{octagonal} almost normal surfaces; that
is, almost normal surfaces in which the unusual intersection piece is an
octagon, not a tube.  The reason for this restriction is that octagonal
almost normal surfaces are both tractable and useful, and have important
applications beyond 3-manifold topology.  In detail:
\begin{itemize}
    \item For practical computation, octagonal almost normal surfaces
    are significantly easier to deal with than general almost normal
    surfaces.  In particular, the translation between surfaces and
    high-dimensional vectors becomes much simpler, and the enumeration
    of these vectors is less fraught with complications.
    \item As shown by Thompson \cite{thompson94-thinposition},
    octagonal almost normal surfaces are sufficient for running the
    3-sphere recognition algorithm.
    \item Following on from the previous point,
    an efficient 3-sphere recognition algorithm is important for
    computation in \emph{4-manifold} topology.  For example, answering
    even the basic question ``is $\tri$ a 4-manifold
    triangulation?''\ requires us to run the 3-sphere recognition
    algorithm over a neighbourhood of each vertex of $\tri$.
    Therefore, improving the efficiency of 3-sphere recognition
    is an important step towards a general efficient computational
    framework for working with 4-manifold triangulations.
\end{itemize}

As suggested above,
our focus here is on the \emph{efficiency} of working with almost normal
surfaces.  The fundamental problem that we face is that the underlying polytope
vertex enumeration can grow exponentially slowly in the number of
tetrahedra.  This means that in practice normal surface algorithms
cannot be run on large triangulations.
Moreover, this exponential growth is not the fault of the algorithms,
but an unavoidable feature of the problems that they try to solve.  For
illustrations of this, see \cite{burton09-extreme} which describes cases
in which the underlying vertex enumeration problem has exponentially
many solutions, or see the proof by Agol et al.~that
computing 3-manifold knot genus (one of the many applications of normal
surface theory) is NP-complete \cite{agol02-knotgenus}.

For almost normal surfaces, our efficiency troubles are even worse than
for normal surfaces.  This is because the polytope vertex enumeration is
not just exponentially slow in the number of tetrahedra $n$, but also in
the dimension of the underlying vector space.  For normal surfaces this
dimension is $7n$, whereas
for octagonal almost normal surfaces this dimension is
$10n$, a significant difference when dealing with an exponential algorithm.

In the realm of normal surfaces, much progress has been made in improving
the efficiency of enumeration algorithms
\cite{burton08-dd,burton09-convert,tollefson98-quadspace}.
One key development has been Tollefson's quadrilateral coordinates
\cite{tollefson98-quadspace}, in which we work only with
quadrilateral normal discs and then reconstruct the triangular
discs afterwards.  This allows us to perform our expensive polytope
vertex enumeration in dimension $3n$ instead of $7n$, which yields
substantial efficiency improvements.

There are two complications with Tollefson's approach:
\begin{itemize}
    \item When reconstructing a normal surface from its
    quadrilateral discs, we cannot recover any vertex linking components
    (these components lie at the frontiers of small
    regular neighbourhoods of vertices of the triangulation).  This is
    typically not a problem, since such components are rarely of interest.

    \item When we use quadrilateral coordinates for the
    underlying polytope vertex enumeration, some solutions are ``lost''.
    That is, the resulting set of normal surfaces
    (called \emph{vertex normal surfaces}) is a strict subset of what we
    would obtain using the traditional $7n$-dimensional framework of Haken.

    This latter issue can be resolved in two different ways.
    For some high-level topological algorithms,
    such as the detection of two-sided incompressible surfaces
    \cite{tollefson98-quadspace}, it has been proven that at least one of
    the surfaces that we need to find will not be lost.
    As a more general resolution to this problem, there is a fast
    quadrilateral-to-standard conversion algorithm through which we
    can recover all of the lost surfaces \cite{burton09-convert}.
\end{itemize}

The main purpose of this paper is to develop an analogous theory for octagonal
almost normal surfaces.  Specifically, we show that we can work with only
quadrilateral normal discs and octagonal almost normal discs, and then
reconstruct the triangular discs afterwards.  As a consequence, the
dimension for our vertex enumeration drops from $10n$ to $6n$.

We run into the same complications as before---vertex linking components
cannot be recovered, and we may lose some of our original solutions.
Here we show that, as with quadrilateral coordinates, these are not serious
problems.  In particular, we show that despite this loss of information,
{\quadoct} coordinates suffice for the 3-sphere recognition
algorithm.  More generally, we observe that
the fast quadrilateral-to-standard conversion algorithm of
\cite{burton09-convert} works seamlessly with octagonal almost normal surfaces.

As a practical measure of benefit, we use the software package {\regina}
\cite{regina,burton04-regina} to compare running times for the 3-sphere
recognition algorithm with and without {\quadoct} coordinates.
Here we see {\quadoct} coordinates improving
performance by factors of thousands in several cases.
Readers can experiment with {\quadoct} coordinates for themselves by
downloading {\regina} version~4.6 or later.

We finish this paper by introducing
\emph{joint coordinates}, in which we exploit natural relationships
between quadrilaterals and octagons to reduce our $6n$ dimensions for
octagonal almost normal surfaces down to just $3n$ dimensions.  Although
these coordinates cannot be used with existing enumeration algorithms
(due to a loss of convexity in the underlying polytope), they have
appealing geometric properties that make them useful for visualisation,
and which may help develop intuition about the structure of the
corresponding solution space.

All of the results in this paper apply only to compact 3-manifold
triangulations.  In particular, they do not cover the ideal triangulations
of Thurston \cite{thurston78-lectures}, where the reconstruction of triangular
discs can result in pathological (but nevertheless useful) objects such as
spun normal surfaces, which contain infinitely many discs
\cite{tillmann08-finite}.

The layout of this paper is as follows.
Section~\ref{s-normal} begins with an overview of normal surfaces and
Tollefson's quadrilateral coordinates, and Section~\ref{s-almost}
follows with an overview of almost normal surfaces.
In Section~\ref{s-quadoct} we develop the core theory for {\quadoct}
coordinates, including necessary and sufficient conditions for a
$6n$-dimensional vector to represent an octagonal almost normal surface.

For the remainder of the paper we focus on applications and extensions of
this theory.
In Section~\ref{s-sphere} we describe the streamlined 3-sphere
recognition algorithm of Jaco, Rubinstein and Thompson
\cite{jaco03-0-efficiency}, and show that this algorithm
remains correct when we work in {\quadoct} coordinates
instead of the original $10n$-dimensional vector space.
Section~\ref{s-enumeration} focuses on the underlying polytope vertex
enumeration algorithm, where we observe that state-of-the-art algorithms
for enumerating normal surfaces \cite{burton08-dd,burton09-convert}
can be used seamlessly with octagonal almost normal surfaces and {\quadoct}
coordinates.
In Section~\ref{s-performance} we offer experimental measures of running
time that show how {\quadoct} coordinates improve the 3-sphere
recognition algorithm in practice, and in
Section~\ref{s-joint} we finish with a discussion of joint coordinates.

The author is grateful to the Victorian Partnership for Advanced
Computing for the use of their excellent computing resources, to the
University of Melbourne for their continued support for the software
package {\regina}, and to the anonymous referees for their thoughtful
suggestions.

\section{Normal Surfaces} \label{s-normal}

We assume that the reader is already familiar with the theory of normal
surfaces (if not, a good overview can be found in \cite{hass99-knotnp}).
In this section we outline the relevant aspects of the theory, concentrating
on the differences between Haken's original formulation \cite{haken61-knot}
and Tollefson's quadrilateral coordinates \cite{tollefson98-quadspace}.
For a more detailed discussion of these two formulations and the
relationships between them, the reader is referred to \cite{burton09-convert}.

Throughout this paper we assume that we are working with a
compact 3-manifold triangulation $\tri$ formed from $n$ tetrahedra.
By a \emph{compact triangulation},
we mean that every vertex of $\tri$ has a small neighbourhood whose frontier
is a sphere or a disc.  This ensures that $\tri$ is a triangulation of a
compact 3-manifold (possibly with boundary), and rules out the ideal
triangulations of Thurston \cite{thurston78-lectures} in which vertices
form higher-genus cusps.

To help keep the number of tetrahedra in $\tri$
small, we allow different faces of a tetrahedron to be identified (and
likewise with edges and vertices).  Some authors refer to triangulations
with this property as \emph{pseudo-triangulations} or
\emph{semi-simplicial triangulations}.
Faces, edges and vertices of $\tri$ that lie entirely within the
3-manifold boundary are called \emph{boundary faces},
\emph{boundary edges} and \emph{boundary vertices} of $\tri$ respectively.

An \emph{embedded normal surface} in $\tri$ is a properly embedded surface
(possibly disconnected or empty)
that intersects each tetrahedron of $\tri$ in a collection of disjoint
\emph{normal discs}.  Each normal disc is either a \emph{triangle}
or a \emph{quadrilateral}, with a boundary consisting of three or four
arcs respectively that cross distinct faces of the tetrahedron.
Figure~\ref{fig-normaldiscs} illustrates several disjoint triangles and
quadrilaterals within a tetrahedron.

\begin{figure}[htb]
\centering
\includegraphics[scale=0.7]{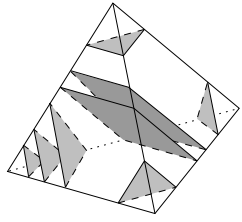}
\caption{Several normal discs within a single tetrahedron}
\label{fig-normaldiscs}
\end{figure}

The triangles and quadrilaterals within a tetrahedron can be grouped
into seven \emph{normal disc types}, according to which edges of the
tetrahedron they intersect.  This includes four triangular disc types
and three quadrilateral disc types, all of which are illustrated in
Figure~\ref{fig-normaltypes}.

\begin{figure}[htb]
\centering
\includegraphics[scale=0.55]{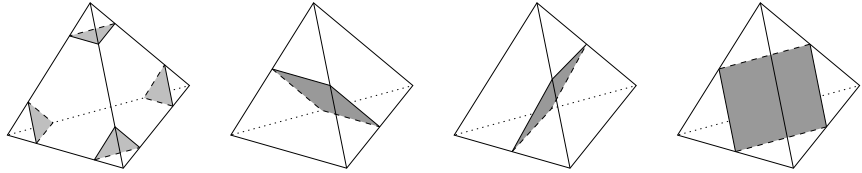}
\caption{The seven different normal disc types within a tetrahedron}
\label{fig-normaltypes}
\end{figure}

Equivalence of normal surfaces is defined by \emph{normal isotopy}, which
is an ambient isotopy that preserves each simplex of the
triangulation $\tri$.  Throughout this paper, any two surfaces that are
related by normal isotopy are regarded as the same surface.

Vertex links are normal surfaces that play an important role in the
discussion that follows.  If $V$ is a vertex of the triangulation $\tri$
then the \emph{vertex link} of $V$, denoted $\ell(V)$, is the normal surface
at the frontier of a small regular neighbourhood of $V$.  This surface
is formed entirely from triangular discs (one copy of each triangular
disc type surrounding $V$).  Here we follow the nomenclature of
Jaco and Rubinstein \cite{jaco03-0-efficiency}; Tollefson refers to
vertex links as \emph{trivial surfaces}.

A core strength of normal surface theory is its ability to reduce
difficult problems in topology to simpler problems in linear algebra.
This is where the formulations of Haken and Tollefson differ, and so we
slow down from here onwards to give full details.  The key difference
between the two formulations is that Haken works in a $7n$-dimensional
vector space with coordinates based on triangle and quadrilateral disc types,
whereas Tollefson works in a $3n$-dimensional space based on quadrilateral
disc types only.

\begin{definition}[Vector Representations]
    Let $\tri$ be a compact 3-manifold triangulation formed from the $n$
    tetrahedra $\Delta_1,\ldots,\Delta_n$, and let $S$ be an embedded
    normal surface in $\tri$.  For each tetrahedron $\Delta_i$, let
    $t_{i,1}$, $t_{i,2}$, $t_{i,3}$ and $t_{i,4}$ denote the number of
    triangular discs of $S$ of each type in $\Delta_i$, and let
    $q_{i,1}$, $q_{i,2}$ and $q_{i,3}$ denote the number of
    quadrilateral discs of $S$ of each type in $\Delta_i$.

    Then the \emph{standard vector representation} of $S$, denoted
    $\vrep{S}$, is the $7n$-dimensional vector
    \[ \vrep{S}~=\,
    \left(\ t_{1,1},t_{1,2},t_{1,3},t_{1,4},\ q_{1,1},q_{1,2},q_{1,3}\ ;
    \ t_{2,1},t_{2,2},t_{2,3},t_{2,4},\ q_{2,1},q_{2,2},q_{2,3}\ ;
    \ \ldots,q_{n,3}\ \right), \]
    and the \emph{quadrilateral vector representation} of $S$,
    denoted $\qrep{S}$, is the $3n$-dimensional vector
    \[ \qrep{S}~=\,
    \left(\ q_{1,1},q_{1,2},q_{1,3}\ ;\ q_{2,1},q_{2,2},q_{2,3}\ ;
    \ \ldots,q_{n,3}\ \right). \]
\end{definition}

When we are working with $\vrep{S}$, we say we are
working in \emph{standard coordinates} (or \emph{standard normal
coordinates} if we wish to distinguish between normal and almost normal
surfaces).  Likewise, when working with $\qrep{S}$ we say we are working
in \emph{quadrilateral coordinates}.  The following uniqueness results
are due to Haken \cite{haken61-knot} and Tollefson \cite{tollefson98-quadspace}:

\begin{lemma} \label{l-vecrep}
    Let $\tri$ be a compact 3-manifold triangulation, and let
    $S$ and $S'$ be embedded normal surfaces in $\tri$.
    \begin{itemize}
        \item The standard vector representations
        $\vrep{S}$ and $\vrep{S'}$ are equal if and only if
        the surfaces $S$ and $S'$ are normal isotopic
        (i.e., they are the ``same'' normal surface).

        \item The quadrilateral vector representations
        $\qrep{S}$ and $\qrep{S'}$ are equal if and only if
        either (i)~$S$ and $S'$ are normal isotopic, or
        (ii)~$S$ and $S'$ can be made normal isotopic by
        adding or removing vertex linking components.
    \end{itemize}
\end{lemma}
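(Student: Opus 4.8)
The plan is to prove the two bullet points separately, with the first (standard coordinates) being essentially a direct consequence of the definitions and the second (quadrilateral coordinates) requiring the real work.

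For the first bullet point, the ``only if'' direction is immediate: normal isotopies preserve each simplex of $\tri$, hence preserve the disc type of every normal disc, hence preserve every coordinate $t_{i,j}$ and $q_{i,j}$. For the ``if'' direction, suppose $\vrep{S} = \vrep{S'}$. Within each tetrahedron $\Delta_i$, the discs of a fixed triangular type are nested parallel copies, and the discs of a fixed quadrilateral type are likewise parallel copies; disjointness forces a canonical ``stacking'' order, so once we know the count $t_{i,j}$ or $q_{i,j}$ for each type, the collection of discs in $\Delta_i$ is determined up to normal isotopy fixing $\Delta_i$. One then checks that the matching counts across shared faces guarantee these per-tetrahedron isotopies can be glued into a global normal isotopy carrying $S$ to $S'$. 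This is the standard argument; I would cite Haken \cite{haken61-knot} and sketch only the stacking observation.

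For the second bullet point, direction (i)$\Rightarrow$ and (ii)$\Rightarrow$ are easy: normal isotopic surfaces have equal $\qrep{\cdot}$ by the same disc-type-preservation argument, and adding or removing a vertex linking component $\ell(V)$ changes only triangular coordinates (since $\ell(V)$ is built entirely from triangles), hence leaves $\qrep{\cdot}$ unchanged. The substantive direction is the converse: assuming $\qrep{S} = \qrep{S'}$, I must reconstruct the triangular coordinates up to vertex links. The key idea is that, once the quadrilateral discs in each tetrahedron are fixed, they cut each tetrahedron into pieces, and the triangular discs of each type in $\Delta_i$ again form a stack whose only freedom is ``how many'' — and this freedom is governed by matching conditions along the faces of $\tri$. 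Concretely, I would set up, for each face $f$ of $\tri$ and each of its three corners, the compatibility equation saying that the number of triangular arcs on that corner agrees from the two tetrahedra glued along $f$; these equations relate the $t_{i,j}$'s linearly, with the $q_{i,j}$'s (now known) as fixed data. The solution space of this linear system, intersected with the nonnegativity constraints, is what must be analysed.

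The main obstacle — and the heart of the lemma — is showing that the space of valid triangular-coordinate vectors compatible with a fixed quadrilateral vector is exactly a coset of the span of the vertex-link vectors $\{\vrep{\ell(V)} : V \text{ a vertex of }\tri\}$, restricted to where all coordinates stay nonnegative. One direction is clear (adding a vertex link stays compatible and changes nothing quadrilateral); the difficulty is the reverse, that any two compatible triangular solutions differ by an integer combination of vertex links. I would prove this by a connectivity/propagation argument: fix a tetrahedron and a triangular type, change its count by one, and show the face-matching equations force a cascade of $\pm 1$ changes that propagates around the triangulation and closes up precisely along the triangle types surrounding a single vertex $V$ — i.e., the ``elementary moves'' on triangular coordinates are exactly the vertex links. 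Ruling out nonnegativity obstructions (so that the two given surfaces $S,S'$ genuinely differ by adding/removing \emph{whole} vertex-link components rather than fractional ones) uses that $S$ and $S'$ are honest embedded surfaces, so their coordinates are genuine nonnegative integers and the difference $\vrep{S} - \vrep{S'}$ lies in the integer lattice spanned by the $\vrep{\ell(V)}$. I would present the propagation argument in full and refer to Tollefson \cite{tollefson98-quadspace} and \cite{burton09-convert} for the surrounding framework.
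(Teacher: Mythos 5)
Your proposal is correct and follows essentially the same route the paper takes: the paper itself only cites Lemma~\ref{l-vecrep} to Haken and Tollefson, but your per-tetrahedron ``unique packing'' observation for the first bullet and your propagation argument for the second (the homogeneous face-matching equations force the triangular differences to be constant around each vertex, so $\vrep{S}-\vrep{S'}$ is an integer combination of the vectors $\vrep{\ell(V)}$) are exactly the arguments the paper deploys for the almost normal analogues in Lemma~\ref{l-an-vecrep} and the ``only if'' direction of Lemma~\ref{l-qo-vecrep}. No gaps.
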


Since we are rarely interested in vertex linking components,
Lemma~\ref{l-vecrep} shows that the standard and quadrilateral
vector representations each contain everything we might want to know
about an embedded normal surface.

Not every integer vector $\mathbf{w} \in \R^{7n}$ or
$\mathbf{w} \in \R^{3n}$ is the vector representation of a normal surface.
The necessary conditions on $\mathbf{w}$ include a set of
\emph{matching equations} as well as a set of \emph{quadrilateral constraints},
which we define as follows.

\begin{definition}[Standard Matching Equations] \label{d-matchingstd}
    Let $\tri$ be a compact 3-manifold triangulation formed from the $n$
    tetrahedra $\Delta_1,\ldots,\Delta_n$, and let $\mathbf{w} \in \R^{7n}$
    be any $7n$-dimensional vector whose coordinates we label
    \[ \mathbf{w}~=\,
    \left(\ t_{1,1},t_{1,2},t_{1,3},t_{1,4},\ q_{1,1},q_{1,2},q_{1,3}\ ;
    \ t_{2,1},t_{2,2},t_{2,3},t_{2,4},\ q_{2,1},q_{2,2},q_{2,3}\ ;
    \ \ldots,q_{n,3}\ \right). \]
    For each non-boundary face of $\tri$ and each of the three edges
    surrounding it, we obtain a \emph{standard matching equation}
    on $\mathbf{w}$ as follows.

    Let $F$ be some non-boundary face of $\tri$, and let $e$ be one
    of the three edges surrounding $F$.  Suppose that
    $\Delta_i$ and $\Delta_j$ are the two tetrahedra on either side of $F$.
    Then there is precisely one triangular disc type and one quadrilateral
    disc type in each of $\Delta_i$ and $\Delta_j$ that meets $F$ in an
    arc parallel to $e$, as illustrated in Figure~\ref{fig-matchingstd}.
    Suppose these disc types correspond to coordinates
    $t_{i,a}$, $q_{i,b}$, $t_{j,c}$ and $q_{j,d}$ respectively.
    Then we obtain the matching equation
    \[ t_{i,a} + q_{i,b} = t_{j,c} + q_{j,d}. \]
\end{definition}

\begin{figure}[htb]
\centering
\includegraphics{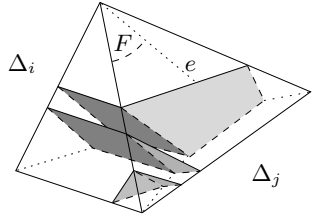}
\caption{Building the standard matching equations}
\label{fig-matchingstd}
\end{figure}

Essentially, the standard matching equations ensure that all of the normal
discs on either side of a non-boundary face $F$ can be joined together.
In Figure~\ref{fig-matchingstd}, the four coordinates are
$(t_{i,a},q_{i,b},t_{j,c},q_{j,d}) = (1,2,2,1)$, giving the
equation $1+2=2+1$ which is indeed satisfied.
If $\tri$ is a closed triangulation (i.e., it has no boundary), then
there are precisely $6n$ standard matching equations for $\tri$
(three for each of the $2n$ faces of $\tri$).

\begin{definition}[Quadrilateral Matching Equations] \label{d-matchingquad}
    Let $\tri$ be a compact 3-manifold triangulation formed from the $n$
    tetrahedra $\Delta_1,\ldots,\Delta_n$, and let $\mathbf{w} \in \R^{3n}$
    be any $3n$-dimensional vector whose coordinates we label
    \[ \mathbf{w}~=\,
    \left(\ q_{1,1},q_{1,2},q_{1,3}\ ;\ q_{2,1},q_{2,2},q_{2,3}\ ;
    \ \ldots,q_{n,3}\ \right). \]
    For each non-boundary edge of $\tri$, we obtain a
    \emph{quadrilateral matching equation}
    on $\mathbf{w}$ as follows.

    Let $e$ be some non-boundary edge of $\tri$, and arbitrarily label
    the two ends of $e$ as \emph{upper} and \emph{lower}.
    The tetrahedra containing edge $e$ are arranged in a cycle around $e$,
    as illustrated in Figure~\ref{fig-matchingquad}.  Choose some arbitrary
    direction around this cycle, and suppose that the tetrahedra that we
    encounter as we travel in this direction around the cycle are labelled
    $\Delta_{i_1},\ldots,\Delta_{i_t}$.

    \begin{figure}[htb]
    \centering
    \includegraphics[scale=0.8]{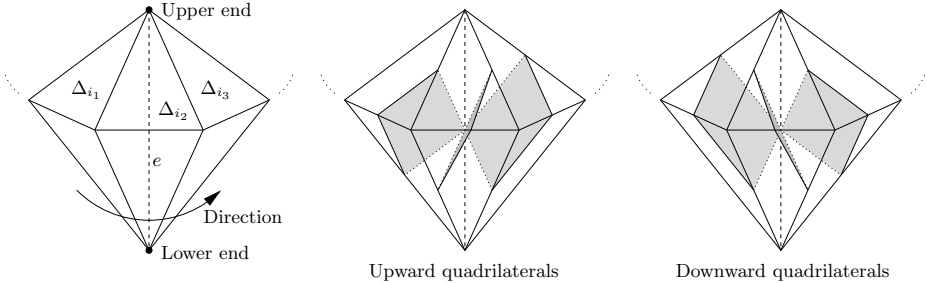}
    \caption{Building the quadrilateral matching equations}
    \label{fig-matchingquad}
    \end{figure}

    For each tetrahedron in this cycle, there are two quadrilateral
    types meeting edge $e$: one that rises from the lower end of $e$
    to the upper as we travel around the cycle in the chosen direction,
    and one that falls from the upper end of $e$ to the lower.
    We call these the \emph{upward quadrilaterals} and \emph{downward
    quadrilaterals} respectively; these are again illustrated in
    Figure~\ref{fig-matchingquad}.

    Suppose now that the coordinates corresponding to the upward and
    downward quadrilateral types are
    $q_{i_1,u_1},q_{i_2,u_2},\ldots,q_{i_t,u_t}$ and
    $q_{i_1,d_1},q_{i_2,d_2},\ldots,q_{i_t,d_t}$ respectively.
    Then we obtain the matching equation
    \begin{equation} \label{eqn-matchingquad}
       q_{i_1,u_1} + q_{i_2,u_2} + \ldots + q_{i_t,u_t} =
       q_{i_1,d_1} + q_{i_2,d_2} + \ldots + q_{i_t,d_t} .
    \end{equation}
    In other words, the total number of upward quadrilaterals surrounding $e$
    equals the total number of downward quadrilaterals surrounding $e$.
\end{definition}

Note that a single tetrahedron might appear multiple times in the cycle
around $e$, in which case a single coordinate $q_{i,j}$ might appear
more than once in the equation (\ref{eqn-matchingquad}).
For a closed triangulation $\tri$ with $v$ vertices, a quick
Euler characteristic calculation shows that we have precisely
$n+v$ edges in our triangulation and therefore precisely
$n+v$ quadrilateral matching equations.

\begin{definition}[Quadrilateral Constraints] \label{d-quadconst}
    \ Let $\tri$ be a compact 3-manifold triangulation formed from the $n$
    tetrahedra $\Delta_1,\ldots,\Delta_n$, and consider any vector
    \begin{align*}
    \mathbf{w}~&=\,
    \left(\ t_{1,1},t_{1,2},t_{1,3},t_{1,4},\ q_{1,1},q_{1,2},q_{1,3}\ ;
    \ \ldots,q_{n,3}\ \right) \in \R^{7n}\ \mbox{or} \\
    \mathbf{w}~&=\,
    \left(\ q_{1,1},q_{1,2},q_{1,3}\ ;
    \ \ldots,q_{n,3}\ \right) \in \R^{3n}.
    \end{align*}
    We say that $\mathbf{w}$ satisfies the \emph{quadrilateral
    constraints} if, for every tetrahedron $\Delta_i$,
    at most one of the quadrilateral coordinates
    $q_{i,1}$, $q_{i,2}$ and $q_{i,3}$ is non-zero.
\end{definition}

We can now describe a full set of necessary and sufficient conditions
for a vector $\mathbf{w} \in \R^{7n}$ or $\mathbf{w} \in \R^{3n}$ to be
the vector representation of some embedded normal surface.  The
following result is due to Haken \cite{haken61-knot} and
Tollefson \cite{tollefson98-quadspace}.

\begin{theorem} \label{t-admissible}
    Let $\tri$ be a compact 3-manifold triangulation formed from $n$ tetrahedra.
    An integer vector ($\mathbf{w} \in \R^{7n}$ or $\mathbf{w} \in \R^{3n}$)
    is the (standard or quadrilateral) vector representation of an
    embedded normal surface in $\tri$ if and only if:
    \begin{itemize}
        \item The coordinates of $\mathbf{w}$ are all non-negative;
        \item $\mathbf{w}$ satisfies the (standard or quadrilateral)
        matching equations for $\tri$;
        \item $\mathbf{w}$ satisfies the quadrilateral constraints for $\tri$.
    \end{itemize}
    Such a vector is referred to as an \emph{admissible
    vector}.\footnote{It is sometimes useful to extend the concept of
    admissibility to rational vectors or even real vectors in
    $\R^{7n}$ or $\R^{3n}$, as seen for instance in
    \cite{burton09-convert}.  However, we do not need such extensions in
    this paper.}
\end{theorem}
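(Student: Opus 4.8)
The statement collects Haken's theorem (the $7n$-dimensional case) and Tollefson's (the $3n$-dimensional case), so the proof splits according to the coordinate system and, within each, into a necessity and a sufficiency direction. Necessity is the easy half. If $\mathbf{w}=\vrep{S}$, then non-negativity is clear since the coordinates count discs; the quadrilateral constraints hold because two quadrilaterals of distinct type in one tetrahedron must intersect, so an embedded surface uses at most one quadrilateral type per tetrahedron; and for a non-boundary face $F$ with surrounding edge $e$, the normal arcs of $S$ lying on $F$ parallel to $e$ number $t_{i,a}+q_{i,b}$ when counted from $\Delta_i$ and $t_{j,c}+q_{j,d}$ when counted from $\Delta_j$, but these are literally the same arcs, giving the standard matching equation. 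If instead $\mathbf{w}=\qrep{S}$, then $S$ also has a standard representation $(\mathbf{t},\mathbf{w})$ satisfying the standard matching equations; summing those equations over the faces around a non-boundary edge $e$, taken consistently around the cycle of tetrahedra about $e$ and each with the edge-choice that makes its triangle term the corner triangle at a fixed end of $e$, telescopes away all triangle terms and leaves exactly the quadrilateral matching equation for $e$. Hence $\mathbf{w}$ is admissible.

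For sufficiency in standard coordinates, suppose $\mathbf{w}=(\mathbf{t},\mathbf{q})$ satisfies the three conditions. In each tetrahedron $\Delta_i$ the quadrilateral constraint permits at most one usable quadrilateral type; place $q_{i,k}$ parallel copies of it together with $t_{i,j}$ parallel copies of each corner triangle, using the fact that a single quadrilateral type and the four corner triangle types can be realised pairwise disjointly. Each disc meets the faces of $\Delta_i$ in normal arcs, and on each non-boundary face $F$ the arcs presented from the two sides split into three families, one running parallel to each edge of $F$; the standard matching equations say precisely that corresponding families have equal size, so the arcs can be glued in the unique isotopy-compatible (nested) pattern. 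Carrying out this gluing over all non-boundary faces produces a properly embedded normal surface $S$ (arcs on boundary faces becoming $\partial S$), and by construction $\vrep{S}=\mathbf{w}$.

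For sufficiency in quadrilateral coordinates, let $\mathbf{w}\in\R^{3n}$ satisfy the three conditions; it suffices to find non-negative integers $t_{i,j}$ for which $(\mathbf{t},\mathbf{w})$ satisfies the standard matching equations, since the previous paragraph then yields a surface, whose quadrilateral representation is $\mathbf{w}$. The standard matching equations partition according to the vertex of $\tri$ opposite the chosen edge within the relevant face, so we may fix a single vertex $V$. The corner triangles at $V$ are indexed by the triangles of the vertex link $\ell(V)$, the standard matching equations at $V$ are indexed by the edges of $\ell(V)$, and each such equation prescribes the difference between the triangle counts attached to the two triangles of $\ell(V)$ that share the corresponding edge, the prescribed value being a difference of two coordinates of $\mathbf{w}$. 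Thus we must find a function on the triangles of $\ell(V)$ with prescribed jumps across edges; such a function exists, uniquely up to an additive constant on each component, exactly when the prescribed jumps sum to zero around every cycle of the graph dual to $\ell(V)$. Since $\tri$ is a compact triangulation, $\ell(V)$ is a sphere or a disc, so its dual cycle space is generated by the loops around the interior vertices of $\ell(V)$; an interior vertex corresponds to a non-boundary edge $e$ of $\tri$ through $V$, and a short telescoping identifies the jump-sum around it with the two sides of the quadrilateral matching equation for $e$, which agree by hypothesis. Adding a sufficiently large constant on each vertex link makes every $t_{i,j}$ a non-negative integer, completing $\mathbf{w}$ to an admissible $7n$-vector. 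The routine parts of the argument are the disjointness of the per-tetrahedron discs and the nested arc-gluing along faces; the real content, and the step I expect to be the main obstacle, is this last reconstruction — matching the link combinatorics against the quadrilateral matching equations, and invoking compactness (planarity of $\ell(V)$) to reduce the global cycle condition to the per-edge conditions actually supplied. This is precisely the point at which the theorem fails for the ideal triangulations excluded by the hypotheses.
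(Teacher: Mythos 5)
Your proof is correct. The paper states Theorem~\ref{t-admissible} without proof, attributing it to Haken and Tollefson, but your sufficiency argument in quadrilateral coordinates --- writing the unknown triangle counts on the vertex links, reading each standard matching equation as a prescribed jump across an edge of $\ell(V)$, and using the fact that $\ell(V)$ is a sphere or disc so that the cycle conditions reduce to the per-edge (quadrilateral matching equation) conditions --- is exactly the cochain/cocycle argument the paper later spells out in full for the quadrilateral-octagon analogue, Theorem~\ref{t-qo-admissible}; your remark that this is the step that fails for ideal triangulations likewise matches the paper's discussion following that proof.
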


Essentially, the non-negativity constraint ensures that the coordinates of
$\mathbf{w}$ can be used to count normal discs, the matching equations
ensure that these discs can be joined together to form a surface, and
the quadrilateral constraint ensures that this surface is embedded
(since any two quadrilaterals of different types within the same
tetrahedron must intersect).

Many high-level algorithms in 3-manifold topology involve the
enumeration of \emph{vertex normal surfaces}, which form a basis from
which we can reconstruct all embedded normal surfaces within a
triangulation $\tri$.  The relevant definitions are as follows.

\begin{definition}[Projective Solution Space]
    \ Let $\tri$ be a compact 3-manifold triangulation formed from $n$ tetrahedra.
    The set of all non-negative vectors in $\R^{7n}$ that satisfy the
    standard matching equations for $\tri$ forms a rational polyhedral cone
    in $\R^{7n}$.  The \emph{standard projective solution space} for $\tri$
    is the rational polytope formed by intersecting this cone with the
    hyperplane $\{\mathbf{w} \in \R^{7n}\,|\,\sum w_i = 1\}$.

    The \emph{quadrilateral projective solution space} for $\tri$ is
    defined in a similar fashion by working in $\R^{3n}$ and using the
    quadrilateral matching equations instead.
\end{definition}

\begin{definition}[Vertex Normal Surface] \label{d-vertex}
    Let $\tri$ be a compact 3-manifold triangulation,
    and let $S$ be an embedded normal surface in $\tri$.
    If the standard vector representation $\vrep{S}$ is a positive
    multiple of some vertex of the standard projective solution space,
    then we call $S$ a \emph{standard vertex normal surface}.
    Likewise, if the quadrilateral vector representation $\qrep{S}$ is a
    positive multiple of some vertex of the quadrilateral projective solution
    space, then we call $S$ a \emph{quadrilateral vertex normal surface}.
\end{definition}

It should be noted that the definition of a vertex normal surface varies
between authors.  Definition~\ref{d-vertex} is consistent with
Jaco and Rubinstein \cite{jaco03-0-efficiency}, as well as
earlier work of this author \cite{burton09-convert}.
Other authors impose additional conditions, such as Tollefson
\cite{tollefson98-quadspace} who requires $S$ to be connected and
two-sided, or Jaco and Oertel \cite{jaco84-haken} who require the
elements of $\vrep{S}$ to have no common factor
(and who use the alternate name \emph{fundamental edge surface}).

Although vertex normal surfaces can be used as a basis for
reconstructing all embedded normal surfaces within a triangulation,
this is typically not feasible since there are infinitely many such
surfaces.  Instead we frequently find that, when searching for an
embedded normal surface with some desirable property, we can
restrict our attention \emph{only} to vertex normal surfaces.
For instance, Jaco and Oertel \cite{jaco84-haken} prove for closed
irreducible 3-manifolds that if a
two-sided incompressible surface exists then one can be found as a
standard vertex normal surface.
Likewise, Jaco and Tollefson \cite{jaco95-algorithms-decomposition}
prove that if a 3-manifold contains an essential disc or sphere then
one can be found as a standard vertex normal surface.

Using results of this type, a typical high-level algorithm based on normal
surface theory includes the following steps:
\begin{enumerate}[(i)]
    \item \label{en-algm-enumerate}
    Enumerate the (finitely many) vertices of the projective
    solution space for a given triangulation $\tri$, using techniques
    from linear programming (see \cite{burton08-dd} for details).

    \item Eliminate those vertices that do not satisfy the quadrilateral
    constraints, and then reconstruct the vertex normal surfaces of
    $\tri$ by taking multiples of those vertices that remain.
    Although there are infinitely many such multiples, only finitely many
    will yield \emph{connected} normal surfaces, which is typically what we
    are searching for.

    \item Test each of these vertex normal surfaces for some desirable
    property (such as incompressibility, or being an essential disc or
    sphere).
\end{enumerate}

Here we can see the real benefit of working in quadrilateral
coordinates---the enumeration of step~(\ref{en-algm-enumerate}) takes
place in a vector space of dimension $7n$ for standard coordinates, but
only $3n$ for quadrilateral coordinates.  Since both the running time and
memory usage can become exponential in this dimension \cite{burton08-dd},
a reduction from $7n$ to $3n$ can yield dramatic improvements in
performance.

However, there is a trade-off for using quadrilateral coordinates.
Although every connected quadrilateral vertex normal surface is also a
standard vertex normal surface \cite{burton09-convert}, the converse is not
true in general.  Instead, there might be standard vertex normal surfaces
(perhaps including the incompressible surfaces, essential discs and spheres
or whatever else we are searching for) that do not show up as
quadrilateral vertex normal surfaces.  These ``lost surfaces'' can undermine
the correctness of our algorithms, which we maintain in one of two ways:
\begin{itemize}
    \item
    We can resolve the problem using theory.  This requires
    us to prove that, if the surface we are searching for exists, then
    it exists not only as a \emph{standard} vertex normal surface but
    also as a \emph{quadrilateral} vertex normal surface.

    Such results can be more difficult to prove in quadrilateral coordinates
    than in standard coordinates,
    partly because important functions such as Euler characteristic are
    no longer linear.  Nevertheless, examples can be
    found---Tollefson \cite{tollefson98-quadspace} proves such a result
    for two-sided incompressible surfaces, and
    Jaco et al.~\cite{jaco02-algorithms-essential} refer to similar
    results for essential discs and spheres.

    \item
    We can resolve the problem using algorithms and computation.
    There is a fast algorithm described in \cite{burton09-convert}
    that converts a full set of quadrilateral vertex normal surfaces to a
    full set of standard vertex normal surfaces, thereby recovering those
    surfaces that were lost.
    This algorithm is found to have a negligible running time, which means
    that we are able to work with standard
    vertex normal surfaces yet still enjoy the significantly greater
    performance of quadrilateral coordinates.
\end{itemize}

The main part of this paper is concerned with the development of
{\quadoct} coordinates for almost normal surfaces, where we face a
similar trade-off.  In Section~\ref{s-sphere} we resolve this problem
for the 3-sphere recognition algorithm using the theoretical route, and in
Section~\ref{s-enumeration} we show how the more general algorithmic solution
can be used.

\section{Almost Normal Surfaces} \label{s-almost}

Almost normal surfaces are an extension of normal surfaces
whereby, in addition to the usual normal discs, we allow one tetrahedron
of the triangulation to contain a single unusual intersection piece.
Introduced by Rubinstein for use with the 3-sphere algorithm and related
problems \cite{rubinstein95-3sphere,rubinstein97-3sphere},
almost normal surfaces also enjoy other applications such as
the determination of Heegaard genus \cite{lackenby08-tunnel},
the recognition of small Seifert fibred spaces \cite{rubinstein04-smallsfs},
and finding bridge surfaces in knot complements \cite{wilson08-anknot}.

We begin this section by defining almost normal surfaces, whereupon we
restrict our attention to \emph{octagonal} almost normal surfaces.
Octagonal almost normal surfaces are significantly easier to deal with,
and Thompson has proven that they are sufficient for use with the 3-sphere
recognition algorithm \cite{thompson94-thinposition}.

In the remainder of this section, we define concepts similar to those
seen in Section~\ref{s-normal}, such as vector representation, matching
equations and vertex almost normal surfaces.  These concepts and their
corresponding results are well-known extensions to traditional
normal surface theory; see Lackenby \cite{lackenby08-tunnel} or
Rubinstein \cite{rubinstein97-3sphere} for a brief sketch.
The details however are not explicitly
laid down in the current literature, and so we present these details here.

\begin{definition}[Almost Normal Surface] \label{d-an}
    Let $\tri$ be a compact 3-manifold triangulation, and let
    $\Delta$ be some tetrahedron of $\tri$.  A \emph{normal octagon}
    in $\Delta$ is a properly embedded disc in $\Delta$ whose boundary
    consists of eight normal arcs running across the faces of $\Delta$,
    as illustrated in Figure~\ref{fig-anpieces}.  A \emph{normal tube}
    in $\Delta$ is a properly embedded annulus in $\Delta$ consisting of
    any two disjoint normal discs joined by an unknotted tube, again
    illustrated in Figure~\ref{fig-anpieces}.

    \begin{figure}[htb]
    \centering
    \includegraphics{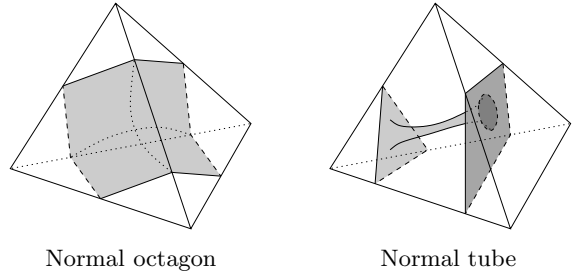}
    \caption{Examples of exceptional pieces in almost normal surfaces}
    \label{fig-anpieces}
    \end{figure}

    An \emph{almost normal surface} in $\tri$ is a properly embedded surface
    whose intersection with the tetrahedra of $\tri$ consists of
    (i)~zero or more normal discs, plus (ii)~in precisely one tetrahedron
    of $\tri$, either a single normal octagon or a single normal
    tube\footnote{Jaco and Rubinstein \cite{jaco03-0-efficiency} add the
    additional constraint that the tube does not join two copies of the same
    normal surface.}
    (but not both).  This single octagon or tube is referred to as the
    \emph{exceptional piece} of the almost normal surface.
\end{definition}

Although Definition~\ref{d-an} requires that almost normal surfaces be
properly embedded, for brevity's sake we do not include the word ``embedded''
in their name.
For the remainder of this paper we concern ourselves only with octagonal
almost normal surfaces, which are defined as follows.

\begin{definition}[Octagonal Almost Normal Surface] \label{d-octan}
    An \emph{octagonal almost normal surface} is an almost normal surface
    whose exceptional piece is a normal octagon (not a tube).  For contrast,
    we will often refer to the almost normal surfaces of
    Definition~\ref{d-an} (where the exceptional piece may be either
    an octagon or a tube) as \emph{general almost normal surfaces}.

    \begin{figure}[htb]
    \centering
    \includegraphics[scale=0.5]{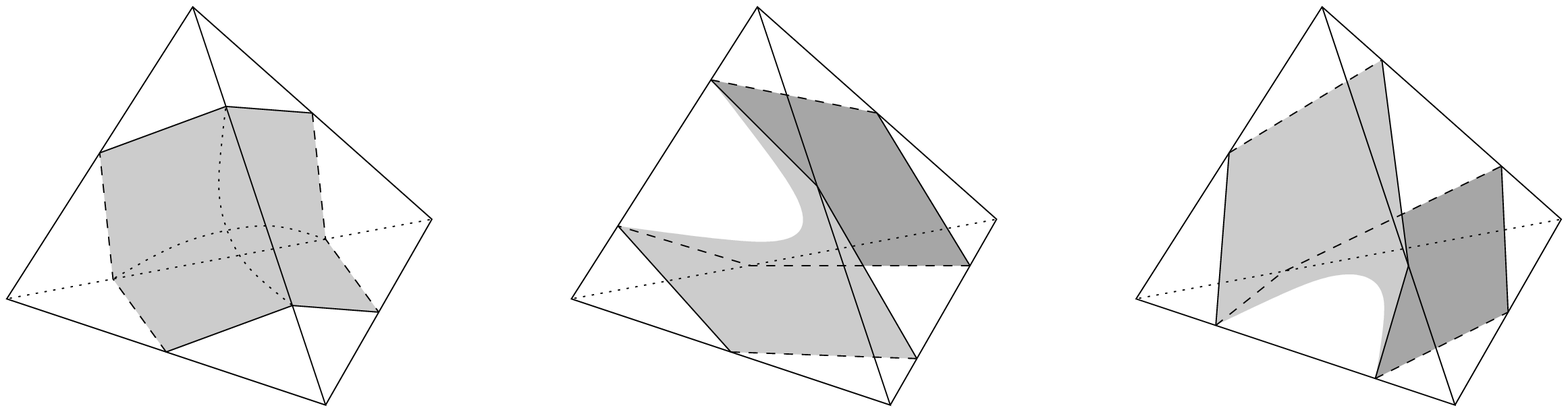}
    \caption{The three different octagon types within a tetrahedron}
    \label{fig-octtypes}
    \end{figure}

    The possible normal octagons within a tetrahedron can be grouped into three
    \emph{octagon types}, according to how many times they intersect each
    edge of the tetrahedron.  All three octagon types are illustrated
    in Figure~\ref{fig-octtypes}.
\end{definition}

As with ``embedded'', we will sometimes drop the word ``octagonal'' from
definitions to avoid excessively long names; see for instance the
\emph{standard almost normal matching equations} and
\emph{vertex almost normal surfaces}
(Definitions~\ref{d-an-vecrep} and~\ref{d-an-matching}),
which refer exclusively to octagonal almost normal surfaces.

At this early stage we can already see one reason why octagonal almost
normal surfaces are substantially easier to deal with than general almost
normal surfaces---while there are only three octagon types within a
tetrahedron, there are $25$ distinct types of normal tube, giving $28$ types
of exceptional piece in the general case.  Not only is this messier to
implement on a computer, but it can lead to significant increases
in running time and memory usage.  We return to this issue at the end of
this section.

\begin{definition}[Standard Vector Representation] \label{d-an-vecrep}
    Let $\tri$ be a compact 3-manifold triangulation formed from the $n$
    tetrahedra $\Delta_1,\ldots,\Delta_n$, and let $S$ be an octagonal
    almost normal surface in $\tri$.  For each tetrahedron $\Delta_i$, let
    $t_{i,1}$, $t_{i,2}$, $t_{i,3}$ and $t_{i,4}$ denote the number of
    triangular discs of each type, let
    $q_{i,1}$, $q_{i,2}$ and $q_{i,3}$ denote the number of
    quadrilateral discs of each type, and let
    $k_{i,1}$, $k_{i,2}$ and $k_{i,3}$ denote the number of
    octagonal discs of each type in $\Delta_i$ contained in the surface $S$.

    Then the \emph{standard vector representation} of $S$, denoted
    $\vrep{S}$, is the $10n$-dimensional vector
    \begin{alignat*}{2}
    \vrep{S}~=\,(~
        & t_{1,1},t_{1,2},t_{1,3},t_{1,4},
        \ q_{1,1},q_{1,2},q_{1,3},
        \ k_{1,1},k_{1,2}&&,k_{1,3}\ ;\\
        & t_{2,1},t_{2,2},t_{2,3},t_{2,4},
        \ q_{2,1},q_{2,2},q_{2,3},
        \ k_{2,1},k_{2,2}&&,k_{2,3}\ ; \\
        & \ldots && ,k_{n,3}\ ).
    \end{alignat*}
\end{definition}

\begin{lemma} \label{l-an-vecrep}
    Let $\tri$ be a compact 3-manifold triangulation, and let
    $S$ and $S'$ be octagonal almost normal surfaces in $\tri$.
    Then the standard vector representations
    $\vrep{S}$ and $\vrep{S'}$ are equal if and only if
    the surfaces $S$ and $S'$ are normal isotopic
    (i.e., they are the ``same'' almost normal surface).
\end{lemma}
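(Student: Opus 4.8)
The plan is to adapt the uniqueness argument for normal surfaces (Lemma~\ref{l-vecrep}) to the almost normal setting, exploiting the fact that an octagonal almost normal surface differs from a normal surface in only one tetrahedron and only by a single octagonal disc. First I would observe that $\vrep{S}$ records, for each tetrahedron $\Delta_i$, the complete list of counts $t_{i,j}$, $q_{i,j}$ and $k_{i,j}$; since an octagonal almost normal surface contains exactly one octagonal disc overall, the octagon block of $\vrep{S}$ has a single coordinate equal to $1$ and the rest equal to $0$, which pins down both the tetrahedron $\Delta_m$ containing the exceptional piece and the octagon type there. The "if" direction is immediate: normal isotopy preserves each simplex of $\tri$, hence preserves the combinatorial type and tetrahedron of every triangular, quadrilateral and octagonal disc, so it preserves all the counts and therefore $\vrep{S}=\vrep{S'}$.

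For the "only if" direction, suppose $\vrep{S}=\vrep{S'}$. As just noted, both surfaces then have their exceptional octagon in the same tetrahedron $\Delta_m$ and of the same octagon type, say $k_{m,a}=1$. The core of the argument is to build an explicit normal isotopy taking $S$ to $S'$ disc by disc. Within each tetrahedron $\Delta_i$ this is a finite, purely combinatorial matter: the disjoint normal discs of a given type inside a tetrahedron are linearly ordered (triangles of a fixed type are nested towards the corresponding vertex, quadrilaterals of a fixed type are stacked in parallel), and a normal octagon of a fixed type occupies a canonical position relative to the quadrilaterals of the two "compatible" types; so once the counts agree, there is a normal isotopy of $\Delta_i$, fixing $\partial\Delta_i$ setwise and respecting the faces, carrying the pieces of $S$ in $\Delta_i$ to the pieces of $S'$ in $\Delta_i$. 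I would then argue that these per-tetrahedron isotopies can be chosen to agree on the shared faces of $\tri$ — the matching of normal arcs across a face is determined by the arc counts, which are again encoded in $\vrep{S}=\vrep{S'}$ — and hence glue to a global normal isotopy of $\tri$ taking $S$ to $S'$. This is the standard "the surface is determined by how many discs of each type sit in each tetrahedron" argument, with the only new ingredient being the bookkeeping for the single octagon, which behaves like an ordinary normal disc for the purposes of this gluing.

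The main obstacle — really the only subtlety beyond Lemma~\ref{l-vecrep} — is verifying that inserting one octagon does not create ambiguity in the disc orderings inside $\Delta_m$. Concretely, one must check that a normal octagon of type $a$ coexists inside a tetrahedron only with quadrilaterals of the two types meeting the same pair of opposite edges, that these quadrilaterals and the octagon sit in a well-defined linear order (the octagon separating the two quadrilateral "stacks"), and that no octagon can be normally isotoped past a quadrilateral of a different relative position; this guarantees the canonical ordered picture needed to construct the isotopy. Since the quadrilateral constraints and the basic combinatorics of octagons inside a tetrahedron are already recorded in Section~\ref{s-almost}, this check is routine, and I would treat it briefly, citing Figure~\ref{fig-octtypes} for the relevant geometry. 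With that in hand, the gluing argument proceeds exactly as in the normal case, giving the desired normal isotopy between $S$ and $S'$.
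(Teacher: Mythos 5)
Your overall strategy is exactly the one the paper intends: it gives no details for this lemma, saying only that the proof is the same as for Lemma~\ref{l-vecrep}, with the key observation being that a disjoint packing of a prescribed number of triangles, quadrilaterals and octagons into a tetrahedron is unique up to normal isotopy. Your ``if'' direction, the per-tetrahedron rigidity of the packing, and the gluing across faces via the forced ordering of normal arcs all match that outline.

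One claim in your final paragraph is, however, false: a normal octagon cannot coexist disjointly with \emph{any} quadrilateral in the same tetrahedron, let alone with ``the two compatible types.'' Each quadrilateral type separates a pair of opposite edges of the tetrahedron, and every octagon type meets both edges of every such pair, so a connected octagon must cross every quadrilateral; this is precisely why condition~(i) of the quadrilateral-octagon constraints (Definition~\ref{d-an-const}) allows at most one non-zero coordinate among all six quadrilateral and octagon types in a tetrahedron. Consequently the picture you propose to verify --- an octagon separating two stacks of quadrilaterals --- never occurs. Since $S$ is embedded, the exceptional tetrahedron contains only triangles together with the single octagon, and the uniqueness of the packing there is immediate (triangles nested toward their vertices, octagon in the central region), so your argument survives; the error only means the ``subtle check'' you flag is vacuous rather than routine. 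The genuinely relevant observation, which the paper makes explicitly, is that this uniqueness of packing is what \emph{fails} for tubes (Figure~\ref{fig-tubeswitch}), which is why the lemma is stated only for octagonal almost normal surfaces.
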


This result is the almost normal counterpart to Lemma~\ref{l-vecrep}.  The
proof is the same, and so we do not present the details here.  The key
observation is that, given some number of triangles, quadrilaterals
and/or octagons of various types in a single tetrahedron, if these
discs can be packed into the tetrahedron disjointly
then this packing is unique up to normal isotopy.

This brings us to another reason why octagonal almost normal surfaces are
simpler to deal with than general almost normal surfaces.  In the general
case, this packing need not be unique.  In particular, a tube
that joins two normal discs of the same type can be interchanged with some
other normal disc of the same type without creating intersections
(see Figure~\ref{fig-tubeswitch} for an illustration).  Because of this,
the extension of Lemma~\ref{l-an-vecrep} to \emph{general} almost normal
surfaces fails to hold.

\begin{figure}[htb]
\centering
\includegraphics[scale=0.5]{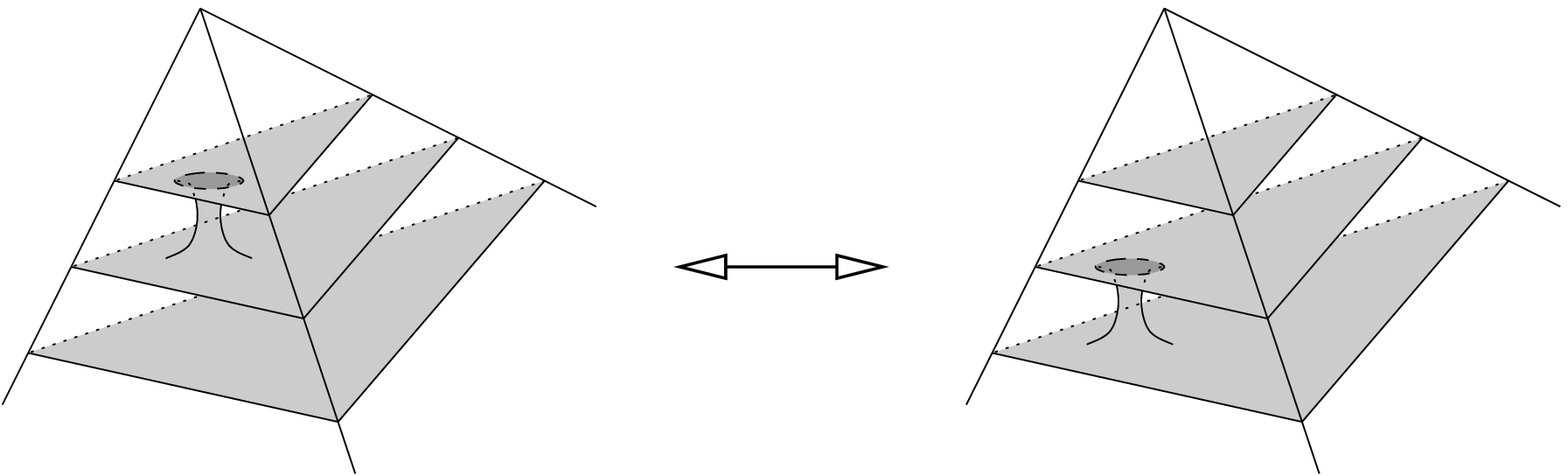}
\caption{Packing a triangle and a tube into a tetrahedron in two distinct ways}
\label{fig-tubeswitch}
\end{figure}

To determine precisely which vectors in $\R^{10n}$ represent octagonal
almost normal surfaces, we develop a set of matching equations and
quadrilateral-octagon constraints in a similar fashion to
Definitions~\ref{d-matchingstd} and~\ref{d-quadconst}.

\begin{definition}[Standard Almost Normal Matching Equations]
    \label{d-an-matching}
    Let $\tri$ be a compact 3-manifold triangulation formed from the $n$
    tetrahedra $\Delta_1,\ldots,\Delta_n$, and let $\mathbf{w} \in \R^{10n}$
    be any $10n$-dimensional vector whose coordinates we label
    \[ \mathbf{w}~=\,
    \left(\ t_{1,1},t_{1,2},t_{1,3},t_{1,4},\ q_{1,1},q_{1,2},q_{1,3},
          \ k_{1,1},k_{1,2},k_{1,3}\ ;\ \ldots,k_{n,3}\ \right). \]
    For each non-boundary face of $\tri$ and each of the three edges
    surrounding it, we obtain a
    \emph{standard almost normal matching equation} on $\mathbf{w}$ as follows.

    Let $F$ be some non-boundary face of $\tri$, and let $e$ be one
    of the three edges surrounding $F$.  Suppose that
    $\Delta_i$ and $\Delta_j$ are the two tetrahedra on either side of $F$.
    Precisely one triangular disc type, one quadrilateral
    disc type and two octagonal disc types in each of $\Delta_i$ and
    $\Delta_j$ meet $F$ in an arc parallel to $e$.
    Suppose these correspond to coordinates
    $t_{i,a}$, $q_{i,b}$, $k_{i,c}$ and $k_{i,d}$ for $\Delta_i$ and
    $t_{j,e}$, $q_{j,f}$, $k_{j,g}$ and $k_{j,h}$ for $\Delta_j$.
    Then we obtain the matching equation
    \begin{equation} \label{eqn-an-matching}
       t_{i,a} + q_{i,b} + k_{i,c} + k_{i,d} =
       t_{j,e} + q_{j,f} + k_{j,g} + k_{j,h}.
    \end{equation}
\end{definition}

These matching equations are the obvious extension to the original
standard matching equations of Definition~\ref{d-matchingstd}---we
ensure that all of the discs on one side of a non-boundary face can be
joined to all of the discs on the other side.
As with normal surfaces, if $\tri$ is a closed triangulation then there
are precisely $6n$ standard almost normal matching equations (three for each
of the $2n$ faces of $\tri$).

\begin{definition}[Quadrilateral-Octagon Constraints] \label{d-an-const}
    ~Let $\tri$ be a compact 3-manifold triangulation formed from the $n$
    tetrahedra $\Delta_1,\ldots,\Delta_n$, and consider any vector
    \[ \mathbf{w}~=\,
    \left(\ t_{1,1},t_{1,2},t_{1,3},t_{1,4},\ q_{1,1},q_{1,2},q_{1,3},
          \ k_{1,1},k_{1,2},k_{1,3}\ ;\ \ldots,k_{n,3}\ \right) \in \R^{10n}.\]
    We say that $\mathbf{w}$ satisfies the \emph{quadrilateral-octagon
    constraints} if and only if:
    \begin{enumerate}[(i)]
        \item \label{en-quadoct-local}
        For every tetrahedron $\Delta_i$,
        at most one of the six quadrilateral and octagonal coordinates
        $q_{i,1}$, $q_{i,2}$, $q_{i,3}$,
        $k_{i,1}$, $k_{i,2}$ and $k_{i,3}$ is non-zero;
        \item \label{en-quadoct-global}
        In the entire triangulation $\tri$, at most one of the
        $3n$ octagonal coordinates $k_{1,1},\ldots,k_{n,3}$ is non-zero.
    \end{enumerate}
\end{definition}

Like the quadrilateral constraints of Definition~\ref{d-quadconst},
condition~(\ref{en-quadoct-local}) of the quadrilateral-octagon constraints
ensures that the discs within a single tetrahedron can be embedded
without intersecting.  Condition~(\ref{en-quadoct-global}) ensures that we
have at most one octagon \emph{type} within a triangulation---although this
condition is not strong enough to ensure at most one octagonal \emph{disc},
it does have the useful property of invariance under scalar multiplication.

Note that a vector can still satisfy the quadrilateral-octagon
constraints even if all its octagonal coordinates are zero.
This is necessary for the vertex enumeration algorithms to function
properly; we return to this issue in Section~\ref{s-enumeration}.

We can now give a full set of necessary and sufficient conditions for a
vector in $\R^{10n}$ to represent an octagonal almost normal surface.

\begin{theorem} \label{t-an-admissible}
    Let $\tri$ be a compact 3-manifold triangulation formed from
    $n$ tetrahedra.  An integer vector $\mathbf{w} \in \R^{10n}$
    is the standard vector representation of an
    octagonal almost normal surface in $\tri$ if and only if:
    \begin{itemize}
        \item The coordinates of $\mathbf{w}$ are all non-negative;
        \item $\mathbf{w}$ satisfies the standard almost normal
        matching equations for $\tri$;
        \item $\mathbf{w}$ satisfies the quadrilateral-octagon
        constraints for $\tri$;
        \item There is precisely one non-zero octagonal coordinate in
        $\mathbf{w}$, and this coordinate is set to one.
    \end{itemize}
    Once again, such a vector is called an \emph{admissible vector}.
\end{theorem}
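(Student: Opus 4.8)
The plan is to establish both directions of the equivalence, following closely the proof structure of Theorem~\ref{t-admissible} for ordinary normal surfaces and exploiting the uniqueness result of Lemma~\ref{l-an-vecrep}. For the forward direction, suppose $\mathbf{w} = \vrep{S}$ for some octagonal almost normal surface $S$. Non-negativity is immediate since the coordinates count discs. The matching equations hold because, along each edge $e$ of a non-boundary face $F$, the normal and octagonal arcs of $S$ meeting $F$ parallel to $e$ must glue up across $F$: each such arc on the $\Delta_i$ side pairs with exactly one on the $\Delta_j$ side, and counting these arcs on each side gives precisely equation~(\ref{eqn-an-matching}) (an octagon contributes two arcs parallel to $e$, which is why two octagonal coordinates appear on each side). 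The quadrilateral-octagon constraints hold because two quadrilateral/octagonal pieces of distinct types in a single tetrahedron necessarily intersect (condition~(\ref{en-quadoct-local})), and because an octagonal almost normal surface has, by Definition~\ref{d-octan}, exactly one exceptional octagon, hence its octagonal coordinates are supported on a single type with value one (condition~(\ref{en-quadoct-global}) and the last bullet).

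For the converse, suppose $\mathbf{w}$ satisfies all four conditions. First I would use the single non-zero octagonal coordinate, say $k_{i_0, c_0} = 1$, to place one octagon of type $c_0$ in $\Delta_{i_0}$. In every tetrahedron, the local quadrilateral-octagon constraint means that at most one quadrilateral or octagonal type is used, so the $t$-, $q$- and (in $\Delta_{i_0}$) $k$-coordinates describe a disjoint packing inside each tetrahedron; by the uniqueness observation underlying Lemma~\ref{l-an-vecrep}, this packing exists and is unique up to normal isotopy. It remains to check that these per-tetrahedron pieces glue together across every non-boundary face to form a properly embedded surface. This is exactly what the matching equations deliver: across each face $F$, for each of its three edges, the number of arcs on the $\Delta_i$ side matching arcs parallel to that edge equals the number on the $\Delta_j$ side, so the arcs can be matched up in pairs; doing this consistently for all three edge-directions of $F$ and across all faces produces a closed $1$-manifold of gluings and hence a surface. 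The result is a normal surface together with exactly one octagon, i.e.\ an octagonal almost normal surface $S$ with $\vrep{S} = \mathbf{w}$.

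The main obstacle is the gluing step in the converse direction: one must argue that the arc-matching across each face can be chosen coherently so that the pieces assemble into an embedded surface rather than merely an abstract complex, and in particular that the presence of the octagon does not obstruct this. The cleanest way is to observe that this is not genuinely new---the standard matching equations of Definition~\ref{d-matchingstd} already handle the gluing of triangles and quadrilaterals, and equation~(\ref{eqn-an-matching}) is the verbatim analogue once one notes that an octagon meeting $F$ contributes arcs parallel to two of the three edges of $F$ (and to none parallel to the third), so the bookkeeping of arcs per edge-direction is still balanced. Thus I would explicitly invoke Theorem~\ref{t-admissible}: after deleting the octagon's contribution, the remaining coordinates satisfy the ordinary standard matching equations, yielding a genuine normal surface; then I reinsert the octagon in $\Delta_{i_0}$, checking that its eight arcs match the adjacent normal arcs across the four faces of $\Delta_{i_0}$ precisely because equation~(\ref{eqn-an-matching}) was satisfied on those faces. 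A minor point to dispatch is boundary faces, where no matching equation is imposed and the surface is simply allowed to have boundary on $\partial\tri$, exactly as for normal surfaces.
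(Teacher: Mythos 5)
Your overall strategy matches the paper's, which simply defers to Theorem~\ref{t-admissible} and notes that the only new ingredients are the global octagon conditions; your forward direction and your direct gluing argument in the second paragraph (equal counts of arcs parallel to each edge on either side of each non-boundary face, arcs of a given type being nested so the gluing is forced) are exactly the intended argument, and they are correct. One small slip in the forward direction: a single octagon does not contribute two arcs parallel to $e$ on a face $F$; it contributes two arcs to $F$ that are parallel to two \emph{different} edges of $F$. The reason two octagonal coordinates appear on each side of equation~(\ref{eqn-an-matching}) is that two of the three octagon \emph{types} each contribute one arc parallel to $e$ (with coefficient $1$, not $2$). You state the correct geometry yourself in your final paragraph.

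The step that would fail is the reduction you propose in the last paragraph: it is not true that ``after deleting the octagon's contribution, the remaining coordinates satisfy the ordinary standard matching equations.'' If the octagon of type $c_0$ sits in $\Delta_{i_0}$, then for each non-boundary face $F$ of $\Delta_{i_0}$ and for two of the three edges of $F$, the coordinate $k_{i_0,c_0}$ appears on the $\Delta_{i_0}$ side of equation~(\ref{eqn-an-matching}) while all octagonal coordinates on the other side vanish; setting $k_{i_0,c_0}$ to zero therefore unbalances those equations by $1$. There is no normal disc type with the same arc pattern as an octagon, so you cannot substitute something for it and invoke Theorem~\ref{t-admissible} verbatim. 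The remedy is simply to drop this shortcut and let the direct argument of your second paragraph carry the converse: the matching equations are precisely the statement that, for every non-boundary face and every edge-direction, the total number of boundary arcs (coming from triangles, quadrilaterals \emph{and} the octagon) agrees on both sides, and the standard nested-arc gluing argument then assembles the uniquely packed pieces into a properly embedded surface, with the last two admissibility conditions guaranteeing exactly one exceptional octagon as Definition~\ref{d-an} requires.
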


Again the proof is essentially the same as for the corresponding theorem
in normal surface theory (Theorem~\ref{t-admissible}), and so we do not
reiterate the details here.  The only difference is that we now have a
global condition in the quadrilateral-octagon constraints (at most one
non-zero octagonal coordinate in the entire triangulation),
as well as an extra constraint for
admissibility (precisely one non-zero octagonal coordinate with value one).
These are to satisfy Definition~\ref{d-an}, which requires an
almost normal surface to have precisely one exceptional piece.

It is occasionally useful to consider surfaces with any number of
octagonal discs, though still at most one octagonal disc \emph{type}.
In this case the vector representation, matching equations and
quadrilateral-octagon constraints all remain the same; the only change
appears in Theorem~\ref{t-an-admissible}, where we remove the final
condition (the one that requires a unique non-zero octagonal coordinate
with a value of one).

We finish by defining a vertex almost normal surface in a similar
fashion to Definition~\ref{d-vertex}.  We are careful here to specify our
coordinate system---in Section~\ref{s-quadoct} we define a similar concept
in {\quadoct} coordinates, and (as with normal surfaces)
a vertex surface in one coordinate system need not be a vertex surface
in another.

\begin{definition}[Standard Vertex Almost Normal Surface]
    Let $\tri$ be a compact 3-ma\-ni\-fold triangulation formed from $n$
    tetrahedra.
    The \emph{standard almost normal projective solution space}
    for $\tri$ is the rational polytope formed by (i)~taking the
    polyhedral cone of all non-negative vectors in $\R^{10n}$ that satisfy
    the standard almost normal matching equations for $\tri$, and
    then (ii)~intersecting this cone with the
    hyperplane $\{\mathbf{w} \in \R^{10n}\,|\,\sum w_i = 1\}$.

    Let $S$ be an octagonal almost normal surface in $\tri$.
    If the standard vector representation $\vrep{S}$ is a
    positive multiple of some vertex of the standard almost normal projective
    solution space, then we call $S$ a
    \emph{standard vertex almost normal surface}.
\end{definition}

As with normal surfaces, we can use the enumeration of vertex almost
normal surfaces as a basis for high-level topological algorithms.
The streamlined 3-sphere recognition of Jaco, Rubinstein
and Thompson \cite{jaco03-0-efficiency} does just this---given a
``sufficiently nice'' 3-manifold triangulation $\tri$, we (i)~enumerate all
standard vertex almost normal surfaces within $\tri$, and then (ii)~search
amongst these vertex surfaces for an almost normal 2-sphere.  We return
to this algorithm in detail in Section~\ref{s-sphere}.

This suggests yet another reason to prefer octagonal almost normal surfaces
over general almost normal surfaces.
Whereas octagonal almost normal surfaces have
$10n$-dimensional vector representations, in the general case we would need
$35n$ dimensions (allowing for $25$ types of tube
in addition to the ten octagons, quadrilaterals and
triangles in each tetrahedron).  Since both the running time and memory usage
for vertex enumeration can grow exponential in the dimension of the
underlying vector space \cite{burton08-dd}, increasing this dimension
from $10n$ to $35n$ could well have a crippling effect on
performance.\footnote{We can avoid a $35n$-dimensional vertex
enumeration by exploiting the fact that every tube corresponds to a pair
of normal discs.  However, the enumeration algorithm becomes significantly
more complex as a result.}

\section{{\QuadOct} Coordinates} \label{s-quadoct}

At this stage we are ready to develop {\quadoct} coordinates, which form
the main focus of this paper.  {\Quadoct} coordinates act as an almost
normal analogy to Tollefson's quadrilateral coordinates,
in that we ``forget'' all information regarding triangular discs.  As with
quadrilateral coordinates, we happily find that---except for
vertex linking components---all of the forgotten information can be
successfully recovered.

The main results of this section are (i)~to show that vectors in {\quadoct}
coordinates uniquely identify surfaces up to vertex linking components
(Lemma~\ref{l-qo-vecrep}), and (ii)~to develop a set of necessary and
sufficient conditions for a vector in {\quadoct} coordinates to represent an
octagonal almost normal surface (Theorem~\ref{t-qo-admissible}).
Although these mirror Tollefson's original results in quadrilateral
coordinates, the proofs follow a different course---in this sense the
author hopes that this paper and Tollefson's paper
\cite{tollefson98-quadspace} make complementary reading.

\begin{definition}[{\QuadOct} Vector Representation] \label{d-qo-vecrep}
    Let $\tri$ be a compact 3-ma\-ni\-fold triangulation formed from the $n$
    tetrahedra $\Delta_1,\ldots,\Delta_n$, and let $S$ be an octagonal
    almost normal surface in $\tri$.  For each tetrahedron $\Delta_i$, let
    $q_{i,1}$, $q_{i,2}$ and $q_{i,3}$ denote the number of
    quadrilateral discs of each type, and let
    $k_{i,1}$, $k_{i,2}$ and $k_{i,3}$ denote the number of
    octagonal discs of each type in $\Delta_i$ contained in the surface $S$.

    Then the \emph{{\quadoct} vector representation} of $S$, denoted
    $\krep{S}$, is the $6n$-{\dimfix} vector
    \[ \krep{S}~=\,\left(~q_{1,1},q_{1,2},q_{1,3},
        \ k_{1,1},k_{1,2},k_{1,3}\ ;
        \ q_{2,1},q_{2,2},q_{2,3},
        \ k_{2,1},k_{2,2},k_{2,3}\ ;
        \ \ldots,k_{n,3}\ \right).
    \]
\end{definition}

Our first result in {\quadoct} coordinates is a uniqueness lemma,
analogous to Lemma~\ref{l-vecrep} for normal surfaces and
Lemma~\ref{l-an-vecrep} for standard almost normal coordinates.

\begin{lemma} \label{l-qo-vecrep}
    Let $\tri$ be a compact 3-manifold triangulation, and let
    $S$ and $S'$ be octagonal almost normal surfaces in $\tri$.
    Then the {\quadoct} vector representations
    $\krep{S}$ and $\krep{S'}$ are equal if and only if
    either (i)~the surfaces $S$ and $S'$ are normal isotopic,
    or (ii)~$S$ and $S'$ can be made normal isotopic by adding or
    removing vertex linking components.
\end{lemma}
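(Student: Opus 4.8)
The plan is to reduce Lemma~\ref{l-qo-vecrep} to the already-established uniqueness result in standard almost normal coordinates (Lemma~\ref{l-an-vecrep}), by showing that the quadrilateral and octagonal coordinates of an octagonal almost normal surface determine its triangular coordinates \emph{up to the freedom to add or remove vertex linking components}. One direction is immediate: if $S$ and $S'$ are normal isotopic, or become so after adding/removing vertex links, then since vertex links contain only triangular discs their $\qrep{\cdot}$-parts and $\krep{\cdot}$-parts are unaffected, so $\krep{S} = \krep{S'}$. The substance is the converse. Assume $\krep{S} = \krep{S'}$, so $S$ and $S'$ have identical quadrilateral and octagonal coordinates in every tetrahedron; I must produce a chain of vertex-link additions/removals after which the standard vector representations agree, and then invoke Lemma~\ref{l-an-vecrep}.

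First I would fix a tetrahedron $\Delta_i$ and analyse how the triangular coordinates $t_{i,1},\dots,t_{i,4}$ are constrained once the quadrilateral and octagonal coordinates are known. The key local fact is that within a tetrahedron containing a prescribed (fixed) collection of quadrilateral and octagonal discs, the triangular discs of any one of the four types can be stacked in a ``corner'' and their number is unconstrained locally; it is only the global matching equations (\ref{eqn-an-matching}) across faces that tie the triangle counts in adjacent tetrahedra together. So the real object of study is the affine space of solutions $\mathbf{t} = (t_{i,j})$ to the standard almost normal matching equations with the $q$- and $k$-coordinates held fixed at their (common) values for $S$ and $S'$. I would show that this solution set, restricted to \emph{integer} points that actually arise from surfaces, is a single coset of the lattice spanned by the vertex-link vectors. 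Concretely: the difference $\mathbf{v}(S) - \mathbf{v}(S')$ has zero in every $q$- and $k$-coordinate, and lies in the kernel of the matching-equation system; I then need to identify this kernel-restricted-to-triangle-coordinates with the span of the vectors $\mathbf{v}(\ell(V))$ over vertices $V$ of $\tri$.

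The main obstacle, I expect, is exactly this last identification — proving that every integer solution of the matching equations that is supported only on triangular coordinates, and that is realisable (nonnegative, arises from an actual normal surface) is an integer combination of vertex links. This is the almost-normal analogue of the corresponding step in Tollefson's proof and in \cite{burton09-convert}, and the octagons genuinely interact here: an octagon disc in $\Delta_i$ occupies certain corners of $\Delta_i$ and so changes which triangular disc types can coexist with it, which in turn affects the local picture near the relevant vertices. I would handle this by working one vertex link at a time: given the difference vector, use the matching equations around the edges emanating from a vertex $V$ to peel off an appropriate multiple of $\mathbf{v}(\ell(V))$, reducing the number of nonzero triangular coordinates, and induct. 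One must check that the octagon (being unique and of a single type, by the quadrilateral-octagon constraints, and identical for $S$ and $S'$) does not obstruct this peeling — intuitively it cannot, since the octagon data is common to both surfaces and the difference vector is supported away from it.

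Finally, once I have written $\mathbf{v}(S) - \mathbf{v}(S')$ as an integer combination $\sum_V c_V \mathbf{v}(\ell(V))$, I would separate the positive and negative coefficients: adding $\ell(V)$ with multiplicity for the negative $c_V$ to $S'$ and for the positive $c_V$ to $S$ yields two surfaces with equal standard vector representations, hence normal isotopic by Lemma~\ref{l-an-vecrep}; this exhibits $S$ and $S'$ as related by adding/removing vertex linking components, which is conclusion~(ii) (or~(i) if all $c_V = 0$). I would remark that throughout, nonnegativity must be monitored so that every intermediate surface is genuine — but since we are free to add vertex links to \emph{both} sides, we can always arrange the coefficients to be handled in a nonnegativity-preserving order, so this is a routine bookkeeping matter rather than a real difficulty.
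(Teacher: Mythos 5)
Your proposal is correct and follows essentially the same route as the paper: the easy direction via the fact that vertex links are supported only on triangular coordinates, and the converse by forming the difference $\vrep{S}-\vrep{S'}$, observing that its quadrilateral and octagonal coordinates vanish so that the standard almost normal matching equations collapse to equalities $t_{i,a}=t_{j,e}$ between adjacent triangle coordinates around each vertex, whence the difference is a combination of vertex-link vectors and Lemma~\ref{l-an-vecrep} finishes. The one remark worth making is that the step you flag as the main obstacle is in fact immediate in the paper: because the difference vector is supported on triangular coordinates only, the octagon cannot interact at all, and connectivity of each (sphere or disc) vertex link yields the identification with the span of the vertex-link vectors directly, with no need for a peeling induction or for restricting to nonnegative realisable solutions.
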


\begin{proof}
    The ``if'' direction is straightforward.  If $S$ and $S'$ are normal
    isotopic or can be made so by adding or removing vertex linking
    components, it follows from Lemma~\ref{l-an-vecrep} that their
    \emph{standard} vector representations $\vrep{S}$ and $\vrep{S'}$
    differ only in their triangular coordinates (since
    vertex links consist entirely of triangular discs).
    Therefore the quadrilateral and octagonal coordinates
    are identical in both $\vrep{S}$ and $\vrep{S'}$,
    and we have $\krep{S}=\krep{S'}$.

    For the ``only if'' direction, suppose that $\krep{S}=\krep{S'}$.
    Let $\mathbf{d} = \vrep{S} - \vrep{S'}$ in standard almost normal
    coordinates; it follows then that
    \[ \mathbf{d}~=\,
    \left(\ t_{1,1},t_{1,2},t_{1,3},t_{1,4},\ 0,0,0,\ 0,0,0\ ;
          \ t_{2,1},t_{2,2},t_{2,3},t_{2,4},\ 0,0,0,\ 0,0,0\ ;
          \ \ldots\ \right) \in \R^{10n}\]
    for some set of triangular coordinates $\{t_{i,j}\}$.  In other words, all
    of the quadrilateral and octagonal coordinates of $\mathbf{d}$ are zero.

    We know from Theorem~\ref{t-admissible} that $\vrep{S}$ and $\vrep{S'}$
    both satisfy the standard almost normal matching equations, and because
    these equations are linear it follows that $\mathbf{d}$ satisfies them
    also.  However, with the quadrilateral and octagonal coordinates
    of $\mathbf{d}$ equal to zero, we find that each matching equation
    (\ref{eqn-an-matching}) reduces to the form
    $t_{i,a} = t_{j,e}$, where $t_{i,a}$ and $t_{j,e}$ represent
    triangular disc types surrounding a common vertex of the triangulation
    in adjacent tetrahedra (illustrated in Figure~\ref{fig-adjtri}).

    \begin{figure}[htb]
    \centering
    \includegraphics{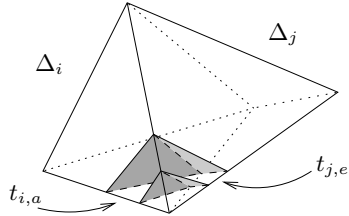}
    \caption{Adjacent triangles surrounding a common vertex}
    \label{fig-adjtri}
    \end{figure}

    By following these matching equations around each vertex of the
    triangulation $\tri$, we find that for each vertex $V$ of $\tri$, the
    coordinates $\{t_{i,j}\}$ for all triangular disc types surrounding $V$
    are equal.
    That is, $\mathbf{d} = \vrep{S} - \vrep{S'}$ is a linear combination
    of standard almost normal vector representations of vertex links.
    It follows then from Theorem~\ref{t-an-admissible} that the surfaces
    $S$ and $S'$ can be made normal isotopic only by adding or removing
    vertex linking components.\footnote{It is important to realise that
    we \emph{can} in fact add vertex linking components to an arbitrary
    surface without causing intersections.  This is possible because we can
    ``shrink'' a vertex link arbitrarily close to the vertex that it
    surrounds, allowing us to avoid any other normal or almost normal discs.}
\end{proof}

Following the pattern established in previous sections, we now turn our
attention to building a set of necessary and sufficient conditions for a
$6n$-dimensional vector to represent an almost normal surface in
{\quadoct} coordinates.  These conditions include a set of matching
equations modelled on the original quadrilateral matching equations
of Tollefson (Definition~\ref{d-qo-matching}),
and a recasting of the quadrilateral-octagon
constraints in $6n$ dimensions (Definition~\ref{d-qo-const}).
The full set of necessary and sufficient conditions is laid down and
proven in Theorem~\ref{t-qo-admissible}.

\begin{definition}[{\QuadOct} Matching Equations] \label{d-qo-matching}
    Let $\tri$ be a compact 3-ma\-ni\-fold triangulation formed from the $n$
    tetrahedra $\Delta_1,\ldots,\Delta_n$, and let $\mathbf{w} \in \R^{6n}$
    be any $6n$-dimensional vector whose coordinates we label
    \[ \mathbf{w}~=\,
        \left(\ q_{1,1},q_{1,2},q_{1,3},\ k_{1,1},k_{1,2},k_{1,3}\ ;
        \ \ldots,k_{n,3}\ \right).\]
    For each non-boundary edge of $\tri$, we obtain a
    \emph{{\quadoct} matching equation} on $\mathbf{w}$ as follows.

    Let $e$ be some non-boundary edge of $\tri$.  As with Tollefson's
    original quadrilateral matching equations, we arbitrarily label
    the two ends of $e$ as \emph{upper} and \emph{lower}.
    The tetrahedra containing edge $e$ are arranged in a cycle around $e$,
    as illustrated in the leftmost diagram of
    Figure~\ref{fig-matchingquadoct}.  Choose some arbitrary
    direction around this cycle, and suppose that the tetrahedra that we
    encounter as we travel in this direction around the cycle are labelled
    $\Delta_{i_1},\ldots,\Delta_{i_t}$.

    \begin{figure}[htb]
    \centering
    \includegraphics[scale=0.8]{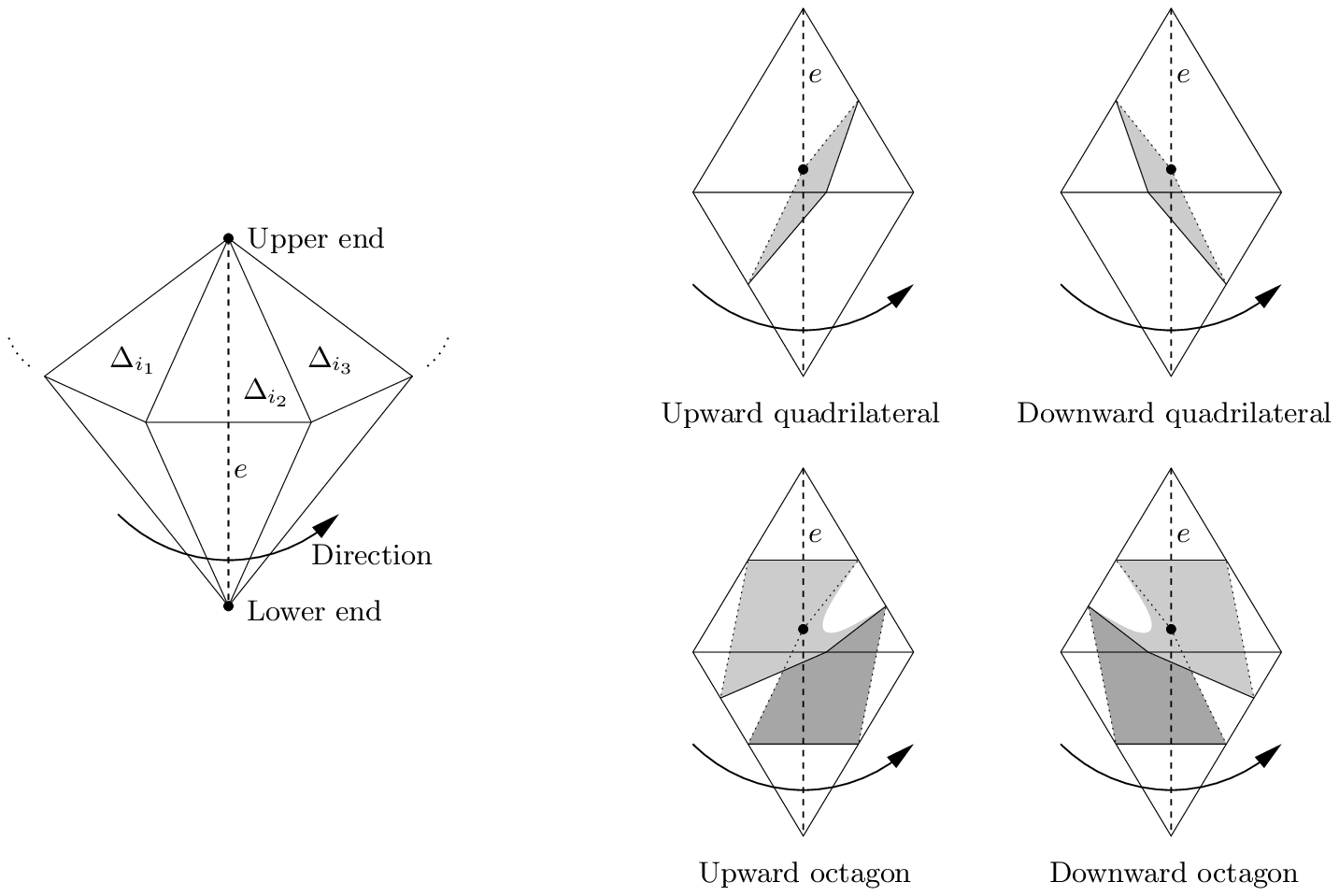}
    \caption{Building the {\quadoct} matching equations}
    \label{fig-matchingquadoct}
    \end{figure}

    Consider any tetrahedron $\Delta_{i_j}$ in this cycle.
    Within this tetrahedron, there are two quadrilateral types and two
    octagon types that meet edge $e$ precisely once.  For one
    quadrilateral and one octagon type, the intersection with $e$
    acts as a ``hinge'' about which two adjacent edges of the disc rise
    from the lower end of $e$ to the upper end of $e$ as we travel around
    the cycle in the chosen direction.  We call these disc types the
    \emph{upward quadrilateral} and the \emph{upward octagon} in
    $\Delta_{i_j}$, and we call the remaining two disc types the
    \emph{downward quadrilateral} and the \emph{downward octagon}
    in $\Delta_{i_j}$.
    All four disc types are illustrated in the rightmost portion of
    Figure~\ref{fig-matchingquadoct}.

    Suppose now that the coordinates corresponding to the upward
    quadrilateral and octagon types are
    $q_{i_1,u_1},q_{i_2,u_2},\ldots,q_{i_t,u_t}$ and
    $k_{i_1,u_1'},k_{i_2,u_2'},\ldots,k_{i_t,u_t'}$ respectively,
    and that the coordinates corresponding to the downward quadrilateral
    and octagon types are
    $q_{i_1,d_1},q_{i_2,d_2},\ldots,q_{i_t,d_t}$ and
    $k_{i_1,d_1'},\allowbreak k_{i_2,d_2'},\ldots,k_{i_t,d_t'}$
    respectively.\footnote{If we number the quadrilateral and octagon
    types within each tetrahedron in a natural way, we find that $u_j' = d_j$
    and $d_j' = u_j$ for each $j$.  That is, our numbering scheme associates
    each upward quadrilateral type with a downward octagon type and vice
    versa.  We return to this matter in Section~\ref{s-joint}.}
    Then we obtain the matching equation
    \begin{equation} \label{eqn-qo-matching}
       q_{i_1,u_1} + \ldots + q_{i_t,u_t} +
       k_{i_1,u_1'} + \ldots + k_{i_t,u_t'} =
       q_{i_1,d_1} + \ldots + q_{i_t,d_t} +
       k_{i_1,d_1'} + \ldots + k_{i_t,d_t'} .
    \end{equation}
    In other words, the total number of upward quadrilaterals and octagons
    surrounding $e$ equals the total number of downward quadrilaterals
    and octagons surrounding $e$.
\end{definition}

Note that each tetrahedron surrounding $e$ contains a third quadrilateral
type and a third octagon type, neither of which appears in
equation~(\ref{eqn-qo-matching}).  The third quadrilateral type is missing
because it does not intersect with the edge $e$ at all.  The third
octagon type is missing because, although it intersects $e$ twice, these
intersections behave in a similar fashion to two triangular discs (one
at each end of $e$).  Details can be found in the proof of
Theorem~\ref{t-qo-admissible}.

As with Tollefson's original quadrilateral matching equations, if our
triangulation $\tri$ is closed and has precisely $v$ vertices then we
obtain a total of $n+v$ {\quadoct} matching equations (one for each of the
$n+v$ edges of $\tri$).

\begin{definition}[{\QuadOct} Constraints] \label{d-qo-const}
    ~Let $\tri$ be a compact 3-manifold triangulation formed from the $n$
    tetrahedra $\Delta_1,\ldots,\Delta_n$, and consider any vector
    \[ \mathbf{w}~=\,
        \left(\ q_{1,1},q_{1,2},q_{1,3},\ k_{1,1},k_{1,2},k_{1,3}\ ;
        \ \ldots,k_{n,3}\ \right) \in \R^{6n}.\]
    We say that $\mathbf{w}$ satisfies the \emph{quadrilateral-octagon
    constraints} if and only if:
    \begin{enumerate}[(i)]
        \item \label{en-qo-quadoct-local}
        For every tetrahedron $\Delta_i$,
        at most one of the six quadrilateral and octagonal coordinates
        $q_{i,1}$, $q_{i,2}$, $q_{i,3}$,
        $k_{i,1}$, $k_{i,2}$ and $k_{i,3}$ is non-zero;
        \item \label{en-qo-quadoct-global}
        In the entire triangulation $\tri$, at most one of the
        $3n$ octagonal coordinates $k_{1,1},\ldots,k_{n,3}$ is non-zero.
    \end{enumerate}
\end{definition}

Note that Definition~\ref{d-qo-const} is essentially a direct copy of
the quadrilateral-octagon constraints for standard almost normal coordinates
(Definition~\ref{d-an-const}), merely recast in $6n$ dimensions
instead of $10n$.

We can now describe the full set of necessary and sufficient conditions
for a vector to represent an almost normal surface in {\quadoct}
coordinates.  The resulting theorem incorporates aspects of both
Theorem~\ref{t-admissible} (which uses Tollefson's original quadrilateral
matching equations) and Theorem~\ref{t-an-admissible} (which introduces
the quadrilateral-octagon constraints).

\begin{theorem} \label{t-qo-admissible}
    Let $\tri$ be a compact 3-manifold triangulation formed from
    $n$ tetrahedra.  An integer vector $\mathbf{w} \in \R^{6n}$
    is the {\quadoct} vector representation of an
    octagonal almost normal surface in $\tri$ if and only if:
    \begin{itemize}
        \item The coordinates of $\mathbf{w}$ are all non-negative;
        \item $\mathbf{w}$ satisfies the {\quadoct}
        matching equations for $\tri$;
        \item $\mathbf{w}$ satisfies the quadrilateral-octagon
        constraints for $\tri$;
        \item There is precisely one non-zero octagonal coordinate in
        $\mathbf{w}$, and this coordinate is set to one.
    \end{itemize}
    Yet again, such a vector is called an \emph{admissible vector}.
\end{theorem}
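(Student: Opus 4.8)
The plan is to prove Theorem~\ref{t-qo-admissible} by reducing it to the already-established standard almost normal result (Theorem~\ref{t-an-admissible}) via a projection/lifting argument, just as Tollefson's original quadrilateral theory is related to Haken's. The ``only if'' direction is routine: if $S$ is an octagonal almost normal surface, then $\vrep{S}$ is admissible in $\R^{10n}$ by Theorem~\ref{t-an-admissible}, and $\krep{S}$ is obtained from $\vrep{S}$ by deleting the triangular coordinates. Non-negativity, the quadrilateral-octagon constraints, and the unique-octagon-coordinate-equal-to-one condition all survive this deletion verbatim. The only thing to check is that the standard almost normal matching equations (\ref{eqn-an-matching}), after deletion of triangles, imply the {\quadoct} matching equations (\ref{eqn-qo-matching}); this is the ``telescoping around an edge'' computation that I would carry out by summing the standard equations for the faces encountered as one travels around the cycle of tetrahedra about $e$, so that the triangular terms cancel in pairs and the octagonal terms sort themselves into the upward/downward classification described in Definition~\ref{d-qo-matching}. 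The subtlety flagged in the text --- that the ``third'' octagon type meeting $e$ twice behaves like two triangles --- is exactly what makes those terms cancel rather than contribute.

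For the ``if'' direction, suppose $\mathbf{w} \in \R^{6n}$ satisfies the four listed conditions. I want to build a triangular part $\mathbf{t} \in \R^{4n}$ (non-negative integers) so that the combined vector $\mathbf{v} = (\mathbf{t}, \mathbf{w}) \in \R^{10n}$ is admissible in the sense of Theorem~\ref{t-an-admissible}; then that theorem produces an octagonal almost normal surface $S$ with $\vrep{S} = \mathbf{v}$, whence $\krep{S} = \mathbf{w}$ and we are done. The combined vector automatically inherits non-negativity (once $\mathbf{t} \geq 0$), the quadrilateral-octagon constraints, and the unique-octagon condition, since none of these involves triangular coordinates. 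So the entire content is: choose $\mathbf{t}$ so that the standard almost normal matching equations (\ref{eqn-an-matching}) hold. Fixing the quadrilateral and octagonal coordinates, equation~(\ref{eqn-an-matching}) becomes, for each (face, edge) pair, an equation of the form $t_{i,a} - t_{j,e} = c$ where $c$ is a known constant determined by $\mathbf{w}$. These equations link triangular coordinates for triangle types surrounding a common vertex of $\tri$ in adjacent tetrahedra, so for each vertex $V$ they form a system on the ``triangle coordinates around $V$''; the underlying combinatorial structure is a connected graph (the link of $V$, or rather its dual), and such a difference system on a connected graph is solvable in integers precisely when it is consistent around every cycle. The {\quadoct} matching equation for an edge $e$ is exactly the consistency (zero-sum-around-the-cycle) condition for the cycle of triangle coordinates encircling $e$ --- this is the reverse of the telescoping computation above --- so the hypothesis that $\mathbf{w}$ satisfies the {\quadoct} matching equations guarantees a solution $\mathbf{t}$ in $\Z$. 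The remaining worry is whether $\mathbf{t}$ can be chosen non-negative: the difference equations only pin down each ``around-$V$'' block of triangle coordinates up to a common additive constant, and adding a large positive constant to every triangle coordinate surrounding a given vertex $V$ corresponds exactly to adding copies of the vertex link $\ell(V)$, which preserves all the matching equations and constraints. So I can always shift each block up until all entries are non-negative integers.

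The main obstacle I expect is the bookkeeping in the telescoping/untelescoping correspondence between the standard matching equations around an edge and the single {\quadoct} matching equation for that edge --- in particular, verifying carefully that when a tetrahedron appears more than once in the cycle around $e$ the disc types still match up correctly, and that the ``hinge'' description of upward versus downward octagons (and the footnote's remark that $u_j' = d_j$) is consistent with which octagon coordinate appears on which side of (\ref{eqn-an-matching}) when it is summed around the cycle. I would handle this by working locally: fix one tetrahedron $\Delta$ meeting $e$, identify the two faces of $\Delta$ containing $e$, and tabulate for each of the three octagon types and two relevant quadrilateral types whether its arcs parallel to $e$ on those two faces are ``rising'' or ``falling''; the third octagon type will contribute one rising and one falling arc (the ``two-triangle'' behaviour), so it drops out of the summed equation, while the other two octagon types and the two quadrilateral types each contribute coherently to the upward or downward side. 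Once this local table is in hand, the global telescoping is mechanical and the theorem follows.
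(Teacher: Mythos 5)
Your proposal follows essentially the same route as the paper's proof: the ``only if'' direction is the telescoping sum of the standard almost normal matching equations around the cycle of tetrahedra containing $e$, with the triangular terms and the ``third'' octagon terms cancelling in pairs; the ``if'' direction writes the unknown triangular coordinates on the triangulated vertex links, turns each standard matching equation into a prescribed difference across an edge of a vertex link, and uses the {\quadoct} matching equations as the consistency condition, with non-negativity recovered by shifting each vertex-link block by a constant (i.e., adding vertex linking components). The paper phrases the consistency argument cohomologically (the differences define a cochain $\alpha \in C^1(\dualtri)$ on the dual decomposition of the vertex links; the {\quadoct} equations say $\alpha$ is a cocycle; solvability means $\alpha$ is a coboundary), but that is only a repackaging of your difference-system-on-a-graph formulation.

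One step you assert without justification, and which is the genuinely topological point of the proof: a difference system on a connected graph is solvable iff it is consistent around \emph{every} cycle, yet the {\quadoct} matching equations only give you consistency around the particular cycles that encircle the non-boundary vertices of a vertex link (equivalently, the boundaries of the 2-cells of the dual decomposition). These elementary cycles generate the full cycle space of the graph precisely because each vertex link of a compact triangulation is a sphere or a disc, so that $H^1$ of the link is trivial. This is exactly where the compactness hypothesis enters, and it is not a formality: for an ideal triangulation the vertex links have higher genus, the elementary cycles no longer generate everything, and the conclusion fails (this is why spun normal surfaces arise there). Your write-up should make this generation step explicit rather than passing directly from ``consistent around each edge-cycle'' to ``solvable''. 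With that addition the argument is complete and matches the paper's.
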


\begin{proof}
    We begin by showing that the four conditions listed in
    Theorem~\ref{t-qo-admissible} are necessary.
    Let $S$ be some octagonal almost normal surface in $\tri$.
    It is clear from Theorem~\ref{t-an-admissible} that the
    {\quadoct} vector representation $\krep{S}$
    is a non-negative vector that satisfies the quadrilateral-octagon
    constraints, and that there is precisely one non-zero octagonal
    coordinate in $\krep{S}$ whose value is set to one.
    All that remains then is to show that $\krep{S}$ satisfies the
    {\quadoct} matching equations, which is a simple matter of combining
    the \emph{standard} almost normal matching equations appropriately.
    The details are as follows.

    Suppose that $S$ has standard vector representation
    \[ \vrep{S}~=\,
        \left(\ t_{1,1},t_{1,2},t_{1,3},t_{1,4},\ q_{1,1},q_{1,2},q_{1,3},
        \ k_{1,1},k_{1,2},k_{1,3}\ ;\ \ldots,k_{n,3}\ \right). \]
    Let $e$ be any non-boundary edge of $\tri$, and arbitrarily label
    the two ends of $e$ as \emph{upper} and \emph{lower}.  Following
    Definition~\ref{d-qo-matching}, let the tetrahedra containing $e$
    be labelled $\Delta_{i_1},\ldots,\Delta_{i_t}$ as we cycle in some
    arbitrary direction around $e$,
    let coordinates $q_{i_1,u_1},q_{i_2,u_2},\ldots,q_{i_t,u_t}$ and
    $k_{i_1,u_1'},k_{i_2,u_2'},\ldots,k_{i_t,u_t'}$ correspond to the
    upward quadrilateral and octagon types, and let coordinates
    $q_{i_1,d_1},q_{i_2,d_2},\ldots,q_{i_t,d_t}$ and
    $k_{i_1,d_1'},k_{i_2,d_2'},\ldots,k_{i_t,d_t'}$ correspond to the
    downward quadrilateral and octagon types.

    We continue labelling coordinates as follows.
    Suppose that $t_{i_1,a_1},t_{i_1,a_2},\ldots,t_{i_1,a_t}$
    correspond to the triangular disc types surrounding the upper end of
    $e$, as illustrated in the left-hand portion of Figure~\ref{fig-uppertri}.
    Furthermore, suppose that $k_{i_1,b_1},k_{i_2,b_2},\ldots,k_{i_t,b_t}$
    correspond to the octagonal disc types in each tetrahedron that are
    neither upward nor downward octagons, as illustrated in the right-hand
    portion of Figure~\ref{fig-uppertri}.

    \begin{figure}[htb]
    \centering
    \includegraphics[scale=0.8]{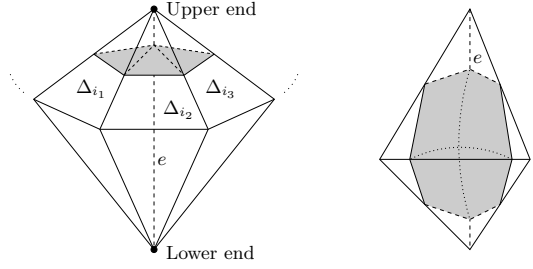}
    \caption{Triangles and octagons for the coordinates
        $t_{i_j,a_j}$ and $k_{i_j,b_j}$}
    \label{fig-uppertri}
    \end{figure}

    Calling on Theorem~\ref{t-an-admissible} again, we know that
    $\vrep{S}$ satisfies the standard almost normal matching equations
    (Definition~\ref{d-an-matching}).  Amongst those matching equations
    that involve the adjacent pairs of tetrahedra
    $(\Delta_{i_1},\Delta_{i_2})$,
    $(\Delta_{i_2},\Delta_{i_3})$, \ldots,
    $(\Delta_{i_t},\Delta_{i_1})$,
    we find the $t$ equations
    \begin{equation}\begin{split}
        t_{i_1,a_1} + q_{i_1,u_1} + k_{i_1,u_1'} + k_{i_1,b_1} &=
        t_{i_2,a_2} + q_{i_2,d_2} + k_{i_2,d_2'} + k_{i_2,b_2}, \\
        t_{i_2,a_2} + q_{i_2,u_2} + k_{i_2,u_2'} + k_{i_2,b_2} &=
        t_{i_3,a_3} + q_{i_3,d_3} + k_{i_3,d_3'} + k_{i_3,b_3}, \\
        &\ \,\vdots \\
        t_{i_t,a_t} + q_{i_t,u_t} + k_{i_t,u_t'} + k_{i_t,b_t} &=
        t_{i_1,a_1} + q_{i_1,d_1} + k_{i_1,d_1'} + k_{i_1,b_1}.
        \label{eqn-summatching}
    \end{split}\end{equation}
    Summing these together and cancelling the common terms
    $\{t_{i_j,a_j}\}$ and $\{k_{i_j,b_j}\}$, we obtain
    \[ q_{i_1,u_1} + \ldots + q_{i_t,u_t} +
       k_{i_1,u_1'} + \ldots + k_{i_t,u_t'} =
       q_{i_1,d_1} + \ldots + q_{i_t,d_t} +
       k_{i_1,d_1'} + \ldots + k_{i_t,d_t'} .\]
    That is, the {\quadoct} vector representation $\mathbf{k}(S)$
    satisfies the {\quadoct} matching equations.

    \medskip

    We now turn to the more interesting task of proving that our list of
    conditions is \emph{sufficient} for an integer vector
    $\mathbf{w} \in \R^{6n}$ to represent an octagonal almost normal surface.
    Let
    \[ \mathbf{w}~=\,
        \left(\ q_{1,1},q_{1,2},q_{1,3},
        \ k_{1,1},k_{1,2},k_{1,3}\ ;\ \ldots,k_{n,3}\ \right) \in \R^{6n} \]
    be an arbitrary integer vector that satisfies the
    four conditions listed in the statement of this theorem.
    Our aim is to extend $\mathbf{w}$ to an integer vector
    \[ \mathbf{w}'~=\,
        \left(\ t_{1,1},t_{1,2},t_{1,3},t_{1,4},\ q_{1,1},q_{1,2},q_{1,3},
        \ k_{1,1},k_{1,2},k_{1,3}\ ;\ \ldots,k_{n,3}\ \right) \in \R^{10n} \]
    that satisfies the conditions of Theorem~\ref{t-an-admissible}.
    If we can do this, it will follow from Theorem~\ref{t-an-admissible}
    that $\mathbf{w}'$ is the standard almost normal vector representation
    of some octagonal almost normal surface in $\tri$, whereupon
    $\mathbf{w}$ must be the {\quadoct} vector representation of this
    same surface.

    Given our conditions on $\mathbf{w} \in \R^{6n}$, it is clear that
    any non-negative extension $\mathbf{w}' \in \R^{10n}$ will satisfy the
    quadrilateral-octagon constraints, and will have precisely one
    non-zero octagonal coordinate whose value is set to one.  All we
    must do then is show that we can find a set of non-negative
    triangular coordinates $\{t_{i,j}\}$ that satisfy the standard
    almost normal matching equations of Definition~\ref{d-an-matching}.

    Our broad strategy is to use the vertex links of $\tri$
    as a ``canvas'' on which we write the triangular coordinates
    $t_{i,j}$, and to reformulate the matching equations as local
    constraints on this canvas.  In doing this, we show that the
    standard almost normal matching equations describe a cochain
    $\alpha \in C^1(\dualtri)$, where $\dualtri$ is the dual polygonal
    decomposition of the vertex links, and that a solution
    $\{t_{i,j}\}$ exists if and only if $\alpha$ is a coboundary.
    Using the {\quadoct} matching equations we then find that $\alpha$
    is a cocycle, whereupon the result follows from the trivial homology
    of the vertex links.  The details are as follows.

    Because $\tri$ is a compact triangulation, each of its vertex links
    is a triangulated sphere or disc, as illustrated in the left-hand
    diagram of Figure~\ref{fig-linknumbers}.  Each triangular disc type
    appears once and only once amongst the vertex links, and so we can
    write each integer $t_{i,j}$ on the corresponding vertex link triangle as
    illustrated in the right-hand diagram of Figure~\ref{fig-linknumbers}.
    This is the sense in which we use the vertex links as a ``canvas''.

    \begin{figure}[htb]
    \centering
    \includegraphics{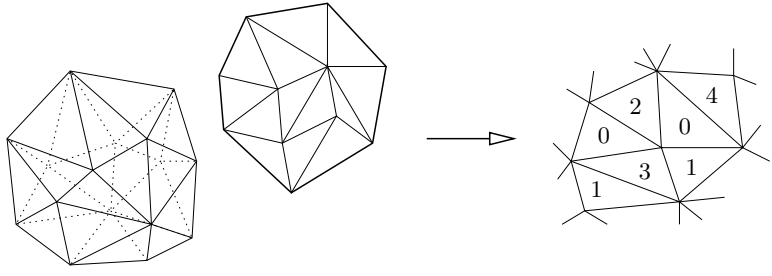}
    \caption{Writing the coordinates $t_{i,j}$ on the triangulated vertex links}
    \label{fig-linknumbers}
    \end{figure}

    We can now reformulate the standard almost normal matching equations as
    constraints on this canvas.  Recall that each standard matching
    equation involves a face $F$ of $\tri$ and arcs parallel to some edge
    $e$ of this face, as illustrated in the left-hand diagram of
    Figure~\ref{fig-linkarcs}.  We can associate every such equation
    with a single non-boundary edge $g$ of a triangulated vertex link,
    where this edge $g$ also appears as an arc of the face $F$ parallel to $e$,
    as illustrated in the right-hand diagram of Figure~\ref{fig-linkarcs}.
    In this way, the standard almost normal matching equations and the
    non-boundary edges
    of the triangulated vertex links are in one-to-one correspondence.

    \begin{figure}[htb]
    \centering
    \includegraphics{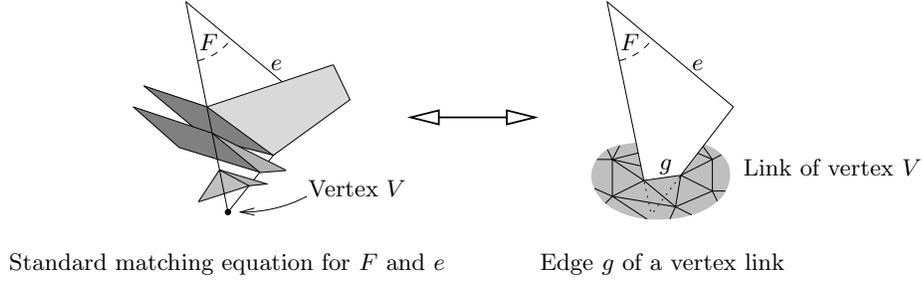}
    \caption{Associating a standard matching equation with
        an edge of a vertex link}
    \label{fig-linkarcs}
    \end{figure}

    Now consider some standard matching equation
    $t_{i,a} + q_{i,b} + k_{i,c} + k_{i,d} =
    t_{j,e} + q_{j,f} + k_{j,g} + k_{j,h}$ (as seen in
    Definition~\ref{d-an-matching}), and let $g$ be the corresponding
    edge of the triangulated vertex links.  The coordinates
    $t_{i,a}$ and $t_{j,e}$ correspond to the triangles on
    either side of $g$, and so we can write this equation in the form
    \[ t_{i,a} - t_{j,e} = K, \]
    where $K$ depends only on the quadrilateral and octagonal coordinates
    of $\mathbf{w}$.
    In other words, $K$ is a fixed quantity (dependent on the chosen
    edge $g$) that we can evaluate by looking at our original
    vector $\mathbf{w} \in \R^{6n}$.  We express this equation
    on our canvas by drawing an arrow from the triangle
    containing $t_{j,e}$ to the triangle containing $t_{i,a}$, and by
    labelling this arrow with the constant $K$.  This procedure is
    illustrated in Figure~\ref{fig-linkarrow}.

    \begin{figure}[htb]
    \centering
    \includegraphics{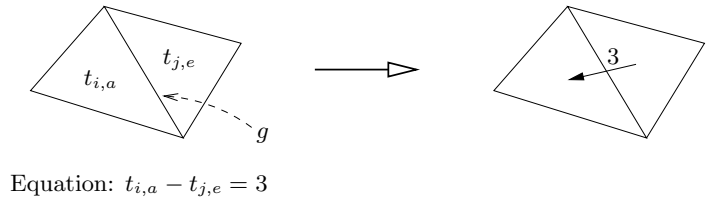}
    \caption{Representing a standard matching equation by a labelled arrow}
    \label{fig-linkarrow}
    \end{figure}

    Our situation is now as follows.  On our canvas---the triangulated
    vertex links of $\tri$---we have a labelled arrow crossing each
    non-boundary edge,
    and our task is to fill each triangle with an integer such that the
    difference across each edge matches the label on the corresponding arrow.
    An example of such a solution for a triangulated disc is illustrated in
    Figure~\ref{fig-linksoln}.  It is clear at this point that we do
    not need to worry about our non-negativity condition, since we can
    always add a constant to every triangle without changing the
    differences across the edges.

    \begin{figure}[htb]
    \centering
    \includegraphics[scale=0.8]{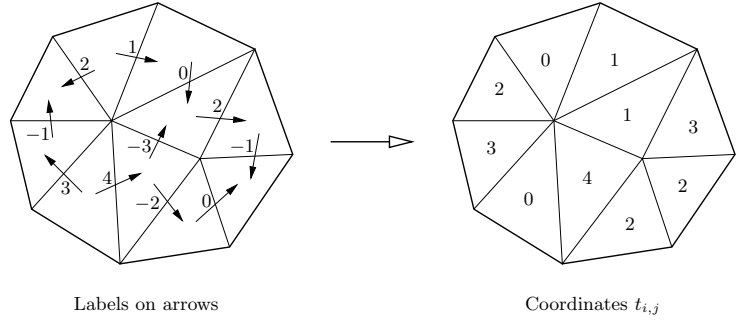}
    \caption{Solving the standard matching equations}
    \label{fig-linksoln}
    \end{figure}

    We can rephrase this using the language of cohomology.
    Let $\dualtri$ be the dual polygonal decomposition of the
    set of all vertex links, so that each triangle of a vertex link
    becomes a vertex of $\dualtri$ and each labelled arrow becomes a
    directed edge of $\dualtri$.  Then together the arrows describe a
    cochain $\alpha \in C^1(\dualtri)$ that maps each dual edge to the
    corresponding label.  A solution $\{t_{i,j}\}$
    corresponds to a cochain $\beta \in C^0(\dualtri)$ that maps each
    dual vertex to the integer in the corresponding triangle, and the
    ``difference condition'' that such a solution must satisfy is simply
    $\alpha = \cobdry \beta$.  That is,
    \emph{a solution $\{t_{i,j}\}$ exists if and only if
    $\alpha$ is a coboundary}.

    We now turn to the {\quadoct} matching equations, which we assume hold
    for our original vector $\mathbf{w} \in \R^{6n}$.  These equations
    do not involve the triangular coordinates at all.  Instead they
    tell us about the relations between different quadrilateral and
    octagonal coordinates of $\mathbf{w}$, which means they give us
    information about the labels on our arrows.

    Consider some vertex $V$ of the triangulation $\tri$, let
    $U$ be some non-boundary vertex of the triangulated link $\ell(V)$,
    and let $e$ be the edge of $\tri$ that runs through $U$ and $V$ as
    illustrated in Figure~\ref{fig-linkvertex}.
    Let $K_1,\ldots,K_t$ be the labels on the arrows surrounding $U$,
    as seen in the right-hand diagram of this figure (where we make all
    arrows point in the
    same direction around $U$ by reversing arrows and negating labels
    as necessary).  Recall that by construction, each label $K_i$ is a
    linear combination of two quadrilateral and four octagonal coordinates
    of $\mathbf{w}$.

    \begin{figure}[htb]
    \centering
    \includegraphics{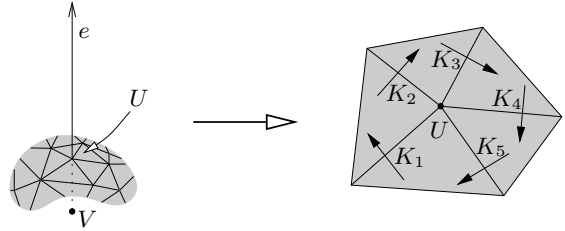}
    \caption{The triangles surrounding some vertex $U$ of the
        vertex link $\ell(V)$}
    \label{fig-linkvertex}
    \end{figure}

    Now consider the {\quadoct} matching equation constructed
    from edge $e$.  By declaring $V$ to be at the upper end of $e$, we can
    invert the procedure used earlier in equation~(\ref{eqn-summatching})
    to express our matching equation as
    \[ K_1 + \ldots + K_t = 0 .\]
    In other words, the {\quadoct} matching equations tell us that
    \emph{around every non-boundary vertex of a triangulated vertex
    link, the sum of labels on arrows is zero}.  We see this for
    instance in Figure~\ref{fig-linksoln}---by walking clockwise around
    each internal vertex and negating labels when arrows point backwards,
    the left internal vertex gives $1 + 0 - (-3) - 4 + 3 + (-1) - 2 = 0$,
    and the right internal vertex gives $2 + (-1) - 0 - (-2) + (-3) = 0$.

    Returning to our cohomology formulation, this simply tells us that
    $\cobdry \alpha = 0$, where $\alpha \in C^1(\dualtri)$ is the cochain
    described earlier.  That is, \emph{$\alpha$ is a cocycle}.
    However, because each
    vertex link is a sphere or a disc, the cohomology group $H^1(\dualtri)$
    is trivial.  Therefore $\alpha$ is also a coboundary, as required.
\end{proof}

The final step of this proof shows why we must exclude the ideal
triangulations of Thurston \cite{thurston78-lectures} from our
consideration.  In an ideal triangulation, vertices
form higher-genus cusps, whereupon the vertex links become higher-genus
surfaces with non-trivial homology.
Therefore, although the {\quadoct} matching equations
still show that $\alpha \in C^1(\dualtri)$ is a cocycle in the proof above,
we can no longer conclude from this that $\alpha$ is a coboundary and that the
solution $\{t_{i,j}\}$ exists.

To finish this section, we define a vertex surface in our new coordinate
system using the same pattern that we have employed several times already.

\begin{definition}[{\QuadOct} Vertex Almost Normal Surface] \label{d-qo-vertex}
    Let $\tri$ be a compact 3-ma\-ni\-fold triangulation,
    The \emph{{\quadoct} projective solution space}
    for $\tri$ is the rational polytope formed by (i)~taking the
    polyhedral cone of all non-negative vectors in $\R^{6n}$ that satisfy
    the {\quadoct} matching equations for $\tri$, and
    then (ii)~intersecting this cone with the
    hyperplane $\{\mathbf{w} \in \R^{6n}\,|\,\sum w_i = 1\}$.

    Let $S$ be an octagonal almost normal surface in $\tri$.
    If the {\quadoct} vector representation $\krep{S}$ is a
    positive multiple of some vertex of the {\quadoct} projective
    solution space, then we call $S$ a
    \emph{{\quadoct} vertex almost normal surface}.
\end{definition}

It should be noted that, whilst it can be shown that a
connected \emph{\quadoct} vertex almost normal surface is also a
\emph{standard} vertex almost normal surface\footnote{The proof is identical
to the corresponding result for normal surfaces; see \cite{burton09-convert}
for details.},
the converse is not necessarily true.  We address this problem
for the 3-sphere recognition algorithm in the following section by proving
that the surface we seek does indeed appear as a vertex surface in
{\quadoct} coordinates.  More generally, we describe in
Section~\ref{s-enumeration} how the conversion algorithm of
\cite{burton09-convert} can reconstruct the set of all
standard vertex almost normal surfaces, given the set of all
{\quadoct} vertex almost normal surfaces as input.

\section{3-Sphere Recognition} \label{s-sphere}

The algorithm to recognise the 3-sphere has seen a significant evolution
since it was first introduced by Rubinstein in 1992.  Rubinstein's
original algorithm \cite{rubinstein97-3sphere} involved finding a
maximal disjoint collection of embedded normal 2-spheres within a
triangulation $\tri$, slicing $\tri$ open along these 2-spheres, and
then searching for almost normal 2-spheres within the complementary
regions.  Thompson \cite{thompson94-thinposition} gave an alternate
proof of this algorithm using Gabai's concept of \emph{thin position},
and also showed that it was only necessary to consider octagonal almost
normal surfaces.

The algorithm at this stage remained extremely slow\footnote{In theory
of course, since at that stage a computer implementation did not exist.}
and fiendishly difficult to implement.  The main problems were (i)~the
need to locate and deal with many normal and almost normal surfaces
simultaneously, and (ii)~the need to locate almost normal surfaces in
complementary regions of $\tri$ containing not only tetrahedra but also
sliced and truncated \emph{pieces} of tetrahedra.  Fortunately this
algorithm was simplified enormously by Jaco and Rubinstein
\cite{jaco03-0-efficiency} using the concept of \emph{0-efficient
triangulations}, to the point where a computer implementation became
practical.  The first real implementation of 3-sphere recognition
was in the software package {\regina} \cite{burton04-regina} in 2004,
over a decade after the algorithm was first introduced.

We begin this section with a brief discussion of the theory behind the
final algorithm of Jaco and Rubinstein \cite{jaco03-0-efficiency},
followed by the algorithm itself (Algorithm~\ref{a-sphere}).  A key step
of this algorithm (and indeed its bottleneck) is an
enumeration of standard vertex almost normal surfaces.  The main result
of this section is Theorem~\ref{t-qo-sphere}, in which we show that we
can restrict our attention to \emph{\quadoct} vertex normal surfaces
instead.

As noted in the introduction, the enumeration of normal and almost
normal surfaces can grow exponentially slowly in the dimension of the
underlying vector space \cite{burton08-dd}.  By using
Theorem~\ref{t-qo-sphere} we are able to reduce this dimension from
$10n$ to $6n$, which in theory should cut down the running time
substantially.  In Section~\ref{s-performance} we test this
experimentally, where indeed we find that the speed of 3-sphere
recognition improves by orders of magnitude for the cases that we
examine.

We turn our attention now to the most recent form of the 3-sphere
recognition algorithm, as given by Jaco and Rubinstein
\cite{jaco03-0-efficiency}.  The advantages of this algorithm over its
predecessors are due to the use of 0-efficient triangulations,
which are defined as follows.

\begin{definition}[0-Efficiency]
    Let $\tri$ be a closed compact 3-manifold triangulation.
    We say that $\tri$ is \emph{0-efficient} if the only embedded
    normal 2-spheres in $\tri$ are vertex links.
\end{definition}

It turns out that 0-efficient triangulations are relatively
common, in that they exist for all closed orientable irreducible
3-manifolds except for $\R P^3$ \cite[Theorem 5.5]{jaco03-0-efficiency}.
Moreover, Jaco and Rubinstein provide a procedure for explicitly
constructing a 0-efficient triangulation of such a manifold.
More generally, Jaco and Rubinstein prove the following:

\begin{theorem} \label{t-0eff-decomp}
    Let $\tri$ be a closed compact 3-manifold triangulation
    representing some (unknown) orientable 3-manifold $M$.
    Then there is a procedure to express $M$ as a connected sum
    $M = M_1 \# \ldots \# M_t$, where each $M_i$ is either given
    by a 0-efficient triangulation $\tri_i$, or is one of the special
    spaces $S^2 \times S^1$, $\R P^3$ or the lens space $L(3,1)$.
\end{theorem}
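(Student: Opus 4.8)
The plan is to induct on the number $n$ of tetrahedra in $\tri$. If $\tri$ is already $0$-efficient we are finished: take $t=1$ and $M_1=M$. Otherwise, by the definition of $0$-efficiency there is an embedded normal $2$-sphere $S$ in $\tri$ that is not a vertex link, and for definiteness I would fix such an $S$ of least weight; the arithmetic behind Theorem~\ref{t-admissible} even allows one to take $S$ to be a vertex normal $2$-sphere, but least weight is all that is needed below. Note that $S$ is connected and, since it is not a vertex link, Lemma~\ref{l-vecrep} forces at least one quadrilateral coordinate of $\qrep{S}$ to be non-zero; in particular $S$ runs through some tetrahedron in a quadrilateral. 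Being a genuine normal surface, $S$ meets every tetrahedron only in triangles and quadrilaterals, so none of the octagonal machinery of later sections enters here.

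The engine of the induction is the \emph{crushing} construction of Jaco and Rubinstein. The naive approach --- cutting $\tri$ along $S$, capping the two resulting $S^2$ boundaries with $3$-balls, and re-triangulating --- does exhibit a connected-sum splitting of $M$ (or, when $S$ is non-separating, an $S^2\times S^1$ summand) in the manner of Kneser and Milnor, but it need not decrease the tetrahedron count and so supports no induction. Crushing fixes this: one collapses every product region of $\tri\setminus S$ that is squeezed between parallel normal pieces, flattens the finitely many tetrahedra that $S$ passes through according to their triangle/quadrilateral patterns, and re-triangulates whatever cell complex remains, collapsing edges and identifying faces of $\tri$ as forced. The \emph{Crushing Lemma} I would establish (or simply invoke) states that the output is a triangulation $\tri^{*}$ --- possibly empty, possibly disconnected --- with \emph{strictly fewer} than $n$ tetrahedra (the tetrahedron carrying a quadrilateral of $S$ is collapsed), such that, writing $M_1^{*},\dots,M_r^{*}$ for the closed manifolds carried by the components of $\tri^{*}$, we have $M = M_1^{*}\#\cdots\# M_r^{*}\# N_1\#\cdots\# N_s$ with each $N_j$ one of $S^3$, $\R P^3$, $L(3,1)$ or $S^2\times S^1$.

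Granting the Crushing Lemma, the theorem follows by bookkeeping. Each $M_i^{*}$ is carried by a triangulation with fewer than $n$ tetrahedra, so the inductive hypothesis writes $M_i^{*}$ as a connected sum of $0$-efficiently triangulated pieces together with copies of the four special spaces. Substituting these into the expression above, and using that connected sum is associative and commutative with $S^3$ as its identity (so all $S^3$ summands may be deleted), yields exactly the decomposition claimed. The base cases --- triangulations with only one or two tetrahedra, for which the assertion is checked by hand (and the convention that an empty triangulation represents $S^3$ disposes of the case $r=0$) --- are immediate, and the procedure terminates because crushing strictly reduces the number of tetrahedra at every step.

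The genuine obstacle is the Crushing Lemma itself: one must show that flattening $S$ never produces anything worse than the listed operations, i.e.\ that the degenerate cells which appear --- the ``football'' and ``pillow'' pieces formed when a tetrahedron collapses --- carry only $S^3$, $\R P^3$, $L(3,1)$ or $S^2\times S^1$, and that the result is once more a legitimate triangulation with at least one tetrahedron fewer. This requires the full Jaco--Rubinstein local analysis of how a normal $2$-sphere can sit inside a single tetrahedron and which edge collapses and face identifications each pattern forces. The two delicate points are that a non-separating $S$ contributes an $S^2\times S^1$ summand rather than an honest splitting (the crushing formalism absorbs this automatically), and that when $S$ bounds a ball the crushing leaves $M$ literally unchanged --- so the \emph{only} thing preventing the recursion from cycling on such inessential spheres is the strict drop in tetrahedron count, which must accordingly be verified in every configuration.
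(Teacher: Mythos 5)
Your proposal follows exactly the route the paper takes: the paper offers no proof of this statement but cites Theorems 5.9 and 5.10 of Jaco--Rubinstein, summarising the idea as ``repeatedly locate embedded normal 2-spheres and crush them,'' and your induction on the tetrahedron count via the crushing procedure is a faithful sketch of that cited argument. The sketch is sound --- including the key observations that a connected non-vertex-linking normal sphere must carry a quadrilateral (so crushing strictly decreases $n$) and that the hard content is the local Jaco--Rubinstein analysis of the collapsed pieces, which you correctly defer to their paper just as the author does.
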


The details of this procedure can be found in Theorems~5.9 and~5.10 of
\cite{jaco03-0-efficiency} and surrounding comments.  The key idea is to
repeatedly locate embedded normal 2-spheres and crush them, until no
such 2-spheres can be found.  Note that we might still be unable to
identify the constituent manifolds $\{M_i\}$, but with the 0-efficient
triangulations $\{\tri_i\}$ we may be better placed to learn more about them.
We do not expand further on this decomposition procedure of Jaco and
Rubinstein---although it plays a key role in the 3-sphere recognition
algorithm, our focus for this paper is on a different part of the algorithm
instead.

The core result behind Jaco and Rubinstein's version of the 3-sphere
recognition algorithm is the following theorem, which builds on earlier
work of Rubinstein and Thompson
\cite{rubinstein97-3sphere,thompson94-thinposition} by exploiting
properties of 0-efficiency.  The various components of this theorem can
be found in Proposition~5.12 of \cite{jaco03-0-efficiency} and
surrounding comments.

\begin{theorem} \label{t-std-sphere}
    Let $\tri$ be a closed compact 3-manifold triangulation
    that is orientable and {\efffix}.
    Then the following statements are equivalent:
    \begin{itemize}
        \item $\tri$ is a triangulation of the 3-sphere;
        \item $\tri$ has more than one vertex,
            or $\tri$ contains an octagonal almost normal 2-sphere;
        \item $\tri$ has more than one vertex,
            or $\tri$ contains an octagonal almost normal 2-sphere
            that is a standard vertex almost normal surface.
    \end{itemize}
\end{theorem}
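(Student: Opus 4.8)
The plan is to prove the three statements equivalent through the cycle $\text{(third)}\Rightarrow\text{(second)}\Rightarrow\text{(first)}\Rightarrow\text{(third)}$, assembling the substantive ingredients from the literature rather than reproving them. Rubinstein's thin-position argument \cite{rubinstein97-3sphere}, refined by Thompson so that the exceptional piece may be taken to be an octagon \cite{thompson94-thinposition}, is what relates $S^3$ to almost normal $2$-spheres, while the structural facts about \efffix\ triangulations, together with the refinement to vertex surfaces, are due to Jaco and Rubinstein \cite{jaco03-0-efficiency}; indeed the theorem is essentially a restatement of their Proposition~5.12 and the surrounding remarks. The first link of the cycle is immediate: an octagonal almost normal $2$-sphere that is a standard vertex almost normal surface is in particular an octagonal almost normal $2$-sphere.

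For $\text{(second)}\Rightarrow\text{(first)}$ I would argue by cases. If $\tri$ has more than one vertex then, because $\tri$ is closed, orientable and \efffix, the structural theorem of Jaco and Rubinstein forces $\tri$ to be a (two-vertex) triangulation of $S^3$. Otherwise $\tri$ contains an octagonal almost normal $2$-sphere $S$; because $\tri$ is \efffix\ the underlying manifold $M$ is irreducible (a basic consequence of 0-efficiency --- every embedded normal $2$-sphere is a ball-bounding vertex link, so $M$ carries no essential $2$-sphere). The thin-position argument of Rubinstein and Thompson then applies to $S$, showing that the two regions into which $S$ separates $\tri$ are both balls, so $M = B^3 \cup_{S^2} B^3 = S^3$.

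For $\text{(first)}\Rightarrow\text{(third)}$, suppose $\tri$ triangulates $S^3$. If $\tri$ has more than one vertex we are in the first disjunct and done, so assume $\tri$ has a single vertex. Rubinstein and Thompson again supply the first step: sweeping out a Heegaard splitting of $S^3$ read off from $\tri$ yields an almost normal $2$-sphere at a thin level, which may be taken octagonal. It remains to upgrade this to a \emph{vertex} surface, and here I would follow Jaco and Rubinstein's least-weight argument in standard almost normal coordinates: take $S$ to be an octagonal almost normal $2$-sphere of minimal weight; if $\vrep{S}$ did not span an extreme ray of the cone of non-negative solutions of the standard almost normal matching equations, one could decompose $\vrep{S}$ as a sum of admissible vectors, exactly one of which carries the octagonal coordinate (the rest being normal), and then --- using additivity of the Euler characteristic in standard coordinates together with 0-efficiency, which makes every normal $2$-sphere summand a ball-bounding vertex link --- strip pieces away to obtain a strictly lighter octagonal almost normal $2$-sphere, contradicting minimality. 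Hence $\vrep{S}$ spans an extreme ray and $S$ is a standard vertex almost normal surface.

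The main obstacle, and the single place where I would rely wholesale on \cite{rubinstein97-3sphere, thompson94-thinposition, jaco03-0-efficiency}, is the thin-position machinery itself: it is what produces the octagonal almost normal $2$-sphere when $\tri$ is an $S^3$, and what turns the existence of such a sphere into the ball-union-ball decomposition in the other direction. The delicate bookkeeping, were one to want a self-contained account, lies in the vertex-surface refinement in $\text{(first)}\Rightarrow\text{(third)}$: one must track the unique octagonal coordinate through Haken sums so that it is neither split between two summands nor duplicated, and one must check that 0-efficiency really does dispose of every normal summand that could obstruct the weight-reduction step --- which is precisely the role the \efffix\ hypothesis plays.
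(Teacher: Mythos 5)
The paper offers no proof of this theorem at all: it is imported from the literature, with the text stating only that ``the various components of this theorem can be found in Proposition~5.12 of \cite{jaco03-0-efficiency} and surrounding comments,'' the thin-position input being credited to Rubinstein and Thompson. Your proposal attributes the substance to exactly the same sources and organises them into the same components, so structurally you are doing what the paper does, and there is no in-paper argument to compare against. The one step you sketch in any real detail---upgrading an octagonal almost normal $2$-sphere to a \emph{vertex} surface---takes a different route from the technique the author uses for the analogous claim in {\quadoct} coordinates (Theorem~\ref{t-qo-sphere}), which follows Casson and Jaco--Rubinstein in maximising the convex functional $\chiqo(\mathbf{u})-O(\mathbf{u})$ over the minimal face of the projective solution space containing the given sphere and then identifying the maximising vertex as an almost normal sphere via a case analysis on the pair $(\chi,O)$. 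Your least-weight Haken-sum alternative is viable, but as written it leans too hard on {\efffix}ness: the normal summands you propose to strip away need not be $2$-spheres (they can be tori, higher-genus surfaces, or projective planes), so 0-efficiency does not by itself dispose of them. You still need the Euler-characteristic bookkeeping---$\chi(S)=2=\sum\chi(S_i)$, exactly one summand carries the octagon, the cases $\chi=2,\,O=0$ and $\chi=1,\,O=0$ for a connected piece are excluded by 0-efficiency and by orientability (a normal projective plane doubles to a normal sphere) respectively---to conclude that the octagon-carrying summand is itself an almost normal $2$-sphere of strictly smaller weight. With that supplied, your argument closes the cycle correctly.
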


Based on this result, the full 3-sphere recognition algorithm of Jaco
and Rubinstein runs as follows.

\begin{algorithm}[3-Sphere Recognition] \label{a-sphere}
    Let $\tri$ be a closed compact 3-manifold triangulation, and let
    $M$ be the 3-manifold that $\tri$ represents.  The
    following algorithm decides whether or not $M$ is the 3-sphere $S^3$:
    \begin{enumerate}
        \item \label{en-sphere-homology}
        Test whether $M$ is orientable and has trivial first
        homology.  If not, then terminate with the result $M \neq S^3$.
        \item \label{en-sphere-decompose}
        Using the procedure of Theorem~\ref{t-0eff-decomp},
        express the underlying 3-manifold
        $M$ as a connected sum decomposition $M_1 \# M_2 \# \ldots \# M_t$,
        where each $M_i$ is given by a 0-efficient triangulation $T_i$.
        If this list is empty (i.e., $t=0$), then terminate with the
        result $M = S^3$.
        \item \label{en-sphere-enumerate}
        Of the 0-efficient triangulations $\tri_1,\ldots,\tri_t$,
        ignore those with more than one vertex.  For each one-vertex
        triangulation $\tri_i$:
        \begin{enumerate}[(i)]
            \item Enumerate the
            standard vertex almost normal surfaces of $\tri_i$.
            \item Search through the resulting list of surfaces for an
            almost normal 2-sphere.  If one cannot be found then
            terminate with the result $M \neq S^3$.
        \end{enumerate}
        \item If we have not yet terminated, then every 0-efficient
        triangulation $\tri_i$ has either more than one vertex or an
        almost normal 2-sphere.  In this case we conclude that $M = S^3$.
    \end{enumerate}
\end{algorithm}

\noindent
There are some points worth noting about this algorithm:
\begin{itemize}
    \item In step~\ref{en-sphere-decompose}, we do not account for the
    special spaces $S^2 \times S^1$, $L(3,1)$ and $\R P^3$ that can arise
    in the decomposition procedure of Theorem~\ref{t-0eff-decomp}.  This is
    because the homology test in step~\ref{en-sphere-homology} prevents any
    of these special spaces from appearing.

    \item The enumeration of surfaces in step~\ref{en-sphere-enumerate}
    involves a modified double description method, which is described fully
    in \cite{burton08-dd}.  We return to the enumeration algorithm
    in Section~\ref{s-enumeration}, where we discuss it from the
    perspective of {\quadoct} coordinates.
\end{itemize}

We come now to the main result of this section, which is a {\quadoct}
analogue for the earlier Theorem~\ref{t-std-sphere}.  What we essentially
show is that, for the enumeration of vertex almost normal surfaces in
step~\ref{en-sphere-enumerate} of the algorithm above, we can work
in {\quadoct} coordinates instead of standard coordinates (in other
words, $6n$ dimensions instead of $10n$).  This is important from a
practical perspective, since experience indicates that this enumeration
step is typically the bottleneck for the entire 3-sphere recognition
algorithm.\footnote{If the manifold $M$ is a connected sum of several
high-complexity homology 3-spheres, then the decomposition procedure of
Jaco and Rubinstein becomes a greater problem for performance.  However,
it is reasonable to suggest that such cases are rare in ``ordinary''
applications.}

\begin{theorem} \label{t-qo-sphere}
    Let $\tri$ be a closed compact 3-manifold triangulation
    that is orientable and {\efffix}.
    Then the following statements are equivalent:
    \begin{itemize}
        \item $\tri$ is a triangulation of the 3-sphere;
        \item $\tri$ has more than one vertex, or $\tri$
        contains an octagonal almost normal 2-sphere
        that is a {\quadoct} vertex almost normal surface.
    \end{itemize}
\end{theorem}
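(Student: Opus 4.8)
The plan is to obtain Theorem~\ref{t-qo-sphere} from the already-established Theorem~\ref{t-std-sphere}, whose second bullet characterises the 3-sphere among closed orientable {\efffix} triangulations by the existence of \emph{some} octagonal almost normal 2-sphere (when $\tri$ has a single vertex). The backward implication is then immediate: a {\quadoct} vertex almost normal 2-sphere is in particular an octagonal almost normal 2-sphere, so whenever the right-hand disjunction of Theorem~\ref{t-qo-sphere} holds, so does the corresponding disjunction of Theorem~\ref{t-std-sphere}, and $\tri$ is a triangulation of $S^3$. All the work is in the forward implication: assuming $\tri = S^3$ and (without loss of generality) that $\tri$ has exactly one vertex, Theorem~\ref{t-std-sphere} hands us an octagonal almost normal 2-sphere, and we must produce one that is a {\quadoct} vertex almost normal surface.

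First I would replace that surface by an octagonal almost normal 2-sphere $S$ of least possible weight $\sum_i (\krep{S})_i$, and claim that any such minimal $S$ must be a {\quadoct} vertex almost normal surface, i.e.\ that $\krep{S}$ spans an extreme ray of the cone $C$ of non-negative solutions of the {\quadoct} matching equations. Suppose not. Then $\krep{S}$ lies in the relative interior of a face of $C$ of dimension at least two, which I would use to split $\krep{S}$ (after a standard integrality argument) as $\krep{S} = \mathbf{w}_1 + \mathbf{w}_2$ with $\mathbf{w}_1,\mathbf{w}_2$ non-zero non-negative integer vectors satisfying the {\quadoct} matching equations, the single octagonal coordinate of $\krep{S}$ (which equals $1$) landing wholly in $\mathbf{w}_1$. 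Since the quadrilateral-octagon constraints only restrict supports and are inherited by any vector whose support lies inside that of $\krep{S}$, both $\mathbf{w}_i$ satisfy them; Theorem~\ref{t-qo-admissible} then makes $\mathbf{w}_1$ the {\quadoct} vector of an octagonal almost normal surface $S_1$ and $\mathbf{w}_2$ that of a normal surface $T$, and I would reconstruct both with no vertex linking components. Because these pieces use compatible quadrilateral types and only $S_1$ carries an octagon, their geometric (Haken) sum $R = S_1 + T$ is defined, with $\vrep{R} = \vrep{S_1} + \vrep{T}$ and hence $\krep{R} = \krep{S}$; by Lemma~\ref{l-qo-vecrep}, $R$ differs from $S$ only by vertex linking components.

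The crucial estimate then combines three facts. Since $S$ is connected and $R$ has the same {\quadoct} vector, $\vrep{R} = \vrep{S} + \ell L$ for the unique vertex link $L$ of $\tri$ and some integer $\ell$; and because $\vrep{S}$, $\vrep{S_1}$, $\vrep{T}$ are all chosen minimal (each vanishes on some triangle of $L$), comparing coordinates along $L$ forces $\ell \ge 0$. As $\chi$ is linear on standard coordinates and $\chi(L) = 2$, this gives $\chi(S_1) + \chi(T) = \chi(R) = 2 + 2\ell \ge 2$. Now 0-efficiency enters: every component of $T$, and every non-octagon component of $S_1$, is a closed normal surface in the 0-efficient triangulation $\tri = S^3$ with no vertex linking component, hence (no normal 2-spheres except vertex links; no closed non-orientable surfaces inside $S^3$) has Euler characteristic at most $0$. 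Therefore $\chi(T) \le 0$, so $\chi(S_1) \ge 2$, so the octagon-carrying component $S_1'$ of $S_1$ has $\chi(S_1') \ge \chi(S_1) \ge 2$; being a closed orientable surface in $S^3$ it satisfies $\chi(S_1') \le 2$, so $\chi(S_1') = 2$ and $S_1'$ is again an octagonal almost normal 2-sphere. But $\krep{S_1'} \le \mathbf{w}_1 = \krep{S} - \mathbf{w}_2$ with $\mathbf{w}_2 \ne 0$, so $S_1'$ has strictly smaller weight than $S$, contradicting minimality. Hence the minimal $S$ is a {\quadoct} vertex almost normal surface, and the forward implication follows.

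I expect the main obstacle to be precisely this Euler-characteristic bookkeeping. Because $\chi$ is \emph{not} a linear function of {\quadoct} coordinates --- adding a vertex link changes it by $2$ --- one is forced to pass through standard coordinates, to commit to vertex-link-free reconstructions of every surface in sight, and to show that reassembling the pieces by a Haken sum recovers $S$ together with a \emph{non-negative} (not merely integral) number of vertex links; 0-efficiency, together with the fact that we are working inside $S^3$, is exactly what rules out the stray normal 2-sphere or non-orientable summand that would otherwise sabotage the inequality $\chi(S_1) + \chi(T) \ge 2$. A secondary but genuine nuisance is the integrality argument used to split $\krep{S}$: because the single global octagonal coordinate must remain equal to $1$ in one of the pieces while a rational cone decomposition need not respect this, the splitting requires a short case analysis (and, in the balanced case, an appeal to primitivity of the extreme-ray generators to rule out disconnected pieces).
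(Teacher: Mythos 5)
Your overall strategy---reduce to the one-vertex case via Theorem~\ref{t-std-sphere}, then argue that a weight-minimal octagonal almost normal 2-sphere must be a {\quadoct} vertex surface via a Haken-sum splitting and an Euler characteristic count---is a genuinely different route from the paper's. The paper instead takes the minimal face $F$ of the {\quadoct} projective solution space containing $\krep{S}$, shows that every point of $F$ satisfies the quadrilateral-octagon constraints, proves that the Casson functional $g = \chiqo - O$ is \emph{convex} on $F$ (Euler characteristic being subadditive rather than linear in {\quadoct} coordinates, because the canonical extension can only subtract vertex links), and concludes that the maximum of $g$ over $F$, which is at least $g(\krep{S})=1$, is attained at a vertex $\mathbf{m}$ of $F$; the connected surface reconstructed from the smallest integer multiple of $\mathbf{m}$ is then shown, by essentially the same 0-efficiency/orientability case analysis you use, to be an octagonal almost normal 2-sphere. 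The ingredients you share with the paper (linearity of $\chi$ in standard coordinates, the vertex-link ambiguity of the extension, the sign of $\ell$, and 0-efficiency forcing $\chi\le 0$ on octagon-free components) are all sound.

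The gap is precisely where you predicted trouble, and it is not closable by a ``short case analysis.'' First, an integer point of a rational cone that does not span an extreme ray need not decompose as $\krep{S}=\mathbf{w}_1+\mathbf{w}_2$ with both summands nonzero non-negative \emph{integer} solutions; in general one only obtains $k\,\krep{S}=\mathbf{x}+\mathbf{y}$ for some integer $k\ge 2$ (e.g.\ $(1,1)$ in the cone spanned by $(2,1)$ and $(1,2)$ admits no such integral splitting). Second, once the multiple $k$ is introduced your Euler characteristic bookkeeping closes up on itself: the $k$ octagons are forced to lie one per component on $k$ almost normal 2-sphere components whose {\quadoct} weights are each $\ge w(S)$ by minimality and sum to exactly $k\,w(S)$, so every inequality becomes an equality and no contradiction results---you merely produce $k$ minimal-weight almost normal 2-spheres whose vectors sum to $k\,\krep{S}$, which is entirely consistent with $\krep{S}$ failing to be a vertex. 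Ruling this out requires the exchange-annulus/irredundancy machinery of Jaco--Tollefson or an induction over faces with a secondary decreasing quantity, i.e.\ a substantial further argument. This is exactly the difficulty the paper's convexity argument is built to bypass: the maximum of a convex function over a polytope is automatically attained at a vertex, so no integral splitting of $\krep{S}$ is ever required.
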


\begin{proof}
    We assume that $\tri$ is a one-vertex triangulation, since otherwise
    the result follows immediately from Theorem~\ref{t-std-sphere}.
    Given this, it is clear from Theorem~\ref{t-std-sphere} that $\tri$
    triangulates the 3-sphere if and only if $\tri$ contains an octagonal
    almost normal 2-sphere.  All we need to show is that,
    if $\tri$ contains an octagonal almost normal 2-sphere, then it
    contains one as a {\quadoct} vertex almost normal surface.

    Our proof is based around an idea of Casson, used also by
    Jaco and Rubinstein to prove the corresponding claim in standard
    coordinates.  We work within a face of the projective solution space
    and show that the maximum of $\chiqo(\mathbf{u}) - O(\mathbf{u})$ occurs
    at a vertex, where $\chiqo(\cdot)$ represents Euler characteristic and
    $O(\cdot)$ is the sum of octagonal coordinates.  One complication that we
    face in {\quadoct} coordinates is that, unlike the situation in
    standard coordinates, Euler characteristic is not a linear functional.
    Nevertheless, we are able to work around this difficulty by falling back
    to convexity instead.  The details are as follows.

    Suppose that $\tri$ contains some octagonal almost normal 2-sphere $S$.
    Let $\projqo \subseteq \R^{6n}$ denote the {\quadoct} projective
    solution space (Definition~\ref{d-qo-vertex}), and let $F$ be the
    minimal-dimensional face of $\projqo$ containing the vector
    representation $\krep{S}$.  This face $F$ is the face in which we
    plan to work.

    We begin by showing that every point $\mathbf{u} \in F$ satisfies
    the quadrilateral-octagon constraints.  In contrast, suppose that some
    $\mathbf{u} \in F$ does not satisfy these constraints.  Then
    for some coordinate position $i \in \{1,\ldots,6n\}$ we must have
    $u_i > 0$ where $\krep{S}_i = 0$.  Let $H$ be the hyperplane
    $H = \{\mathbf{w} \in \R^{6n}\,|\,w_i = 0\}$; it is clear that
    $H$ is a supporting hyperplane for $\projqo$, and so $H \cap F$ is a
    sub-face of $F$ containing $\krep{S}$ but not $\mathbf{u}$,
    contradicting the minimality of $F$.

    In order to define the Euler characteristic function $\chiqo : F \to \R$,
    we must understand the relationship between standard and {\quadoct} vector
    representations.  With this in mind, we define the \emph{projection map}
    $\projmap : \R^{10n} \to \R^{6n}$ and the \emph{extension map}
    $\extmap : F \to \R^{10n}$ as follows.\footnote{These maps are the
    almost normal analogues to \emph{quadrilateral projection} and
    \emph{canonical extension}, which are defined in \cite{burton09-convert}
    for the context of embedded normal surfaces.}
    \begin{itemize}
        \item For a vector $\mathbf{v} \in \R^{10n}$, the
        projection $\projmap(\mathbf{v})$ is the vector
        $\mathbf{v}$ with all triangular coordinates removed.  That is, if
        \begin{align*}
        \mathbf{v}~&=\, \left(\ t_{1,1},t_{1,2},t_{1,3},t_{1,4},
            \ q_{1,1},q_{1,2},q_{1,3}, \ k_{1,1},k_{1,2},k_{1,3}\ ;
            \ \ldots,k_{n,3}\ \right) \in \R^{10n},\ \mbox{then} \\
        \projmap(\mathbf{v})~&=\, \left(\ q_{1,1},q_{1,2},q_{1,3},
            \ k_{1,1},k_{1,2},k_{1,3}\ ; \ \ldots,k_{n,3}\ \right) \in \R^{6n}.
        \end{align*}

        \item For a vector $\mathbf{u} \in F \subset \R^{6n}$, the
        extension $\extmap(\mathbf{u})$ is defined as follows.
        Because $F \subseteq \projqo$, we know that
        $\mathbf{u}$ satisfies the {\quadoct} matching equations.
        By the same argument used in the proof of
        Theorem~\ref{t-qo-admissible}, we can therefore solve the
        \emph{standard} almost normal matching equations to obtain values for
        the missing triangular coordinates, giving us an extension
        $\mathbf{x} \in \R^{10n}$ that satisfies the standard almost normal
        matching equations and for which $\projmap(\mathbf{x}) = \mathbf{u}$.

        By the same argument used in the proof of Lemma~\ref{l-qo-vecrep},
        this extension is unique up to multiples of vertex links.
        We therefore define $\extmap(\mathbf{u})$ to be the ``minimal''
        extension, in the sense that we subtract the largest possible
        multiple of each vertex link without allowing any coordinates to
        become negative.  In other words, every coordinate of
        $\extmap(\mathbf{u})$ is non-negative, and for every vertex
        link $\ell(V)$, the coordinate for some triangular disc type
        in $\ell(V)$ is zero.

        It is important to note that, based on the way in which we solve the
        standard almost normal matching equations, if $\mathbf{u}$ is an
        integer vector then $\extmap(\mathbf{u})$ is an integer vector also.
    \end{itemize}

    It is clear that $\projmap:\R^{10n}\to\R^{6n}$ is a linear map.
    For $\extmap:F\to\R^{10n}$ the situation is a little more complex.
    By the linearity of the matching equations, it is clear that
    \begin{equation}
        \label{eqn-extscale}
        \extmap(\lambda \mathbf{u}) = \lambda \extmap(\mathbf{u})
    \end{equation}
    for any $\lambda \geq 0$.  On the other hand, for arbitrary
    $\mathbf{u},\mathbf{w} \in F$
    we only know that $\extmap(\mathbf{u} + \mathbf{w})$ and
    $\extmap(\mathbf{u}) + \extmap(\mathbf{w})$ are related by adding or
    subtracting multiples of vertex links.  Since both
    $\extmap(\mathbf{u})$ and $\extmap(\mathbf{w})$ are non-negative
    vectors, $\extmap(\mathbf{u}+\mathbf{w})$ can only
    \emph{subtract} vertex links from their sum, yielding the non-linear
    relation
    \begin{equation}
        \label{eqn-extsum}
        \extmap(\mathbf{u} + \mathbf{w}) =
        \extmap(\mathbf{u}) + \extmap(\mathbf{w})
            - \sum \lambda_i \vrep{\ell(V_i)},
    \end{equation}
    where each $\ell(V_i)$ is a vertex linking surface and each
    $\lambda_i \geq 0$.

    We can now define our Euler characteristic function as follows.
    It is well known that Euler characteristic
    is a linear functional in \emph{standard} coordinates---for an almost
    normal surface $S$ the Euler characteristic $\chi(S)$ is a linear function
    of the coordinates $\{t_{i,j}\}$, $\{q_{i,j}\}$ and
    $\{k_{i,j}\}$,\footnote{The number of
    faces in $S$ is simply $\sum t_{i,j} + \sum q_{i,j} + \sum k_{i,j}$.
    The number of vertices in $S$ is $\sum w(e_i)$, where $w(e_i)$ is
    the number of times $S$ intersects the edge $e_i$ of $\tri$,
    and where $w(e_i)$ can be written as a linear function of
    the discs in some arbitrary tetrahedron containing $e_i$.
    Edges of $S$ are dealt with in a similar way.}
    and we simply extend this to a linear functional $\chi:\R^{10n}\to\R$.
    On our face $F \subseteq \projqo$ we then define the Euler characteristic
    function $\chiqo:F \to \R$ by
    \[ \chiqo(\mathbf{u}) = \chi(\extmap(\mathbf{u})). \]

    Although $\chiqo$ is not linear on $F$, we can observe that each
    vertex link $\ell(V_i)$ is a 2-sphere, and so $\chi(\ell(V_i)) > 0$.
    Therefore equations~(\ref{eqn-extscale}) and~(\ref{eqn-extsum}) give
    \begin{alignat}{2}
        \chiqo(\lambda \mathbf{u}) &= \lambda \chiqo(\mathbf{u}) &\quad
            &\mbox{for all $\mathbf{u} \in F$ and $\lambda \geq 0$};
            \label{eqn-g-scales} \\
        \chiqo(\mathbf{u} + \mathbf{w}) &\leq
            \chiqo(\mathbf{u}) + \chiqo(\mathbf{w}) &
            &\mbox{for all $\mathbf{u},\mathbf{w} \in F$}. \nonumber
    \end{alignat}
    That is, $\chiqo$ is a \emph{convex} function on $F$.

    We are now able to exploit an analogue of the functional that Casson
    uses in standard coordinates.  Define the function
    $g:F \to \R$ by $g(\mathbf{u}) = \chiqo(\mathbf{u})-O(\mathbf{u})$,
    where $O(\mathbf{u})$ is the sum of all octagonal coordinates in
    $\mathbf{u}$.  Since $\chiqo$ is convex and $O$ is clearly linear,
    it follows that $g$ is convex also.  Therefore the maximum of $g$ is
    achieved at a vertex of the face $F$.  Let this vertex be
    $\mathbf{m} \in F$.

    Our original almost normal 2-sphere $S$ has $g(\krep{S}) = 1$, since
    $S$ has Euler characteristic two, precisely one octagonal disc,
    and no vertex linking components.  Given that $\krep{S} \in F$, it
    follows that $g(\mathbf{m}) > 0$ also.
    Using the fact that $\projqo$ is a rational polytope, we can define
    $\mathbf{m}' \in \Z^{6n}$ to be the smallest positive
    multiple of $\mathbf{m}$ with all integer coordinates.

    Given that $F \subseteq \projqo$ and that every vector in $F$ satisfies
    the {\quadoct} constraints, it follows that the extension
    $\extmap(\mathbf{m}')$ satisfies all the conditions of admissibility
    in $\R^{10n}$
    except perhaps the requirement that the unique octagonal coordinate
    is set to one---instead we might have multiple octagonal discs, or
    we might have none at all.  We can therefore reconstruct an embedded
    surface $S'$ with standard vector representation
    $\vrep{S'}=\extmap(\mathbf{m}')$, where $S'$ is one of the following:
    \begin{itemize}
        \item \label{en-vtx-an}
        an octagonal almost normal surface;
        \item \label{en-vtx-anmany}
        like an octagonal almost normal surface but with more than one
        octagonal disc;
        \item \label{en-vtx-normal}
        an embedded normal surface with no octagonal discs at all.
    \end{itemize}

    We can show that the surface $S'$ is connected as follows.
    Suppose that $S'$ consists of distinct components $S'_1,\ldots,S'_t$
    where $t>1$.  Then in {\quadoct} coordinates we have
    $\mathbf{m}' = \krep{S'} = \sum \krep{S'_i}$,
    and since $\mathbf{m}'$ is the smallest integer multiple of a vertex
    of $\projqo$ it follows that all but one of the integer vectors
    $\krep{S'_1},\ldots,\krep{S'_t}$ must be zero.
    Therefore all but one of the components $S'_i$ are vertex links,
    which is impossible because the standard vector representation
    $\vrep{S'}$ was constructed using the extension map $\extmap$.

    From equation~\ref{eqn-g-scales} we have
    $\chiqo(\mathbf{m'}) - O(\mathbf{m'}) > 0$, and because $S'$ is
    connected it follows that
    $2 \geq \chi(S') > O(\mathbf{m'}) \geq 0$.  We must therefore be in
    one of the following situations:
    \begin{enumerate}[(i)]
        \item $\chi(S') = 2$ and $O(\mathbf{m'}) = 0$.

        In this case $S'$ is an embedded normal 2-sphere.
        Since our triangulation $\tri$ is 0-efficient, it follows that
        $S'$ is a vertex link and therefore $\krep{S'}=\mathbf{0}$,
        contradicting the fact that $\krep{S'}$ is a
        positive multiple of the vertex $\mathbf{m} \in \projqo$.

        \item $\chi(S') = 1$ and $O(\mathbf{m'}) = 0$.

        In this case $S'$ is an embedded normal projective plane.
        Since $\tri$ is orientable, $S'$ must be a one-sided surface that
        doubles to an embedded normal sphere, giving the same
        contradiction as above.

        \item $\chi(S') = 2$ and $O(\mathbf{m'}) = 1$.
        \label{en-chi-ansphere}

        In this case $S'$ has precisely one octagonal disc, and is
        therefore an octagonal almost normal 2-sphere.
    \end{enumerate}

    The only case that does not yield a contradiction is
    (\ref{en-chi-ansphere}).  Since $\krep{S'}$ is a positive multiple of
    the vertex $\mathbf{m} \in \projqo$,
    it follows that $S'$ is the {\quadoct} vertex almost normal
    2-sphere that we seek.
\end{proof}

\section{Enumeration Algorithms} \label{s-enumeration}

In this section we examine the practical issue of enumerating vertex
almost normal surfaces.  We do not go into the full details of the enumeration
algorithms, since they are intricate enough to form the subjects of
papers themselves \cite{burton08-dd,burton09-convert}.  However, we do
explain in broad terms why the algorithms used for enumerating
\emph{normal} surfaces can also be used to enumerate \emph{almost normal}
surfaces in both standard and {\quadoct} coordinates, with no
unexpected changes.

The layout of this section is as follows.  We begin in
Section~\ref{s-enumeration-direct} with the direct
enumeration algorithm, which is based on a filtered double description method.
In Section~\ref{s-enumeration-convert} we discuss the conversion algorithm
from {\quadoct} to standard coordinates, which allows us to enumerate
vertex surfaces in standard coordinates substantially faster than
through a direct enumeration.  We conclude in
Section~\ref{s-enumeration-notes} with some further notes
on the implementation and use of these algorithms.

The key observations that we make for {\quadoct} coordinates are:
\begin{enumerate}[(i)]
    \item Enumerating vertex surfaces in {\quadoct} coordinates
    is a simple matter of applying the direct enumeration algorithm
    of \cite{burton08-dd} ``out of the box'', though we cannot enforce the
    ``one and only one octagon'' constraint until the algorithm has finished.

    \item \label{en-conseq-convert}
    Likewise, we can use the conversion algorithm of
    \cite{burton09-convert} out of the box to convert
    the vertices of the {\quadoct} projective solution space into the
    vertices of the standard projective solution space, though again we must
    be careful with our use of the ``one and only one octagon'' constraint.

    \item As a consequence of (\ref{en-conseq-convert}), we can use
    {\quadoct} coordinates to substantially improve the speed of
    high-level topological algorithms, even \emph{without} specific
    results such as Theorem~\ref{t-qo-sphere} that allow us
    to focus only on {\quadoct} coordinates.
\end{enumerate}

\subsection{Direct Enumeration} \label{s-enumeration-direct}

At its core, the enumeration of vertex \emph{normal} surfaces uses a
combination of the double description method of
Motzkin et al.~\cite{motzkin53-dd} and the filtering method of Letscher.
The details can be found in \cite{burton08-dd}, but essentially the
algorithm runs as follows.

Suppose we are working in the vector space $\R^d$ with $g$ matching
equations (so for a closed one-vertex triangulation we have
$d=7n$ and $g=6n$ in standard coordinates, or $d=3n$ and $g=n+1$ in
quadrilateral coordinates).
We inductively create a series of polytopes $P_0,\ldots,P_g \subseteq \R^d$
described by their vertex sets $V_0,\ldots,V_g$ according to the
following procedure:
\begin{itemize}
    \item The polytope $P_0$ is the intersection of the non-negative
    orthant in $\R^d$ with the projective hyperplane
    $\{\mathbf{x}\in\R^d\,|\,\sum x_i=1\}$, and the corresponding vertex
    set $V_0$ consists of all unit vectors in $\R^d$.
    \item The polytope $P_i$ is created by intersecting $P_{i-1}$ with a
    hyperplane corresponding to the $i$th matching equation.  The vertex
    set $V_i$ consists of vertices $\mathbf{v} \in V_{i-1}$
    that lie inside this hyperplane, as well as combinations of pairs of
    vertices $\mathbf{u},\mathbf{v} \in V_{i-1}$ that lie on opposite sides of
    this hyperplane.
\end{itemize}
The final polytope $P_g$ is the projective solution space, and
by rescaling the vertex set $V_g$ into integer coordinates
we can reconstruct the corresponding vertex normal surfaces.

Although this procedure accounts for non-negativity and the matching equations,
we have not made use of the quadrilateral constraints.  This is where
the filtering method of Letscher comes into play.
The key idea is to enforce the quadrilateral constraints at every stage
of the double description method---specifically,
we strip all vertices from each set $V_i$ that do not satisfy the
quadrilateral constraints.
Although this means that each set $V_i$ does not give a complete
representation of the polytope $P_i$, by filtering out ``bad'' vertices at
every stage of the algorithm we can
tame the exponential explosion in the size of the
vertex sets $V_i$, improving the performance of the algorithm in
practice by a substantial amount.

It is useful to understand \emph{why} this enumeration algorithm works,
so that we can see whether it can also be used with almost normal
surfaces.  In essence, the key reasons are as follows:
\begin{itemize}
    \item The double description method of Motzkin et al.\ works because
    the projective solution space is a convex polytope, defined as the
    intersection of the non-negative orthant with the projective
    hyperplane $\sum x_i = 1$ and an additional hyperplane for each
    matching equation.

    \item The filtering method of Letscher works because the
    quadrilateral constraints satisfy the following key properties:

    \displaytext{Property A}{The quadrilateral constraints are satisfied
    on a union of faces of the non-negative orthant, and therefore on a
    union of faces of the projective solution space.}

    \displaytext{Property B}{Let $\mathbf{u}$ and $\mathbf{v}$ be
        non-negative vectors in $\R^d$.  If either $\mathbf{u}$ or $\mathbf{v}$
        does not satisfy the quadrilateral constraints, then the combination
        $\alpha \mathbf{u} + \beta \mathbf{v}$ can never satisfy the
        quadrilateral constraints for any $\alpha,\beta > 0$.}

    Note that property~B is an immediate consequence of property~A,
    and that property~A holds because each constraint is of the form
    ``at most one of the coordinates $\{x_i\,|\,i \in C\}$ may be non-zero'',
    where $C \subseteq \{1,\ldots,d\}$ is some set of coordinate positions.
\end{itemize}

We now turn our attention to the enumeration of vertex
\emph{almost normal} surfaces, in both standard almost normal
coordinates and {\quadoct} coordinates.
\begin{itemize}
    \item Once again, the projective solution space is the intersection
    of the non-negative orthant with the projective hyperplane
    $\sum x_i = 1$ and an additional hyperplane for each matching
    equation.  As a result, the double description method of
    Motzkin et al.\ works seamlessly with almost normal surfaces.

    \item Like the original quadrilateral constraints,
    the quadrilateral-octagonal constraints for almost normal surfaces
    are each of the form
    ``at most one of the coordinates $\{x_i\,|\,i \in C\}$ may be non-zero'',
    where $C \subseteq \{1,\ldots,d\}$ is some set of coordinate positions.
    As a result, both of the above
    properties~A and~B hold, and we can seamlessly use
    the filtering method of Letscher to enforce the quadrilateral-octagon
    constraints at each stage of the double description method.
\end{itemize}

However, Theorems~\ref{t-an-admissible} and~\ref{t-qo-admissible} show
that octagonal almost normal surfaces come with an additional constraint:

\displaytext{Constraint ($\star$)}{For $\mathbf{v}$ to be the
    vector representation of an octagonal almost normal surface,
    there must be some non-zero octagonal coordinate in $\mathbf{v}$,
    and this coordinate must be set to one.}

It is clear that we cannot enforce ($\star$) on the projective
solution space, since there the coordinates of each vector are rationals
(not integers) that \emph{sum} to one.  From the viewpoint of the
projective solution space, this constraint is not so much a property of
a vector $\mathbf{v}$, but rather a property of the smallest multiple
of $\mathbf{v}$ with integer coordinates.  It follows that the
final constraint ($\star$) cannot be inserted verbatim into the
filtering process.

We might instead consider enforcing a weaker version of ($\star$),
where every vector $\mathbf{v} \in V_i$ must have some non-zero octagonal
coordinate (therefore eliminating vectors that yield no
octagons at all).  However, this variant is also unsuitable for
filtering, since it satisfies neither of the properties~A or~B.
In essence, the reason we must keep track of \emph{normal} surfaces (with no
octagons) is so that we can combine them with old almost normal surfaces to
create new almost normal surfaces.

The conclusion then is that we must forget the final condition ($\star$)
while the algorithm is running, and enforce it only once we have our
final set of vertices $V_g$.  Note that this is not a severe
penalty---the quadrilateral-octagon constraints already ensure that we
have at most one octagon \emph{type} in each vector, and so our only
inefficiency is that we must carry around vectors that yield too many
octagons of a single type, or that yield no octagons at all.

As a final note, the paper \cite{burton08-dd} offers a number of
additional optimisations to the core filtered double description method.
As with the core algorithm, these optimisations can also be used seamlessly
with octagonal almost normal surfaces, as long as we remember to
delay the constraint ($\star$) until after the algorithm has finished.

\subsection{The Conversion Algorithm} \label{s-enumeration-convert}

The paper \cite{burton09-convert} describes a conversion algorithm from
quadrilateral to standard coordinates for normal surfaces.  The purpose
of this algorithm is not just to convert \emph{vectors} between
coordinate systems (which is fairly straightforward), but to convert
entire \emph{solution sets}.  That is, the algorithm begins with the set
of all vertices of the \emph{quadrilateral} projective solution space
that satisfy the quadrilateral constraints, and converts this to the
(typically much larger) set of all vertices of the \emph{standard}
projective solution space that satisfy the quadrilateral constraints.
We are therefore able to recover the standard vertex normal surfaces that are
``lost'' in quadrilateral coordinates.

As a result, this algorithm allows us to enumerate all
standard vertex normal surfaces using the following two-step procedure:
\begin{enumerate}
    \item Use direct enumeration (as described in
    Section~\ref{s-enumeration-direct}) to enumerate all vertices
    of the \emph{quadrilateral} projective solution space that satisfy the
    quadrilateral constraints.
    \item Use the conversion algorithm (as described below) to recover all
    vertices of the \emph{standard} projective solution space that satisfy the
    quadrilateral constraints, and thereby the set of all standard
    vertex normal surfaces.
\end{enumerate}
Experimentation shows the conversion algorithm to have negligible running
time, and as a result this two-step procedure is found to be orders of
magnitude faster than a direct enumeration in standard coordinates
\cite{burton09-convert}.
The overall outcome is that we can harness the \emph{speed} of quadrilateral
coordinates without the need to prove additional \emph{theorems} in
quadrilateral coordinates (such as we do here for {\quadoct} coordinates
in Theorem~\ref{t-qo-sphere}).

Broadly speaking, the conversion algorithm operates as follows.
Suppose the triangulation $\tri$ is formed from $n$ tetrahedra,
and contains the $m$ vertices $V_1,\ldots,V_m$.
We inductively construct lists of vectors
$L_0,\ldots,L_m \subset \R^{7n}$ according to the following procedure:
\begin{itemize}
    \item The list $L_0$ contains the input for the algorithm, which consists
    of all vertices of the quadrilateral projective solution space that
    satisfy the quadrilateral constraints.  Each vector is extended from
    $\R^{3n}$ to $\R^{7n}$ by solving the standard matching equations.
    \item Each subsequent list $L_i$ generates all non-negative vectors
    in $\R^{7n}$ that satisfy the quadrilateral constraints, and that
    can be formed by (i)~combining vectors from the
    previous list $L_{i-1}$ and then (ii)~adding or subtracting a multiple
    of the vertex linking vector $\vrep{\ell(V_i)}$.
    This list $L_i$ is constructed from $L_{i-1}$ using an algorithm
    similar to the filtered double description method of
    Section~\ref{s-enumeration-direct}, though there are additional
    complications.
\end{itemize}
The final list $L_m$ becomes the set of all vertices of the standard
projective solution space that satisfy the quadrilateral constraints.

The key reason \emph{why} the conversion algorithm works (in addition to
those reasons discussed earlier in Section~\ref{s-enumeration-direct})
is because of the following relationship between
standard and quadrilateral coordinates:

\displaytext{Property C}{The projection from standard to
quadrilateral coordinates (where we simply remove the triangular
coordinates $\{t_{i,j}\}$) is a linear map from the standard projective
solution space to the quadrilateral projective solution space.
Moreover, the kernel of this map is generated by the standard vector
representations of the vertex links.}

We can now see why the conversion algorithm works seamlessly for almost
normal surfaces.  If we replace standard and quadrilateral
\emph{normal} coordinates with standard and {\quadoct} \emph{almost
normal} coordinates, the critical property~C still holds.  We can
thereby follow through the algorithm and its proof as presented in
\cite{burton09-convert}, and we find that the algorithm works as
expected.

Specifically, what this algorithm achieves for almost normal surfaces
is to begin with the set of all vertices of the \emph{\quadoct}
projective solution space that satisfy the quadrilateral-octagon
constraints, and to convert this to the
(again typically much larger) set of all vertices of the
\emph{standard almost normal} projective solution space that satisfy the
quadrilateral-octagon constraints.

As with direct enumeration, there is a catch involving the constraint
($\star$), which we recall insists that each vector contain a non-zero
octagonal coordinate whose value is set to one.  For the same reasons
as discussed in Section~\ref{s-enumeration-direct}, we cannot enforce
the constraint ($\star$) at each stage of the conversion algorithm.
More importantly, \emph{we cannot enforce ($\star$) on the set of input
vectors}---the input must be the set of \emph{all} vertices of the
{\quadoct} solution space that satisfy the quadrilateral-octagon
constraints, whether these vertices yield many octagonal discs or whether they
yield none.  Once again, we must delay the enforcement of ($\star$)
until the entire algorithm has finished running and we are ready to
present our final results.

As a final note, we observe that the conversion algorithm allows us to
enumerate all standard vertex almost normal surfaces using the following
two-step procedure:
\begin{enumerate}
    \item Use direct enumeration to enumerate all vertices
    of the \emph{\quadoct} projective solution space that satisfy the
    quadrilateral-octagon constraints, taking care not to
    enforce the extra constraint ($\star$).
    \item Use the conversion algorithm to recover all vertices
    of the \emph{standard almost normal} projective solution space that
    satisfy the quadrilateral-octagon constraints, and thereby the set of all
    standard vertex almost normal surfaces.
\end{enumerate}
As is the case with normal surfaces, experimentation shows that this
two-step procedure runs orders of
magnitude faster than a direct enumeration in standard coordinates.

\subsection{Further Notes} \label{s-enumeration-notes}

We finish with some additional notes on the implementation and use
of the enumeration and conversion algorithms.

Our first observation is the following.
Although we work in $10n$ and $6n$ dimensions for standard
almost normal and {\quadoct} coordinates respectively, these large
dimensions seem wasteful.  The quadrilateral-octagon constraints guarantee
at most one non-zero octagonal coordinate for each vector, so
a different possibility might be to ``select'' a desired octagonal disc
type and then work in $7n+1$ or $3n+1$ dimensions instead.

Casson has suggested such a technique \cite{jaco02-algorithms-essential},
where we iterate through all $3n$ possible octagonal disc types,
and for each such disc type we augment a traditional coordinate
system for \emph{normal} surfaces with a single
coordinate for this octagon.  As a result we obtain $3n$ distinct
projective solution spaces, each with the significantly smaller
dimension $7n+1$ or $3n+1$.

Although this reduction in dimensions is appealing, in practice both
procedures essentially perform the
same computations---by working in a full set of standard almost normal
or {\quadoct} coordinates, we are simply performing the $3n$
smaller enumerations of Casson ``simultaneously''.  This is
because the quadrilateral-octagon constraints enforce at most one
non-zero octagonal coordinate, and so the set of vertices at each
stage of the enumeration algorithm is essentially the union of
all $3n$ vertex sets in Casson's scheme, with no additional ``junk''
vertices that must later be thrown away.

More importantly however, any
enumeration of vertex \emph{almost normal} surfaces includes an
implicit enumeration
of vertex \emph{normal} surfaces, since the {\quadfix}-octagon
constraints allow surfaces with no octagons at all.  To this end, a
single ``simultaneous'' enumeration in $10n$ or $6n$ dimensions should
be more efficient---if we run $3n$ independent enumerations in $3n$
different projective solution spaces, then we effectively perform
this implicit (and potentially slow \cite{burton09-convert})
normal surface enumeration $3n$ distinct times.

Our second observation involves the constraint ($\star$) from
Section~\ref{s-enumeration-direct}---recall that this is the final
condition of Theorems~\ref{t-an-admissible} and~\ref{t-qo-admissible},
where we insist that there is some non-zero octagonal coordinate, and
that this coordinate is set to one.  We have already observed that
($\star$) cannot be enforced during either the enumeration or conversion
algorithms, and that we must instead apply it as a filter after the
algorithms have finished.

It is worth noting that there are situations in which we do not want to
enforce ($\star$) at all, even after the algorithms have run.  We have
already seen one example in Section~\ref{s-enumeration-convert},
where the conversion algorithm requires that we do not enforce ($\star$)
on the vertices in {\quadoct} coordinates.  Another example arises in
applications where we use the \emph{vertex} almost normal surfaces as
a basis to generate \emph{all} almost normal surfaces (possibly with
some limitations such as genus to keep the list finite).\footnote{Such
applications do appear in the literature; see \cite{lackenby08-tunnel}
and \cite{rubinstein04-smallsfs} for examples.}

In this case we cannot enforce ($\star$) either, since it is possible
to obtain new admissible vectors through combinations of old vectors that
break ($\star$).  For instance, we could combine an almost normal
surface with a plain normal surface (having no octagonal discs) to obtain a
new almost normal surface, or we could combine a surface with two octagons
with a plain normal surface to obtain the double of a new
almost normal surface, whereupon we simply divide by two.

\section{Measuring Performance} \label{s-performance}

In this section we measure the practical benefits of using {\quadoct}
coordinates.  We do this by experimentally comparing running times for
the 3-sphere recognition algorithm, using different coordinate systems
for the critical step in which we enumerate vertex almost normal surfaces.

For our experiments we use the 15 smallest-volume homology 3-spheres
from the closed hyperbolic census of Hodgson and Weeks
\cite{hodgson94-closedhypcensus}.  The reason for choosing homology
3-spheres is because we want to focus on almost normal surface
enumeration---manifolds with non-trivial homology are eliminated in
the first step of the 3-sphere recognition algorithm, and experience suggests
that most \emph{real} 3-spheres simplify to trivially small pieces
during the decomposition
procedure in the second step of the algorithm.\footnote{It is, however,
possible to construct arbitrarily large 0-efficient triangulations of
the 3-sphere \cite{jaco03-0-efficiency}.}

We use 0-efficient triangulations of these homology 3-spheres,
with sizes ranging from 10 to 14 tetrahedra.
Table~\ref{tab-exptspheres} lists the volume of each manifold,
the size of each triangulation, and the Dehn filling given by
Hodgson and Weeks to reconstruct each manifold.  Each Dehn filling is
applied to a cusped manifold from the hyperbolic census of
Hildebrand and Weeks \cite{hildebrand89-cuspedcensusold}.

\begin{table}[htb]
\centering
\begin{tabular}{c|c|c}
\emph{Hyperbolic volume} & \emph{Dehn filling} & \emph{Tetrahedra} \\
\hline
1.39850888 & $\mathrm{m004}(\phantom{-}1, 2)$ & 10 \\
1.91221025 & $\mathrm{m011}(\phantom{-}2, 3)$ & 11 \\
2.22671790 & $\mathrm{m015}(-3, 2)$ & 11 \\
2.25976713 & $\mathrm{m038}(\phantom{-}1, 2)$ & 11 \\
2.51622138 & $\mathrm{m081}(\phantom{-}3, 2)$ & 12 \\
2.62940540 & $\mathrm{m032}(\phantom{-}5, 2)$ & 12 \\
2.71245881 & $\mathrm{m120}(-3, 2)$ & 12 \\
2.86563023 & $\mathrm{m137}(-5, 1)$ & 13 \\
2.98683705 & $\mathrm{m137}(\phantom{-}5, 1)$ & 13 \\
3.08052001 & $\mathrm{m154}(-2, 3)$ & 12 \\
3.08386105 & $\mathrm{m137}(-6, 1)$ & 14 \\
3.16236729 & $\mathrm{m137}(\phantom{-}6, 1)$ & 14 \\
3.40043687 & $\mathrm{m222}(-3, 2)$ & 13 \\
3.44586464 & $\mathrm{m199}(-5, 1)$ & 14 \\
3.54091542 & $\mathrm{m260}(-3, 2)$ & 13
\end{tabular}
\caption{The 15 homology 3-spheres used for experimentation}
\label{tab-exptspheres}
\end{table}

For each of our 15 triangulations, we compare the running times for the
following two procedures:
\begin{itemize}
    \item 3-sphere recognition as given in Algorithm~\ref{a-sphere},
    using standard almost normal coordinates for the vertex enumeration in
    step~\ref{en-sphere-enumerate} of the algorithm;
    \item The same algorithm, but using {\quadoct} coordinates for the
    vertex enumeration in step~\ref{en-sphere-enumerate}, as authorised
    by Theorem~\ref{t-qo-sphere}.
\end{itemize}
All experiments were performed on a single 2.3\,GHz AMD Opteron
processor using the software package {\regina} \cite{regina,burton04-regina}.

The running times are plotted in Figure~\ref{fig-times} using
log scales, and the results are extremely pleasing.
Even in the worst case, {\quadoct} coordinates still
improve the running time by a factor of 30.  At the other extreme,
for several triangulations we find that {\quadoct} coordinates improve
the running time by factors of thousands, with an increase of just under
$5000$ times the speed for the best example.

\begin{figure}[htb]
\centering
\includegraphics[scale=0.6]{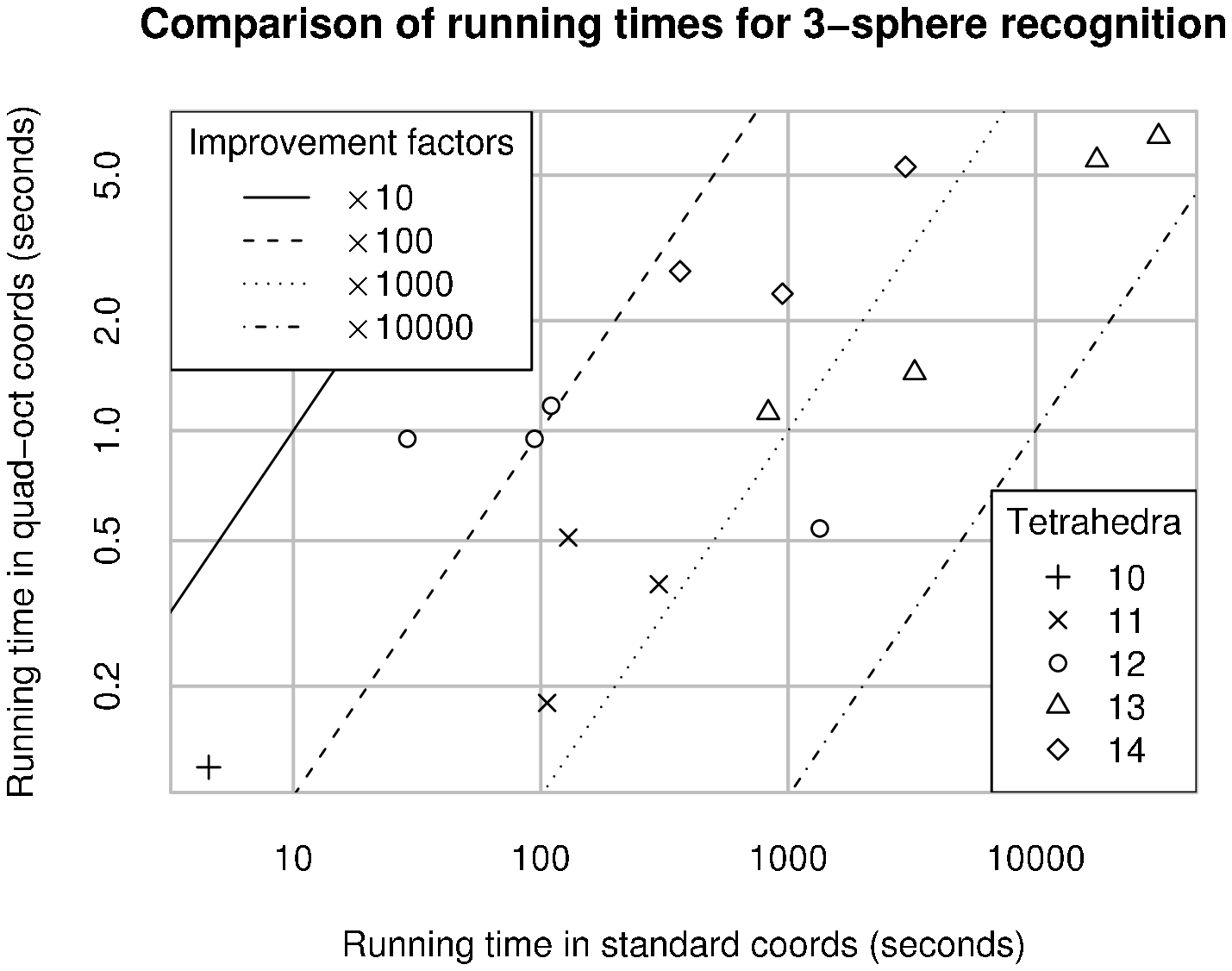}
\caption{Performance comparisons for 3-sphere recognition}
\label{fig-times}
\end{figure}

\section{Joint Coordinates} \label{s-joint}

We finish this paper with an exploratory discussion of
\emph{joint coordinates} for octagonal almost normal surfaces.
Where {\quadoct} coordinates reduce the dimension of the underlying
vector space from $10n$ to $6n$, joint coordinates reduce this even
further from $6n$ to $3n$.  The key idea is to use negative coordinates
for octagons and positive coordinates for quadrilaterals, noting from
the quadrilateral-octagon constraints that the two cannot occur together
within the same tetrahedron.

Joint coordinates have a number of appealing properties.  Not only is
their dimension small, but they carry the same information as {\quadoct}
coordinates (in contrast to the step from standard to {\quadoct} coordinates,
where we lose information about vertex links).  Moreover, joint coordinates
adhere to almost the same constraints in $\R^{3n}$ as Tollefson's quadrilateral
coordinates for \emph{normal} surfaces---in particular, they satisfy the
original quadrilateral matching equations and quadrilateral constraints
from Section~\ref{s-normal}.

There is a cost however, which is the loss of convexity.  For joint
coordinates, we must allow one coordinate (but no more) to become negative.
As a result we no longer work in the non-negative orthant of
$\R^{3n}$, but rather the non-negative orthant \emph{and} the $3n$
``almost non-negative'' orthants that border it.  This has severe
consequences for the enumeration algorithms described in
Section~\ref{s-enumeration}, which rely on convexity as a core requirement.

Nevertheless, it is pleasing to be able to express octagonal almost normal
surfaces using essentially the same coordinate system as normal surfaces,
and to do so in a way that portrays them as a natural extension of Tollefson's
original framework (where our extension involves simply stepping
``just outside'' the non-negative orthant).

The layout of this section is as follows.
We begin by describing the way in which we number quadrilateral and octagon
types within each tetrahedron, which must be done carefully for joint
coordinates to work.  Following this, we define
joint coordinates and develop the corresponding uniqueness and
admissibility results.  We then present an example using a
one-tetrahedron triangulation, where we show graphically how the vector
representations of normal and almost normal surfaces appear in the
corresponding solution space in $\R^3$.  To finish, we discuss
how the loss of convexity affects both the projective solution space and
the enumeration algorithms.

\begin{definition}[Quadrilateral and Octagon Numbering] \label{d-octquadnumber}
    Let $\Delta$ be any tetrahedron in some compact 3-manifold triangulation.
    Within $\Delta$, we number the quadrilateral and octagon types $1$,
    $2$ and $3$ so that, for each $i \in \{1,2,3\}$,
    the two edges of $\Delta$ that quadrilaterals of type $i$ \emph{never} meet
    are the same two edges of $\Delta$ that octagons of type $i$ meet
    \emph{twice}.  This correspondence between quadrilaterals and octagons
    is illustrated in Figure~\ref{fig-octquadtypes}.

    \begin{figure}[htb]
    \centering
    \includegraphics[scale=0.4]{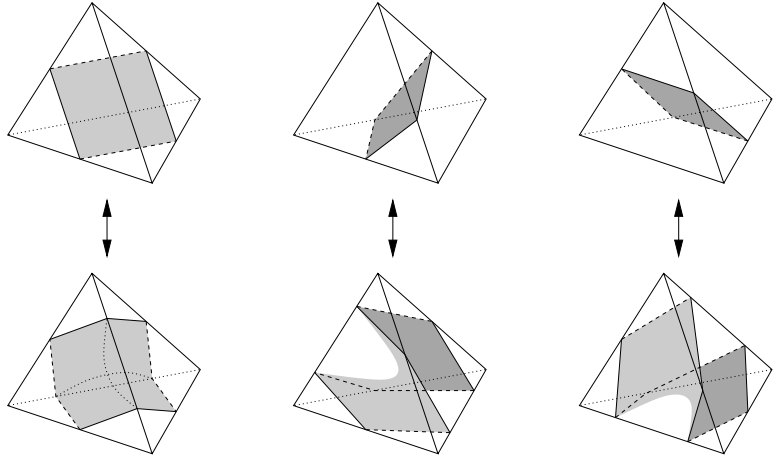}
    \caption{The correspondence between quadrilaterals and octagons}
    \label{fig-octquadtypes}
    \end{figure}
\end{definition}

This numbering scheme is very natural, in that the correspondence between
quadrilaterals and octagons reflects the natural symmetries of these
discs within a tetrahedron.  One can also think of an octagon of
type~$i$ as being obtained from a quadrilateral of type~$i$ by pulling
each edge of the quadrilateral around and over the nearby vertex of the
tetrahedron.

With our numbering scheme in place, we are now ready to define joint
coordinates.  Since we plan to present joint coordinates as a
natural extension of Tollefson's quadrilateral coordinates, we define
them for \emph{both} normal and almost normal surfaces as follows.

\begin{definition}[Joint Vector Representation] \label{d-joint-vecrep}
    Let $\tri$ be a compact 3-manifold triangulation formed from the $n$
    tetrahedra $\Delta_1,\ldots,\Delta_n$, and let $S$ be an embedded normal
    or octagonal almost normal surface in $\tri$.  For each tetrahedron
    $\Delta_i$, let $q_{i,1}$, $q_{i,2}$ and $q_{i,3}$ denote the number of
    quadrilateral discs of each type, and let
    $k_{i,1}$, $k_{i,2}$ and $k_{i,3}$ denote the number of
    octagonal discs of each type in $\Delta_i$ contained in the surface $S$.

    Then the \emph{joint vector representation} of $S$, denoted
    $\jrep{S}$, is the $3n$-{\dimfix} vector
    \begin{alignat*}{2}
    \jrep{S}~=\,(~
        & q_{1,1}-k_{1,1},\,q_{1,2}-k_{1,2}&&,\,q_{1,3}-k_{1,3}\ ;\\
        & q_{2,1}-k_{2,1},\,q_{2,2}-k_{2,2}&&,\,q_{2,3}-k_{2,3}\ ;\\
        & \ldots                           &&,\,q_{n,3}-k_{n,3}\ ).
    \end{alignat*}
\end{definition}

Our first task is to show that joint coordinates in $3n$ dimensions retain
all of the information carried by {\quadoct} coordinates in $6n$ dimensions.

\begin{lemma} \label{l-joint-vecrep}
    Let $\tri$ be a compact 3-manifold triangulation, and let
    $S$ and $S'$ be embedded normal or octagonal almost normal surfaces in
    $\tri$.  Then the joint vector representations $\jrep{S}$ and $\jrep{S'}$
    are equal if and only if the {\quadoct} vector representations
    $\krep{S}$ and $\krep{S'}$ are equal.
\end{lemma}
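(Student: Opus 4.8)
The plan is to exhibit the map $\krep{S} \mapsto \jrep{S}$ — which on all of $\R^{6n}$ is just the many-to-one linear surjection onto $\R^{3n}$ sending $(q_{i,\ell},k_{i,\ell})$ to $q_{i,\ell}-k_{i,\ell}$ — as an \emph{injection} once restricted to vectors obeying the quadrilateral-octagon constraints. Every quadoct vector representation $\krep{S}$ does obey those constraints: by Theorem~\ref{t-qo-admissible} when $S$ is octagonal almost normal, and by Theorem~\ref{t-admissible} when $S$ is an embedded normal surface (in which case, in addition, all octagonal coordinates vanish). Since the joint representation is assembled tetrahedron-by-tetrahedron, it suffices to run the argument one tetrahedron at a time and then combine.

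First I would fix a tetrahedron $\Delta_i$ and record exactly what the quadrilateral-octagon constraints say about the six coordinates $q_{i,1},q_{i,2},q_{i,3},k_{i,1},k_{i,2},k_{i,3}$ of $\krep{S}$: at most one of them is non-zero. Reading off the three joint coordinates $j_{i,\ell}=q_{i,\ell}-k_{i,\ell}$, this forces the triple $(j_{i,1},j_{i,2},j_{i,3})$ into exactly one of three shapes — all zero, exactly one entry positive, or exactly one entry negative — and in each shape the original six coordinates are uniquely recoverable: all zero in the first case; $q_{i,\ell}=j_{i,\ell}$ with the other five zero in the second; $k_{i,\ell}=-j_{i,\ell}$ with the other five zero in the third. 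Hence within each tetrahedron the joint coordinates determine the quadrilateral and octagonal coordinates.

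Combining over all $n$ tetrahedra, $\jrep{S}$ determines $\krep{S}$ whenever $\krep{S}$ satisfies the quadrilateral-octagon constraints; applying this to $S$ and $S'$ gives the ``only if'' direction. The ``if'' direction is immediate, since equal quadrilateral and octagonal coordinates have equal differences. I do not anticipate a genuine obstacle here — the one point that must be handled correctly is that the inversion is legitimate \emph{only} because we may invoke the quadrilateral-octagon constraints, which is precisely why the statement speaks of vector representations of actual surfaces rather than of arbitrary vectors in $\R^{3n}$.
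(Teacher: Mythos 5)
Your proposal is correct and follows essentially the same route as the paper: both arguments invert the map $j_{i,t}=q_{i,t}-k_{i,t}$ by using the quadrilateral--octagon constraints (together with non-negativity) to conclude that the sign of $j_{i,t}$ determines which of $q_{i,t}$, $k_{i,t}$ is non-zero and what its value is. The only cosmetic difference is that you organise the case analysis tetrahedron-by-tetrahedron while the paper works one coordinate pair $(q_{i,t},k_{i,t})$ at a time, needing only that these two cannot both be positive.
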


Here we define the {\quadoct} vector representation for an embedded
normal surface in the obvious way, by setting all octagonal
coordinates to zero.

\begin{proof}
    It is clear that if $\krep{S}=\krep{S'}$ then $\jrep{S}=\jrep{S'}$.
    Suppose conversely that $\jrep{S}=\jrep{S'}$, and consider the
    $(i,t)$th coordinate $j_{i,t} = q_{i,t} - k_{i,t}$.

    For both $S$ and $S'$ we know that $q_{i,t} \geq 0$ and $k_{i,t} \geq 0$.
    Moreover, since $S$ and $S'$ satisfy the quadrilateral-octagon
    constraints, we know that they can each have $q_{i,t} > 0$ or
    $k_{i,t} > 0$ but not both.  It follows that for both $S$ and $S'$ we
    have one of the following situations:
    \begin{itemize}
        \item $j_{i,t} = 0$, in which case $q_{i,t} = k_{i,t} = 0$;
        \item $j_{i,t} = K > 0$, in which case $q_{i,t} = K$ and $k_{i,t} = 0$;
        \item $j_{i,t} = -K < 0$, in which case $q_{i,t} = 0$ and $k_{i,t} = K$.
    \end{itemize}
    That is, we can reconstruct the individual constituents $q_{i,t}$
    and $k_{i,t}$ from the joint coordinate $j_{i,t}$, whereupon we
    obtain $\krep{S}=\krep{S'}$.
\end{proof}

As an immediate consequence of Lemmas~\ref{l-joint-vecrep}
and~\ref{l-qo-vecrep}, we obtain the following uniqueness result for
joint vector representations:

\begin{corollary} \label{c-joint-vecrep}
    Let $\tri$ be a compact 3-manifold triangulation, and let
    $S$ and $S'$ be embedded normal or octagonal almost normal surfaces
    in $\tri$.  Then the joint vector representations
    $\jrep{S}$ and $\jrep{S'}$ are equal if and only if
    either (i)~the surfaces $S$ and $S'$ are normal isotopic,
    or (ii)~$S$ and $S'$ can be made normal isotopic by adding or
    removing vertex linking components.
\end{corollary}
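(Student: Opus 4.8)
The plan is to chain together the two preceding lemmas, with a small amount of case-checking to cover the possibility that $S$ and $S'$ are of different types. By Lemma~\ref{l-joint-vecrep}, the equality $\jrep{S} = \jrep{S'}$ is equivalent to the equality $\krep{S} = \krep{S'}$, so it suffices to show that $\krep{S} = \krep{S'}$ holds if and only if either $S$ and $S'$ are normal isotopic, or they can be made so by adding or removing vertex linking components.

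To establish this I would split into cases according to whether each of $S$ and $S'$ is an embedded normal surface or an octagonal almost normal surface. If both are octagonal almost normal surfaces, the claimed biconditional is precisely Lemma~\ref{l-qo-vecrep}. If both are embedded normal surfaces, then all octagonal coordinates of $\krep{S}$ and $\krep{S'}$ vanish and these vectors record exactly the quadrilateral vector representations $\qrep{S}$ and $\qrep{S'}$, so the claim reduces to the second bullet of Lemma~\ref{l-vecrep}.

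The only remaining case is the ``mixed'' one, say $S$ normal and $S'$ octagonal almost normal, and here I would observe that both sides of the biconditional are false, so it holds vacuously. Indeed, $\krep{S}$ has all octagonal coordinates equal to zero whereas $\krep{S'}$ has a strictly positive octagonal coordinate coming from its exceptional octagon, so $\krep{S} \neq \krep{S'}$; and on the other side, a normal isotopy preserves each simplex of $\tri$ and so cannot turn a family of triangles and quadrilaterals into one containing an octagon, while adding or removing vertex links (which are themselves built from triangles) likewise cannot introduce an octagonal piece, so $S$ and $S'$ cannot be made normal isotopic in the permitted way.

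Assembling the three cases yields the corollary. There is no real obstacle here: the genuine work was already carried out in Lemmas~\ref{l-joint-vecrep} and~\ref{l-qo-vecrep}, and the only point requiring any care is the bookkeeping in the mixed case, which is routine.
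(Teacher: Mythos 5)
Your proof is correct and follows essentially the same route as the paper, which presents the corollary as an immediate consequence of Lemmas~\ref{l-joint-vecrep} and~\ref{l-qo-vecrep} (having extended the {\quadoct} representation to normal surfaces by setting all octagonal coordinates to zero). Your explicit case analysis for the both-normal and mixed cases simply fills in bookkeeping that the paper leaves implicit, and it is carried out correctly.
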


We proceed now to give a complete classification of joint vector
representations of embedded normal and octagonal almost normal surfaces.
As indicated earlier, one of the appealing features of joint
coordinates is that this classification corresponds precisely to Tollefson's
theorem for embedded \emph{normal} surfaces
(Theorem~\ref{t-admissible}), except for the fact that we must allow one
coordinate to become negative.

\begin{theorem} \label{t-joint-admissible}
    Let $\tri$ be a compact 3-manifold triangulation formed from
    $n$ tetrahedra.  An integer vector $\mathbf{w} \in \R^{3n}$
    is the joint vector representation of an embedded normal or
    octagonal almost normal surface in $\tri$ if and only if:
    \begin{itemize}
        \item At most one coordinate of $\mathbf{w}$ is negative;
        \item $\mathbf{w}$ satisfies the quadrilateral
        matching equations for $\tri$ (Definition~\ref{d-matchingquad});
        \item $\mathbf{w}$ satisfies the quadrilateral
        constraints for $\tri$ (Definition~\ref{d-quadconst});
        \item If there is a negative coordinate in $\mathbf{w}$,
        then this coordinate is set to $-1$.
    \end{itemize}
    Moreover, such a vector represents an embedded normal surface
    in $\tri$ if and only if all of its coordinates are non-negative.
\end{theorem}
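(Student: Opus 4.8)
The plan is to reduce everything to the {\quadoct} admissibility theorem (Theorem~\ref{t-qo-admissible}) and Tollefson's admissibility theorem in quadrilateral coordinates (Theorem~\ref{t-admissible}), via the coordinate-splitting bijection underlying the proof of Lemma~\ref{l-joint-vecrep}. Given an integer joint vector $\mathbf{w} \in \R^{3n}$ that satisfies the quadrilateral constraints and has at most one negative coordinate, I would split each coordinate $j_{i,t}$ back into a quadrilateral part and an octagonal part: put $q_{i,t} = j_{i,t}$, $k_{i,t} = 0$ when $j_{i,t} \geq 0$, and $q_{i,t} = 0$, $k_{i,t} = -j_{i,t}$ when $j_{i,t} < 0$. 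This produces an integer vector in $\R^{6n}$ (or, when $\mathbf{w} \geq 0$, simply the quadrilateral vector $\mathbf{w}$ itself, all octagonal parts being zero). The whole proof then consists of checking that $\mathbf{w}$ satisfies the four listed conditions if and only if this split vector satisfies the hypotheses of Theorem~\ref{t-qo-admissible} --- respectively of Theorem~\ref{t-admissible} in the case $\mathbf{w} \geq 0$ --- and that the surface so obtained has no octagon exactly when $\mathbf{w} \geq 0$, which gives the ``moreover'' clause. For the converse direction, starting from an embedded normal or octagonal almost normal surface $S$, Definition~\ref{d-an} already forces at most one octagonal disc (and exactly one in the genuinely almost normal case), so the quadrilateral-octagon constraints immediately put $\jrep{S}$ into the stated form.

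Most of this bookkeeping is routine translation. The quadrilateral constraint on $\mathbf{w}$ (at most one of $j_{i,1},j_{i,2},j_{i,3}$ nonzero) becomes part~(\ref{en-qo-quadoct-local}) of the quadrilateral-octagon constraints after splitting; ``at most one negative coordinate'' becomes part~(\ref{en-qo-quadoct-global}) (at most one nonzero octagonal coordinate in all of $\tri$); ``a negative coordinate, if present, equals $-1$'' becomes the requirement that the unique nonzero octagonal coordinate is one; and non-negativity of $\mathbf{w}$ is precisely the statement that no octagon is used. None of these steps depends on the numbering scheme.

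The one substantive point is the matching equations, and here Definition~\ref{d-octquadnumber} does exactly what it is designed to do. Fix a non-boundary edge $e$ and write out Tollefson's quadrilateral matching equation for $e$ (Definition~\ref{d-matchingquad}) in the joint coordinates $j_{i,t}=q_{i,t}-k_{i,t}$; after separating positive and negative parts it reads $\sum_\ell q_{i_\ell,u_\ell} + \sum_\ell k_{i_\ell,d_\ell} = \sum_\ell q_{i_\ell,d_\ell} + \sum_\ell k_{i_\ell,u_\ell}$, where $u_\ell,d_\ell$ are the upward and downward quadrilateral types in the tetrahedron $\Delta_{i_\ell}$ of the cycle around $e$. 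I would then show this is the same equation, term for term, as the {\quadoct} matching equation~(\ref{eqn-qo-matching}) for $e$. By Definition~\ref{d-octquadnumber} the two octagon types meeting $e$ exactly once carry the same two type-numbers as the two quadrilateral types meeting $e$ exactly once, so the only thing to verify is that the octagon sharing its number with the upward quadrilateral is the \emph{downward} octagon, and vice versa; in the notation of Definition~\ref{d-qo-matching} this is the identity $u_\ell' = d_\ell$, $d_\ell' = u_\ell$ flagged in the footnote there. This reduces to a single picture: the operation turning a quadrilateral into the octagon of the same type pulls each of its edges around and over the nearby vertex of the tetrahedron, and near $e$ this swaps which side of the hinge lies toward the upper end of $e$ --- turning the upward quadrilateral into the downward octagon. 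Granting this identity, the quadrilateral matching equations for $\mathbf{w}$ are precisely the {\quadoct} matching equations for the split vector (and when $\mathbf{w}\geq 0$ they are Tollefson's quadrilateral matching equations outright), and the result follows from Theorem~\ref{t-qo-admissible} and Theorem~\ref{t-admissible}. I expect this geometric verification of $u_\ell'=d_\ell$ to be the only real obstacle; everything else is direct translation through the splitting map.
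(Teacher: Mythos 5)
Your proposal is correct and follows essentially the same route as the paper: handle the all-non-negative case via Theorem~\ref{t-admissible}, use the splitting $j_{i,t}\mapsto(q_{i,t},k_{i,t})$ to pass between $\R^{3n}$ and $\R^{6n}$, and reduce everything to Theorem~\ref{t-qo-admissible} once the identity $u_j'=d_j$, $d_j'=u_j$ from Definition~\ref{d-octquadnumber} converts each {\quadoct} matching equation into the corresponding quadrilateral matching equation in joint coordinates. You correctly isolate that identity as the only substantive verification, which is exactly the step the paper settles by inspecting Figure~\ref{fig-matchingquadoct}.
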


It is worth pointing out that we interpret the quadrilateral matching
equations and the quadrilateral constraints literally for any
$3n$-dimensional vector.  We do not try to ``reconstruct''
quadrilateral coordinates from $\mathbf{w}$, but instead we read
Definitions~\ref{d-matchingquad} and~\ref{d-quadconst} precisely as given.
In particular, the vector
\[ \mathbf{w}~=\,\left(\ j_{1,1},j_{1,2},j_{1,3}\ ;
    \ \ldots,j_{n,3}\ \right) \in \R^{3n} \]
is deemed to satisfy the quadrilateral constraints if at most one
of $j_{i,1}$, $j_{i,2}$ and $j_{i,3}$ is non-zero for any given $i$.
Likewise, $\mathbf{w}$ satisfies the quadrilateral matching equations
if for each non-boundary edge $e$ of $\tri$ we have
\[ j_{i_1,u_1} + j_{i_2,u_2} + \ldots + j_{i_t,u_t} =
   j_{i_1,d_1} + j_{i_2,d_2} + \ldots + j_{i_t,d_t} ,\]
where each $u_k$ is the number of an upward quadrilateral type meeting $e$
in the $i_k$th tetrahedron of $\tri$, and each $d_k$ is the number of a
downward quadrilateral type meeting $e$ in the $i_k$th tetrahedron of $\tri$.

{\renewcommand{\prooflabel}{Proof of Theorem~\ref{t-joint-admissible}}
\begin{proof}
    Normal surfaces (as opposed to almost normal surfaces) are easily
    dealt with.  Suppose that $S$ is some embedded normal surface in $\tri$.
    Then we have $\jrep{S}=\qrep{S}$, and it is clear from
    Theorem~\ref{t-admissible} that $\jrep{S}$ satisfies the four
    conditions given in this theorem, and that every coordinate of
    $\jrep{S}$ is non-negative.
    Conversely, suppose that
    some integer vector $\mathbf{w} \in \R^{3n}$ satisfies these four
    conditions, and that all of its coordinates are non-negative.
    Then $\mathbf{w}$ satisfies the conditions of
    Theorem~\ref{t-admissible}, whereupon it follows that $\mathbf{w}$
    is the quadrilateral vector representation---and therefore also the joint
    vector representation---of some embedded normal surface in $\tri$.

    We turn our attention now to the more interesting case of
    octagonal almost normal surfaces.  The key observation is the
    following.  Consider the {\quadoct} matching equation derived from
    some non-boundary edge $e$ of the triangulation, as described in
    Definition~\ref{d-qo-matching}, and let $\Delta$ be some
    tetrahedron containing $e$.  If we use the numbering scheme of
    Definition~\ref{d-octquadnumber}, then the $i$th quadrilateral type
    in $\Delta$ is an \emph{upward} quadrilateral if and only if
    the $i$th octagon type in $\Delta$ is a \emph{downward} octagon,
    and vice versa.  This is easily verified by examining
    Figure~\ref{fig-matchingquadoct}.

    Using this observation, we can reduce each {\quadoct} matching
    equation~(\ref{eqn-qo-matching}) to the following:
    \begin{equation} \label{eqn-joint-matching-quadoct}
       (q_{i_1,u_1} - k_{i_1,u_1}) + \ldots + (q_{i_1,u_t} - k_{i_1,u_t}) =
       (q_{i_1,d_1} - k_{i_1,d_1}) + \ldots + (q_{i_1,d_t} - k_{i_1,d_t}),
    \end{equation}
    where the coordinates
    $q_{i_1,u_1},q_{i_2,u_2},\ldots,q_{i_t,u_t}$ and
    $q_{i_1,d_1},q_{i_2,d_2},\ldots,q_{i_t,d_t}$
    correspond to the upward and downward quadrilaterals respectively
    about the edge $e$.  Translated into joint coordinates, this reduces
    further to
    \begin{equation} \label{eqn-joint-matching}
        j_{i_1,u_1} + \ldots + j_{i_1,u_t} =
        j_{i_1,d_1} + \ldots + j_{i_1,d_t},
    \end{equation}
    which is identical to the corresponding quadrilateral matching
    equation in $\R^{3n}$.

    We can now finish the proof of Theorem~\ref{t-joint-admissible}.
    Suppose that $S$ is some octagonal almost normal surface in $\tri$.
    Then the following observations follow immediately
    from Theorem~\ref{t-qo-admissible}:
    \begin{itemize}
        \item Precisely one octagonal coordinate in $\krep{S}$ is
        non-zero, and the corresponding quadrilateral coordinate in
        $\krep{S}$ must be zero as a result.  Therefore precisely one
        coordinate of $\jrep{S}$ is negative.
        \item The {\quadoct} vector representation $\krep{S}$ satisfies
        each {\quadoct} matching equation as described
        by~(\ref{eqn-joint-matching-quadoct}).  Therefore
        the joint vector representation $\jrep{S}$ satisfies each
        \emph{quadrilateral} matching equation, as described
        by~(\ref{eqn-joint-matching}).
        \item For each tetrahedron of $\tri$, at most one of the six
        corresponding quadrilateral and octagonal coordinates in $\krep{S}$
        is non-zero, and so at most one of the three corresponding
        joint coordinates in $\jrep{S}$ is non-zero.  Therefore $\jrep{S}$
        satisfies the quadrilateral constraints.
        \item The unique non-zero octagonal coordinate in $\krep{S}$ has
        value $+1$, and so the unique negative coordinate in $\jrep{S}$
        has value $-1$.
    \end{itemize}
    Therefore the joint vector representation $\jrep{S}$ satisfies all
    four conditions listed in the statement of this theorem.

    Conversely, suppose that some integer vector
    \[ \mathbf{w}~=\,\left(\ j_{1,1},j_{1,2},j_{1,3}\ ;
        \ \ldots,j_{n,3}\ \right) \in \R^{3n} \]
    satisfies all four conditions listed in this theorem statement, and that
    one of its coordinates is negative (recalling that the non-negative case
    was dealt with at the beginning of this proof).  We define the
    $6n$-dimensional vector
    \[ \mathbf{w}'~=\,
        \left(\ q_{1,1},q_{1,2},q_{1,3},\ k_{1,1},k_{1,2},k_{1,3}\ ;
        \ \ldots,k_{n,3}\ \right) \in \R^{6n}\]
    by setting
    \[
        q_{i,t} = \left\{ \begin{array}{l@{\quad}l}
            j_{i,t} & \mbox{if $j_{i,t} \geq 0$;} \\
            0 & \mbox{if $j_{i,t} < 0$,}
        \end{array}\right.
        \quad\mbox{and}\quad
        k_{i,t} = \left\{ \begin{array}{l@{\quad}l}
            0 & \mbox{if $j_{i,t} \geq 0$;} \\
            -j_{i,t} & \mbox{if $j_{i,t} < 0$.}
        \end{array}\right.
    \]
    By using the four conditions of this theorem statement and following
    the previous argument in reverse, it is simple to show that
    $\mathbf{w}'$ satisfies the conditions of Theorem~\ref{t-qo-admissible}.
    It follows then that
    $\mathbf{w}'$ is the {\quadoct} vector representation of some
    octagonal almost normal surface in $\tri$, and so $\mathbf{w}$
    is the joint vector representation of this same surface.
\end{proof}
} 

Because joint coordinates are $3n$-dimensional, we are able to visualise
them explicitly in $\R^3$ for a one-tetrahedron triangulation.  We do just
this in the following example to illustrate the various conditions of
Theorem~\ref{t-joint-admissible}.

\begin{example} \label{eg-sphere}
    Let $\trisphere$ be the following compact 3-manifold triangulation
    (which we will shortly prove represents the 3-sphere).
    We begin with the single tetrahedron $\mathit{ABCD}$, and identify faces
    $\mathit{ABC} \leftrightarrow \mathit{BCD}$ (with a twist) and
    $\mathit{ABD} \leftrightarrow \mathit{ACD}$ (folded directly over
    the common edge $\mathit{AD}$), as illustrated in
    Figure~\ref{fig-trisphere}.  The resulting
    triangulation has one tetrahedron, one vertex
    (since $A$, $B$, $C$ and $D$ are all identified), and two edges
    (where $\mathit{AB}$, $\mathit{BC}$, $\mathit{CD}$, $\mathit{BD}$
    and $\mathit{AC}$ are all identified, and $\mathit{AD}$ is left in
    a class of its own).

    \begin{figure}[htb]
    \centering
    \includegraphics{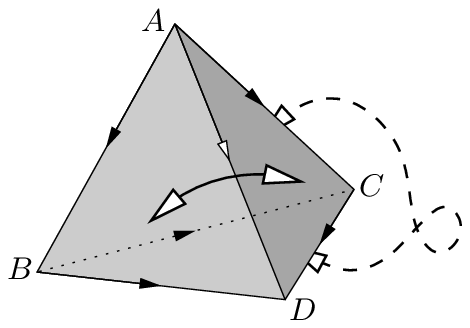}
    \caption{The one-tetrahedron triangulation $\trisphere$}
    \label{fig-trisphere}
    \end{figure}

    Let $\Delta$ represent the sole tetrahedron of $\trisphere$,
    and number the three quadrilateral types in $\Delta$ so that
    types 1, 2 and 3 separate the edge pairs
    ($\mathit{AB},\mathit{CD}$), ($\mathit{AC},\mathit{BD}$) and
    ($\mathit{AD},\mathit{BC}$) respectively.
    We find then that
    both joint matching equations for $\trisphere$ (one for each edge)
    reduce to the form $j_{1,1} = j_{1,2}$.

    We plot the resulting solution space in $\R^3$ in
    Figure~\ref{fig-solneg}.  In the top-left diagram, we shade the
    region in which $\mathbf{w}=(j_{1,1}, j_{1,2}, j_{1,3}) \in \R^3$ has
    at most one negative coordinate (for clarity, we restrict our shading
    to the interior of a sphere around the origin).
    In the top-right diagram, we shade the intersection of this region
    with the hyperplane $j_{1,1}=j_{1,2}$, which gives us the closed
    half-plane $H = \{(x,x,z)\,|\,x \geq 0\}$.  If we wish to enforce the
    quadrilateral constraints then we must restrict our attention to the
    three coordinate axes (where at most one coordinate is non-zero);
    the final intersection of $H$ with these three axes is plotted in the
    bottom-left diagram.  The resulting solution space is simply the
    entire $j_{1,3}$ axis, taken in both directions.

    \begin{figure}[htb]
    \centering
    \includegraphics[scale=0.9]{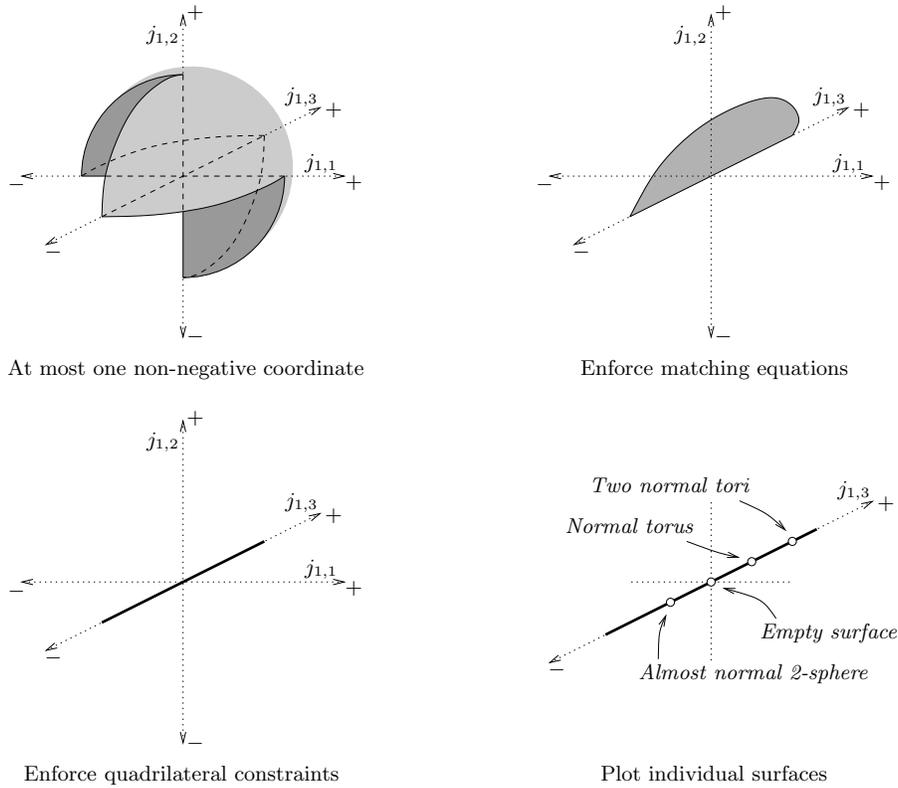}
    \caption{Building the solution space for the triangulation $\mathcal{S}$}
    \label{fig-solneg}
    \end{figure}

    From Theorem~\ref{t-joint-admissible} it follows that, if we ignore
    vertex linking components, then the normal and octagonal normal surfaces
    in $\trisphere$ correspond precisely to the integer points
    \[ \{(0,0,k)\,|\,k \in \Z,\ k \geq -1\}. \]
    With some further investigation we can classify these surfaces as
    follows:
    \begin{itemize}
        \item $(0,0,0)$ represents the empty surface;
        \item $(0,0,k)$ for $k \geq 1$ represents $k$ copies of the
        embedded normal torus surrounding the edge $\mathit{AD}$,
        which is referred to by Jaco and Rubinstein as a
        \emph{thin edge link} \cite{jaco03-0-efficiency};
        \item $(0,0,-1)$ is an octagonal almost normal 2-sphere.
    \end{itemize}
    These surfaces are individually plotted in the bottom-right diagram of
    Figure~\ref{fig-solneg}.

    To finish, we note that (i)~there are no normal 2-spheres (aside
    from the vertex link which we have ignored), and so $\trisphere$
    is a 0-efficient triangulation, and that (ii)~$\trisphere$ contains an
    octagonal almost normal 2-sphere.  Using Theorem~\ref{t-std-sphere}
    and noting that $\trisphere$ is orientable, it follows that $\trisphere$ is
    in fact a triangulation of the 3-sphere.
\end{example}

A natural question to ask at this point is what becomes of the
projective solution space in joint coordinates.  Recall that in other
coordinate systems, the non-negative orthant and the matching equations
intersect to give a convex polyhedral cone, and that the projective solution
space is a cross-section of this cone, taken by intersecting the cone with
the hyperplane $\sum x_i = 1$.

The difficulty we face with joint coordinates is that we no longer have
a convex polyhedral cone to work with.  Instead we begin with the union of
$3n+1$ orthants in $\R^{3n}$ (where at most one coordinate is
non-negative), which is not even a convex set.  Upon intersecting this
with the joint matching
equations, we obtain a set $\mathcal{P}$ with the following properties.
$\mathcal{P}$ is a cone in the sense that $\mathbf{x} \in \mathcal{P}$
implies that $\lambda \mathbf{x} \in \mathcal{P}$ for any $\lambda \geq 0$,
but like the union of orthants before it, $\mathcal{P}$ might not be
convex (although in Example~\ref{eg-sphere} it happens to be).
More importantly, $\mathcal{P}$ can contain diametrically opposite points
(such as $(0,0,\pm 1)$ in our example), and so in general we
cannot form a cross-section by slicing through $\mathcal{P}$ with a
hyperplane.

We could perhaps take a cross-section using the unit sphere, but this
would lift us out of the world of polytopes, making it difficult
to design algorithms.  Perhaps the simplest solution is
to take a cross section using the ``polyhedral unit sphere'' $\sum |x_i| = 1$,
as illustrated in Figure~\ref{fig-polysection}.
Continuing with Example~\ref{eg-sphere}, the left-hand diagram of
Figure~\ref{fig-polysection} shows the intersection
of our four original orthants with the ``sphere''
$|j_{1,1}| + |j_{1,2}| + |j_{1,3}| = 1$, and the right-hand
diagram shows the subsequent intersection with the matching equation
$j_{1,1} = j_{1,2}$.

\begin{figure}[htb]
\centering
\includegraphics[scale=0.9]{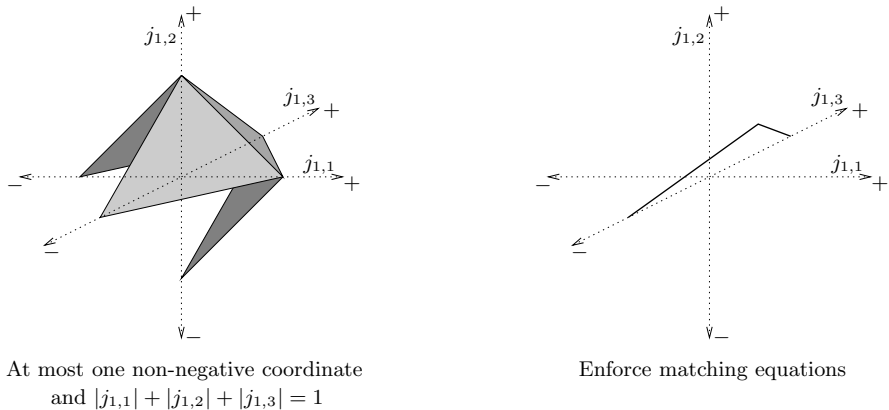}
\caption{Constructing the ``projective solution space'' in joint coordinates}
\label{fig-polysection}
\end{figure}

Although we are now able to define the projective solution space in a
sensible way, we do not obtain a single convex polytope as a result.
Instead we obtain
$3n+1$ distinct convex polytopes---one for each of the original
orthants---joined together along their boundaries.  To enumerate the vertices
of this structure would therefore require $3n+1$ distinct passes through
the vertex enumeration algorithm,\footnote{In fact only $3n$ passes are
required.  We can ignore the non-negative orthant, since it can be shown
that every vertex of the non-negative orthant is also a vertex of one
of the $3n$ adjacent ``almost non-negative'' orthants.}
bringing us back to the scheme of Casson that we discussed
in Section~\ref{s-enumeration-notes}.  It is worth noting again that
the polytope of this structure that sits within the non-negative orthant
is precisely Tollefson's quadrilateral projective solution space
for \emph{normal} surfaces.

It follows then that joint coordinates do not appear practical for use
in enumeration algorithms.  Nevertheless, they have appealing geometric
properties that may render them useful for other purposes:
\begin{itemize}
    \item They live in a remarkably small number of dimensions;
    \item They express the space of admissible vectors for octagonal
    almost normal surfaces as a natural geometric extension of
    Tollefson's space for normal surfaces, obtained simply by
    expanding our scope from the non-negative orthant to include
    the neighbouring ``almost non-negative'' orthants.
\end{itemize}

As an immediate application, these properties make joint coordinates
a useful tool for visualising the almost normal solution space.
More generally, they could perhaps open the way for new \emph{theoretical}
insights into the structure of the solution space.

To illustrate the latter point, we can draw analogies with Casson's edge
weight coordinates for normal surfaces, which are developed and
exploited in \cite{burton03-thesis}.  Like
joint coordinates, edge weight coordinates use very few dimensions and
are geometrically appealing, but a loss of convexity makes them
impractical for use in algorithms.
Nevertheless, their tight geometric structure has led to
new theoretical and combinatorial insights, and we hope that joint
coordinates can offer the same.

\bibliographystyle{amsplain}
\bibliography{pure}

\vspace{1cm}
\noindent
Benjamin A.~Burton \\
School of Mathematics and Physics, The University of Queensland \\
Brisbane QLD 4072, Australia \\
(bab@debian.org)

\end{document}